\date{October 30th, 2017}
\DeclareMathOperator{\End}{End}     
\DeclareMathOperator{\GCD}{GCD}     
\DeclareMathOperator{\Hom}{Hom}     
\DeclareMathOperator{\id}{id}       
\DeclareMathOperator{\Jac}{Jac}     
\DeclareMathOperator{\rk}{rk}       
\DeclareMathOperator{\Sym}{Sym}     
\DeclareMathOperator{\tr}{tr}       
\newcommand{\la}{\lambda}           
\newcommand{\Om}{\varOmega}         
\newcommand{\sg}{\sigma}            
\newcommand{\bC}{\mathbb{C}}        
\newcommand{\bN}{\mathbb{N}}        
\newcommand{\bR}{\mathbb{R}}        
\newcommand{\bS}{\mathbb{S}}        
\newcommand{\bZ}{\mathbb{Z}}        
\newcommand{\gG}{\mathcal{G}}       
\newcommand{\sJ}{\mathcal{J}}	    
\newcommand{\sM}{\mathcal{M}}       
\newcommand{\sN}{\mathcal{N}}       
\newcommand{\sO}{\mathcal{O}}       
\newcommand{\dd}{\mathbf{d}}        
\newcommand{\rr}{\mathbf{r}}        
\renewcommand{\geq}{\geqslant}      
\renewcommand{\leq}{\leqslant}      
\newcommand{\ox}{\otimes}           
\renewcommand{\:}{\colon}           
\def\Circlearrowright{\ensuremath{%
  \rotatebox[origin=c]{180}{$\circlearrowright$}}}
\newcommand{\word}[1]{\quad\text{#1}\quad} 
\theoremstyle{plain}
\newtheorem{Th}{Theorem}[section]   
\newtheorem*{nonum-Th}{Theorem}     
\newtheorem{Prop}[Th]{Proposition}  
\newtheorem{Lem}[Th]{Lemma}         
\newtheorem{Cor}[Th]{Corollary}     
\newtheorem*{nonum-Cor}{Corollary}  
\theoremstyle{definition}
\newtheorem{Def}[Th]{Definition}    
\newtheorem{assumption}[Th]{Assumption}  
\theoremstyle{remark}
\newtheorem{Rmk}[Th]{Remark}        
\numberwithin{equation}{section}
\DeclareRobustCommand{\QEDA}{\ifmmode
  \else \leavevmode\unskip\penalty9999 \hbox{}\nobreak\hfill \fi
  \quad\hbox{\qedasymbol}}
\newcommand{\qedasymbol}{$\boxminus$} 
\renewcommand{\section}{\@startsection{section}{1}{\z@}%
                        {-3.5ex \@plus -1ex \@minus -.2ex}%
                        {2.3ex \@plus.2ex}%
                        {\normalfont\large\bfseries}}
\renewcommand{\subsection}{\@startsection{subsection}{2}{\z@}%
                        {-3.25ex \@plus -1ex \@minus -.2ex}%
                        {1.5ex \@plus .2ex}%
                        {\normalfont\normalsize\bfseries}}
\renewcommand{\subsubsection}{\@startsection{subsubsection}{3}{\z@}%
                        {-3.25ex \@plus -1ex \@minus -.2ex}%
                        {1.5ex \@plus .2ex}%
                        {\normalfont\normalsize\itshape}}
\renewcommand{\@dotsep}{200} 
\begin{document}

\thispagestyle{empty}

\begin{center}
\Large
\textsc{Stabilization of the Homotopy Groups of\\
the Moduli Spaces of $k$-Higgs Bundles}\\

\bigskip
\normalsize
October 30th, 2017\\
  
\bigskip
\emph{Ronald A. Z\'u\~niga-Rojas}\footnote{\scriptsize Supported by Universidad de Costa Rica through CIMM
  (Centro de Investigaciones Matem\'aticas y Metamatem\'aticas), 
  Project 820-B5-202. This work is based on the Ph.D. Project \cite{z-r} called
  ``Homotopy Groups of the Moduli Space of Higgs Bundles'', supported
  by FEDER through Programa Operacional Factores de Competitividade-COMPETE,
  and also supported by FCT (Funda\c{c}\~ao para a Ci\^encia e a Tecnologia)
  through the projects PTDC/MAT-GEO/0675/2012 and PEst-C/MAT/UI0144/2013 
  with grant reference SFRH/BD/51174/2010.}\\[6pt]
\small Centro de Investigaciones Matem\'aticas y Metamatem\'aticas CIMM\\
\small Universidad de Costa Rica UCR\\
\small San Jos\'e 11501, Costa Rica\\
\small e-mail: \texttt{ronald.zunigarojas@ucr.ac.cr}
\end{center}

\noindent
\textbf{Abstract.}
The work of Hausel proves that the Bia\l{}ynicki-Birula stratification 
of the moduli space of rank two Higgs bundles coincides with its 
Shatz stratification. He uses that to estimate some homotopy groups 
of the moduli spaces of $k$-Higgs bundles of rank two. Unfortunately, 
those two stratifications do not coincide in general. Here, the 
objective is to present a different proof of the stabilization of the 
homotopy groups of $\sM^k(2,d)$, and generalize it to $\sM^k(3,d)$, 
the moduli spaces of $k$-Higgs bundles of degree~$d$, and ranks two 
and three respectively, over a compact Riemann surface $X$, using the 
results from the works of Hausel and Thaddeus, among other tools.

\medskip

\begin{flushleft}
\small
\noindent
\textbf{Keywords}: 
Moduli of Higgs Bundles, Variations of Hodge Structures, Vector Bundles.

\noindent
\textbf{AMS 2010 MSC classes:} Primary \texttt{55Q52}; Secondaries \texttt{14H60}, \texttt{14D07}.
\end{flushleft}

\section*{Introduction}
\addcontentsline{toc}{section}{Introduction}
\label{sec:0} 

\quad In this work, we estimate some homotopy groups of the moduli spaces of
$k$-Higgs bundles $\sM^k(r,d)$\ over a compact Riemann surface $X$ of
genus $g > 2$. This space was first introduced by
Hitchin~\cite{hit2}; and then, it was worked by Hausel~\cite{hau},
where he estimated some of the homotopy groups working the particular
case of rank two, and denoting 
$\displaystyle
\sM^{\infty} = \lim_{k\to \infty}\sM^k
$ 
as the direct limit of the sequence. 

The co-prime condition 
$\GCD(r,d) = 1$ implies that $\sM^k(r,d)$ 
is smooth. We shall do the estimate with Higgs bundles of 
fixed determinant $\det(E) = \Lambda \in \sJ^d$, where $\sJ^d$ 
is the Jacobian of degree $d$ line bundles on $X$, to ensure 
that $\sN(r,d)$ and $\sM(r,d)$ are simply connected. Denote 
$\sM^k_{\Lambda}$ as the moduli space of $k$-Higgs bundles with 
determinant $\Lambda$, and 
$\displaystyle
\sM^{\infty}_{\Lambda} = \lim_{k\to \infty}\sM^k_{\Lambda}
$
as the direct limit of these moduli spaces, as before. Hence, the group action 
$\pi_1(\sM^k_\Lambda) \Circlearrowright \pi_n(\sM^\infty_\Lambda,\sM^k_\Lambda)$ 
will be trivial.

Hausel \cite{hau} estimates the homotopy groups $\pi_n(\sM^k(2,1))$
using two main tools: first the coincidence mentioned before between
the Bia\l{}ynicki-Birula stratification and the Shatz stratification; and
second, the well-behaved embeddings
$\sM^k(2,1) \hookrightarrow \sM^{k+1}(2,1)$. These inclusions are also
well-behaved in general for $\GCD(r,d) = 1$; nevertheless, those two
stratifications above mentioned do not coincide in general (see for
instance~\cite{gzr}).

In this paper, our estimate is based on the embeddings
$$
\sM^k(r,d) \hookrightarrow \sM^{k+1}(r,d)
$$ 
and their good behavior,
notwithstanding the non-coincidence between stratifications when the 
rank is $r = 3$. The paper is organized as follows: in section~\ref{sec:1} 
we recall some facts about vector bundles and Higgs bundles; in 
section~\ref{sec:2}, we present the cohomology ring $H^n(\sM^k)$; in 
section~\ref{sec:3}, we discuss the most relevant results about the cohomology 
and the homotopy of the moduli spaces $\sM^k$; finally, in section~\ref{sec:4}, 
subsection~\ref{ssec:4.1} we estimate the homotopy groups 
of $\sM^k$ under the assumption that $\pi_1(\sM^k)$ acts trivially on 
$\pi_n(\sM^\infty,\sM^k)$, and hence, in subsection~\ref{ssec:4.2} we
present and prove the main result:

\begin{nonum-Th}
(Corollary \ref{[Z-R]MainResult2FixedDet})
Suppose the rank is either $r = 2$ or $r = 3$, and $\GCD(r,d) = 1$. 
Then, for all $n$ exists $k_0$, depending on $n$, such that 
 $$
 \pi_j\left(\sM_{\Lambda}^{k}(r,d)\right) \xrightarrow{\quad \cong \quad} \pi_j\left(\sM_{\Lambda}^\infty(r,d)\right)
 $$
 for all $k \geq k_0$ and for all $j \leq n-1$.
\end{nonum-Th}

\section{Preliminary definitions} 
\label{sec:1}

Let $X$ be a compact Riemann surface of genus $g > 2$ and let
$K = T^*X$ be the canonical line bundle of $X$. Note that,
algebraically, $X$ is also a nonsingular complex projective algebraic
curve.

\begin{Def} 
 A \emph{Higgs bundle} over $X$ is a pair $(E, \Phi)$ where 
 $E \to X$ is a holomorphic vector bundle and $\Phi\: E \to E \ox K$
 is an endomorphism of $E$ twisted by~$K$, which is called a 
 \emph{Higgs field}. Note that $\Phi \in H^0(X;\End(E) \ox K)$.
\end{Def}

\begin{Def} 
 For a vector bundle $E \to X$, we denote the \emph{rank} of~$E$ by
 $\rk(E) = r$ and the \emph{degree} of $E$ by $\deg(E) = d$. Then,
 for any smooth bundle $E \to X$ the \emph{slope} is defined to be
  \begin{equation}
    \mu(E) := \frac{\deg(E)}{\rk(E)} = \frac{d}{r}.
    \label{slope} 
  \end{equation} 
 A vector bundle $E \to X$ is called \emph{semistable} if 
 $\mu(F) \leq \mu(E)$ for any $F$ such that
 $0 \subsetneq F \subseteq E$. Similarly, a vector bundle $E \to X$ is
 called \emph{stable} if $\mu(F) < \mu(E)$ for any nonzero proper
 subbundle $0 \subsetneq F \subsetneq E$. Finally, $E$ is called 
 \emph{polystable} if it is the direct sum of stable subbundles,
 all of the same slope.
\end{Def}

\begin{Def} 
 A subbundle $F \subset E$ is said to be \emph{$\Phi$-invariant} 
 if $\Phi(F) \subset F \ox K$. A Higgs bundle is said to be 
 \emph{semistable} [respectively, \emph{stable}] if 
 $\mu(F) \leq \mu(E)$ [resp., $\mu(F) < \mu(E)$] for 
 any nonzero $\Phi$-invariant subbundle $F \subseteq E$
 [resp., $F \subsetneq E$]. Finally, $(E,\Phi)$ is called
 \emph{polystable} if it is the direct sum of stable
 $\Phi$-invariant subbundles, all of the same slope.
\end{Def}

Fixing the rank $\rk(E) = r$ and the degree $\deg(E) = d$ of a Higgs
bundle $(E,\Phi)$, the isomorphism classes of polystable bundles are
parametrized by a quasi-projective variety: the moduli space
$\sM(r,d)$. Constructions of this space can be found in the work of
Hitchin~\cite{hit2}, using gauge theory, or in the work of
Nitsure~\cite{nit}, using algebraic geometry methods.

An important feature of $\sM(r,d)$ is that it carries an action
of~$\bC^*$: $z \cdot (E, \Phi) = (E, z \cdot \Phi)$. According to
Hitchin~\cite{hit2}, $(\sM,I,\Om)$ is a K\"ahler manifold, where
$I$ is its complex structure and $\Om$ its corresponding
K\"ahler form. Furthermore, $\bC^*$ acts on $\sM$ biholomorphically
with respect to the complex structure $I$ by the action mentioned
above, where the K\"ahler form $\Om$ is invariant under the
induced action
$e^{i\theta} \cdot (E, \Phi) = (E, e^{i\theta} \cdot \Phi)$ of the
circle $\bS^1 \subset \bC^*$. Besides, this circle action is
Hamiltonian, with proper momentum map
$f \: \sM \to \bR$ defined by:

\begin{equation}
f(E, \Phi) = \frac{1}{2\pi} \|\Phi\|_{L^2}^2
= \frac{i}{2\pi} \int_X \tr(\Phi \Phi^*),
\label{momentum_map_intro} 
\end{equation}
where $\Phi^*$ is the adjoint of $\Phi$ with respect to the hermitian
metric on~$E$ which provides the Hitchin-Kobayashi correspondence 
(see Hitchin \cite{hit2}), and $f$ has finitely many critical values.

There is another important fact mentioned by Hitchin (see the original
version in Frankel~\cite{fra}, and its application to Higgs bundles in
Hitchin~\cite{hit2}): the critical points of $f$ are exactly the fixed
points of the circle action on~$\sM$.

If $(E, \Phi) = (E, e^{i\theta}\Phi)$ then $\Phi = 0$ with critical
value $c_0 = 0$. The corresponding critical submanifold is
$F_0 = f^{-1}(c_{0}) = f^{-1}(0) = \sN$, the moduli space of semistable
bundles. On the other hand, when $\Phi \neq 0$, there is a type of
algebraic structure for Higgs bundles introduced by
Simpson~\cite{sim1}: a \emph{variation of Hodge structure}, or
simply a \emph{VHS}, for a Higgs bundle $(E, \Phi)$ is a
decomposition:
\begin{equation}
E = \bigoplus_{j=1}^n E_j 
\word{such that} 
\Phi \: E_j \to E_{j+1} \ox K 
\word{for} j \leq n - 1
\word{and} \Phi(E_n) = 0.
\label{VHS} 
\end{equation} 

It has been proved by Simpson~\cite{sim2} that the fixed points of the
circle action on $\sM(r,d)$, and so, the critical points of $f$, are
these VHS, where the critical values $c_\la = f(E,\Phi)$ will depend 
on the degrees $d_j$ of the components $E_j \subset E$. By Morse theory, 
we can stratify $\sM$ in such a way that there is a nonzero critical 
submanifold $F_\la := f^{-1}(c_\la)$ for each nonzero critical value
$0 \neq c_\la = f(E,\Phi)$ where $(E,\Phi)$ represents a fixed point
of the circle action, or equivalently, a VHS. We then say that
$(E,\Phi)$ is an $(r_1,\dots,r_n)$-VHS, where $r_j = \rk(E_j)\ \forall j$.

\begin{Def} 
  A \emph{holomorphic triple} on $X$ is a triple
  $T = (E_1, E_2, \phi)$ consisting of two holomorphic vector bundles
  $E_1 \to X$ and $E_2 \to X$ and a homomorphism $\phi \: E_2 \to E_1$,
  i.e., an element $\phi \in H^0(\Hom(E_2,E_1))$.
\end{Def}

There are certain notions of $\sg$-degree:
  $$
  \deg_\sg(T) := \deg(E_1) + \deg(E_2) + \sg \cdot \rk(E_2),
  $$
and $\sg$-slope:
  $$
  \mu_\sg(T) := \frac{\deg_\sg(T)}{\rk(E_1) + \rk(E_2)}
  $$
which give rise to notions of $\sg$-stability of triples. The reader 
may consult the works of Bradlow and Garc\'ia-Prada~\cite{brgp}; 
Bradlow, Garc\'ia-Prada and Gothen~\cite{bgg1}; and Mu\~noz, Ortega 
and V\'azquez-Gallo~\cite{mov} for the details.

With this notions, one can construct:
 $$
 \sN_\sg = \sN_\sg(\rr,\dd) = \sN_\sg(r_1,r_2,d_1,d_2)
 $$
 the moduli space of $\sg$-polystable triples $T = (E_1,E_2,\phi)$ 
 such that $\rk(E_j) = r_j$ and $\deg(E_j) = d_j$,
 and 
 $$
 \sN^s_\sg = \sN^s_\sg(\rr,\dd)
 $$ 
 the moduli space of $\sg$-stable triples, where 
 $(\rr,\dd) = (r_1,r_2,d_1,d_2)$ is the type of the triple
 $T = (E_1,E_2,\phi)$.

We mention the moduli space $\sN_\sg(r_1,r_2,d_1,d_2)$ of $\sg$-stable
triples because they are closely related to some of the 
critical submanifolds~$F_\la$.

\begin{Def} 
  Fix a point $p \in X$, and let $\sO_X(p)$ be the associated
  line bundle to the divisor $p \in \Sym^1(X) = X$. A
  \emph{$k$-Higgs bundle} (or \emph{Higgs bundle with poles of
  order~$k$}) is a pair $(E,\Phi^k)$ where:
  $$
  E \xrightarrow{\ \Phi^k\ } E \ox K \ox \sO_X(kp) = E \ox K(kp)
  $$
  and where the morphism 
  $\Phi^k \in H^0\big(X, \End(E) \ox K(kp)\big)$ is what we call
  a \emph{Higgs field with poles of order~$k$}. The moduli space
  of $k$-Higgs bundles of rank~$r$ and degree~$d$ is denoted by
  $\sM^k(r,d)$. For simplicity, we will suppose that $\GCD(r,d) = 1$,
  and so, $\sM^k(r,d)$ will be smooth.
\end{Def}

There is an embedding
$$
i_k\: \sM^k(r,d) \to \sM^{k+1}(r,d) 
$$
$$
\big[(E,\Phi^k)\big] \longmapsto \big[(E,\Phi^k \ox s_p)\big]
$$
where $0 \neq s_p \in H^0(X, \sO_X(p))$ is a nonzero fixed section of~$\sO_X(p)$.

All the results mentioned for $\sM(r,d)$, hold also for $\sM^k(r,d)$.

\section{Generators for the Cohomology Ring} 
\label{sec:2}

According to {Hausel and Thaddeus \cite[(4.4)]{hath1}}, there is a universal 
family $(\mathbb{E}^k, \boldsymbol{\Phi}^k)$ over $X \times \sM^k$ where
$$
\left\{
  \begin{array}{r c l}
    \mathbb{E}^k & \to & X \times \sM^k(r,d)\\
    \boldsymbol{\Phi}^k & \in & H^0\big(\End(\mathbb{E}^k) \ox \pi_2^*(K(kp)) \big)
  \end{array}
\right.
$$
and from now on, we will refer $(\mathbb{E}^k,\boldsymbol{\Phi}^k)$ 
as a \emph{universal $k$-Higgs bundle}. Note that 
$(\mathbb{E}^k, \boldsymbol{\Phi}^k)$ satisfies the \emph{Universal Property}: 
in general, for any family $(\mathbb{F}^k, \boldsymbol{\Psi}^k)$ over $X \times M$, 
there is a morphism $\eta\: M \to \sM^k$ such that 
$
(\mathrm{Id}_X \times \eta)^*(\mathbb{E}^k, \boldsymbol{\Phi}^k) 
= (\mathbb{F}^k, \boldsymbol{\Psi}^k)
$. 
It means that, for $M = \sM^k$ whenever exists $(\mathbb{F}^k, \boldsymbol{\Psi}^k)$ 
such that
$$
(\mathbb{E}^k, \boldsymbol{\Phi}^k)_P \cong 
(\mathbb{F}^k, \boldsymbol{\Psi}^k)_P \quad 
\forall P = (E,\Phi^k) \in \sM^k(r,d),
$$ 
then, there exists a unique bundle morphism $\xi\: \mathbb{F}^k \to \mathbb{E}^k$ such that
\begin{align}
 \begin{xy}
  (0,25)*+{\mathbb{F}^k}="a";
  (30,25)*+{\mathbb{E}^k}="b";
  (15,0)*+{X \times \sM^k(r,d)}="c";
  {\ar@{-->}^{\exists ! \xi} "a";"b"};
  {\ar@{->}^{p_1} "b";"c"};
  {\ar@{->}_{p_2} "a";"c"};
 \end{xy}
\end{align}
commutes: $p_2 = p_1 \circ \xi$.

The universal bundle extends then to the following: if $(\mathbb{E}^k,\boldsymbol{\Phi}^k)$ and 
$(\mathbb{F}^k,\boldsymbol{\Psi}^k)$ are families of stable $k$-Higgs bundles parametrized by 
$\sM^k(r,d)$, such that $(\mathbb{E}^k, \boldsymbol{\Phi}^k)_P \cong (\mathbb{F}^k, \boldsymbol{\Psi}^k)_P$ 
for all $P = (E,\Phi^k) \in \sM^k(r,d)$, then there is a line bundle
$\mathcal{L} \to \sM^k(r,d)$ such that
$$
(\mathbb{E}^k,\boldsymbol{\Phi}^k) \cong (\mathbb{F}^k \ox \pi_2^*(\mathcal{L}),\boldsymbol{\Psi}^k \ox \pi_2),
$$
where $\pi_2\: X \times \sM^k(r,d) \to \sM^k(r,d)$ is the natural projection and the endomorphisms
satisfy $\boldsymbol{\Phi}^k \cong \boldsymbol{\Psi}^k \ox \pi_2(\sg_P) \cong \boldsymbol{\Psi}^k$,
where $\sg_P$ is a section of $X\times \sM^k \to \sM^k$. For more details, see 
{Hausel and Thaddeus \cite[(4.2)]{hath1}}.

\begin{Rmk}
 Do not confuse $\pi_2$ with $p_2$ (neither $\pi_1$ with $p_1$); $\pi_j$ are the natural projections 
 of the cartesian product, while $p_j$ are the bundle surjective maps:
 \begin{align}
 \begin{xy}
  (0,25)*+{\mathbb{E}^k}="a";
  (30,25)*+{\mathbb{F}^k}="b";
  (10,5)*+{}="c11";
  (20,5)*+{}="c12";
  (15,0)*+{X \times \sM^k(r,d)}="c";
  (10,-5)*+{}="c21";
  (20,-5)*+{}="c22";
  (0,-25)*+{X}="d";
  (30,-25)*+{\sM^k(r,d)}="e";
  {\ar@{->}_{p_1} "a";"c11"};
  {\ar@{->}^{p_2} "b";"c12"};
  {\ar@{->}_{\pi_1} "c21";"d"};
  {\ar@{->}^{\pi_2} "c22";"e"};
 \end{xy}
\end{align}
\end{Rmk}

If we consider the embedding $i_k\: \sM^k(r,d) \to \sM^{k+1}(r,d)$ for general rank, we get that:

\begin{Prop}
\label{UniversalHiggsBundleEmbedding}
  Let $(\mathbb{E}^k,\boldsymbol{\Phi}^k)$ be a universal Higgs bundle. Then:
 $$
 (\mathrm{Id}_X \times i_k)^*(\mathbb{E}^{k+1}) \cong \mathbb{E}^k.
 $$
\end{Prop}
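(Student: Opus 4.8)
The plan is to exploit the Universal Property of the universal $k{+}1$-Higgs bundle together with the explicit description of the embedding $i_k$. First I would observe that the embedding $i_k$ is induced, via the Universal Property, by a concrete family over $X \times \sM^k(r,d)$: namely, starting from the universal $k$-Higgs bundle $(\mathbb{E}^k, \boldsymbol{\Phi}^k)$ on $X \times \sM^k(r,d)$, one forms the family $(\mathbb{E}^k, \boldsymbol{\Phi}^k \ox \pi_2^*s_p)$ — more precisely, $\boldsymbol{\Phi}^k$ composed with the section $s_p \in H^0(X,\sO_X(p))$ pulled back to $X \times \sM^k(r,d)$ — which is a family of stable $(k{+}1)$-Higgs bundles of rank $r$ and degree $d$ parametrized by $\sM^k(r,d)$, whose fibre over $P = (E,\Phi^k)$ is exactly $(E, \Phi^k \ox s_p) = i_k(P)$. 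By the Universal Property applied to $M = \sM^k(r,d)$ and this family, the classifying map is precisely $i_k$, and hence $(\mathrm{Id}_X \times i_k)^*(\mathbb{E}^{k+1}, \boldsymbol{\Phi}^{k+1}) \cong (\mathbb{E}^k, \boldsymbol{\Phi}^k \ox \pi_2^*s_p)$ as families. Projecting to the underlying bundle (forgetting the Higgs field) gives $(\mathrm{Id}_X \times i_k)^*(\mathbb{E}^{k+1}) \cong \mathbb{E}^k$, which is the claim.

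The key steps, in order, are: (i) verify that $(\mathbb{E}^k, \boldsymbol{\Phi}^k \ox \pi_2^*s_p)$ is indeed a \emph{family of $(k{+}1)$-Higgs bundles}, i.e. that $\boldsymbol{\Phi}^k \ox \pi_2^*s_p$ lands in $H^0\big(\End(\mathbb{E}^k)\ox \pi_2^*(K((k{+}1)p))\big)$ — this is immediate since $\boldsymbol{\Phi}^k \in H^0\big(\End(\mathbb{E}^k)\ox\pi_2^*(K(kp))\big)$ and $K(kp)\ox\sO_X(p) = K((k{+}1)p)$; (ii) check fibrewise that this family represents $i_k$, which is just the definition of $i_k$; (iii) invoke the Universal Property of $(\mathbb{E}^{k+1}, \boldsymbol{\Phi}^{k+1})$ to obtain the classifying morphism $\eta\: \sM^k(r,d)\to\sM^{k+1}(r,d)$ with $(\mathrm{Id}_X\times\eta)^*(\mathbb{E}^{k+1},\boldsymbol{\Phi}^{k+1}) \cong (\mathbb{E}^k, \boldsymbol{\Phi}^k\ox\pi_2^*s_p)$; (iv) identify $\eta = i_k$ (both send $P$ to $[(E,\Phi^k\ox s_p)]$, and a morphism to a fine moduli space is determined by its effect on points); (v) discard the Higgs-field data to conclude $(\mathrm{Id}_X\times i_k)^*\mathbb{E}^{k+1}\cong\mathbb{E}^k$.

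The main subtlety — not really an obstacle but the point requiring care — is the normalization ambiguity of the universal family: as recalled in the excerpt following Hausel–Thaddeus \cite[(4.2)]{hath1}, a universal family is only unique up to twisting by a line bundle $\mathcal{L}$ pulled back from $\sM^k(r,d)$. So strictly one should say that $(\mathrm{Id}_X\times i_k)^*\mathbb{E}^{k+1}$ and $\mathbb{E}^k$ agree up to such a twist, or fix once and for all a normalization of the universal families that makes the isomorphism hold on the nose; in either case this does not affect any cohomological consequence since $\pi_2^*\mathcal{L}$ restricts trivially to each slice $X\times\{P\}$ and contributes only to classes pulled back from the base. I would state the proposition and carry out the proof with a fixed compatible choice of normalizations, remarking that the ambiguity is the standard one and harmless for the applications to the cohomology ring in the next section.
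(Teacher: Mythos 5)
Your argument is essentially the paper's own proof: both form the family $\big(\mathbb{E}^k,\boldsymbol{\Phi}^k\ox s_p\big)$ of $(k{+}1)$-Higgs bundles over $X\times\sM^k(r,d)$ and invoke the universal property of $(\mathbb{E}^{k+1},\boldsymbol{\Phi}^{k+1})$ to identify its classifying map with $i_k$; your extra remark on the twist-by-$\mathcal{L}$ normalization ambiguity is a reasonable refinement the paper leaves implicit. The only slip is notational: $s_p\in H^0(X,\sO_X(p))$ must be pulled back via $\pi_1\:X\times\sM^k\to X$, not $\pi_2$.
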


\begin{proof}
Note that
$$\big(\mathbb{E}^k,\boldsymbol{\Phi}^k  \ox \pi_{1}^*(s_p)\big) \to X \times \sM^k$$
is a family of $(k+1)$-Higgs bundles on $X$, where $\pi_1\: X \times \sM^k \to X$ is the natural projection. 
So, by the universal property:
$$
\big(\mathbb{E}^k,\boldsymbol{\Phi}^k  \ox \pi_{1}^*(s_p)\big)
=
(\mathrm{Id}_X \times i_k)^*\big(\mathbb{E}^{k+1},\boldsymbol{\Phi}^{k+1}\big).
$$
\end{proof}

Consider
$$
\mathrm{Vect}(X) := \Big\{V\to X\: V\ \textmd{is a top. vector bundle} \Big\}\Big/ \cong
$$
the set of equivalence classes of topological vector bundles taken by isomorphism between 
them. Define the operation 
$$
[V]\oplus [W]\:=[V\oplus W]
$$
and consider the abelian semi-group $\big(\mathrm{Vect}(X),\oplus \big)$. Denote by
$$
K^0(X) = K\big(\mathrm{Vect}(X)\big) := \Big\{[V] - [W] \Big\}\Big/ \sim
$$
the abelian $K$-group of topological vector bundles on $X$, where
$$
[V] - [W] \sim 
[V\oplus U] - [W\oplus U]
$$
for every topological vector bundle $U\to X$.

Let $K^1(X)$ be the odd $K$-group of $X$ and let 
$$
K^*(X) = K^0(X)\oplus K^1(X)
$$
be the $K$-ring described by Atiyah \cite[Chapter II]{ati1}. 

In this case, $K^*(X)$ is torsion free since the Riemann surface $X$ is also a 
projective algebraic variety. Then, as a consequence of the K\"unneth Theorem 
(see for instance Atiyah \cite[Corollary~2.7.15.]{ati1} or \cite[Main Theorem]{ati2}), 
there is an isomorphism:

\begin{align}
 \begin{xy}
  (0,25)*+{\big(K^0(X)\ox K^0(\sM^k)\big) \oplus \big(K^1(X)\ox K^1(\sM^k)\big)}="a";
  (0,0)*+{K^0(X\times \sM^k)}="b";
  {\ar@{->}_{\cong} "a";"b"};
 \end{xy}
\end{align}

The reader may see Markman \cite{mar2} for the details. Furthermore, Markman \cite{mar2} 
chooses bases
$\{ x_1,\ ...,\ x_{2g} \} \subset K^1(X),$ and $\{ x_{2g+1},\ x_{2g+2} \} \subset K^0(X)$ 
to get a total basis
$$
\{ x_1,\ ...,\ x_{2g},\ x_{2g+1},\ x_{2g+2} \} \subset K^*(X) = K^0(X)\oplus K^1(X),
$$
and, since there is a universal bundle $\mathbb{E}^k \to X \times \sM^k$, 
we get the K\"unneth decomposition:
$$
[\mathbb{E}^k] = \sum_{j=0}^{2g}x_j  \ox e_j^k
$$
where $x_0\in K^0(X) = \mathrm{span}\{x_{2g+1},\ x_{2g+2}\}$,
$e_0^k\in K^0(\sM^k)$, $x_j\in K^1(X)$, and
$e_j^k\in K^1(\sM^k)$ for $j = 1,\ \dots,\ 2g$.
Finally, Markman \cite{mar2} considers the Chern classes $c_j(e_i^k)\in H^{2j}(\sM^k,\bZ)$ for 
$e_i^k\in K^*(\sM^k)$ and proves that:

\begin{Th}[{Markman \cite[Th.~3]{mar2}}]
\label{[Mar](2)}
 The cohomology ring $H^*\big(\sM^{k}(r,d),\bZ\big)$ is generated by the Chern classes of the 
 K\"unneth factors of the universal vector bundle.\QEDA
\end{Th}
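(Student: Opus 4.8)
The plan is to reduce the statement to Markman's theorem on the cohomology of moduli of sheaves, via the following chain of reductions. First I would recall that, because $\GCD(r,d)=1$, the moduli space $\sM^k(r,d)$ is a smooth quasi-projective variety carrying a genuine universal $k$-Higgs bundle $(\mathbb{E}^k,\boldsymbol{\Phi}^k)$ over $X\times\sM^k$; the existence of this universal family (not merely a projective one) is what makes the Künneth decomposition $[\mathbb{E}^k]=\sum_{j=0}^{2g}x_j\ox e_j^k$ meaningful in $K^0(X\times\sM^k)$. The key geometric input is the Hitchin correspondence / spectral construction: a $k$-Higgs bundle $(E,\Phi^k)$ with $\Phi^k\in H^0(\End(E)\ox K(kp))$ is the same datum as a pure-dimension-one torsion-free sheaf on the total space of the line bundle $K(kp)$ supported on the spectral curve, so $\sM^k(r,d)$ is identified with an open subset of a moduli space of stable sheaves $M_{X'}$ on the (smooth, projective) surface $X' = |K(kp)|$, or more precisely on a suitable compactification of the total space. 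This is exactly the setting in which Markman's \cite{mar2} Theorem~3 is stated: for moduli of sheaves on a surface with a universal sheaf, $H^*$ is generated by the Chern classes of the Künneth components of that universal sheaf.

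Next I would transport the generation statement across this identification. The universal sheaf on $X'\times M_{X'}$ restricts (via the spectral correspondence, pushing forward along the finite spectral cover) to the universal $k$-Higgs bundle $\mathbb{E}^k$ on $X\times\sM^k$, up to the usual twist by a line bundle pulled back from $\sM^k$; that twist ambiguity does not affect which subring of $H^*(\sM^k)$ is generated, since tensoring $\mathbb{E}^k$ by $\pi_2^*\mathcal{L}$ only modifies the $e_j^k$ by universally-expressible combinations involving $c_1(\mathcal{L})$, which is itself one of the generators. Concretely, the Chern character of $\mathbb{E}^k$ and the Chern character of the universal sheaf determine one another through the Grothendieck–Riemann–Roch formula for the spectral cover, so the two systems of Künneth-component Chern classes generate the same subring. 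Hence generation of $H^*(M_{X'})$ by the sheaf-theoretic generators passes to generation of $H^*(\sM^k)$ by the Chern classes $c_j(e_i^k)$ of the Künneth factors of $\mathbb{E}^k$.

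The remaining point is that $\sM^k(r,d)$ is only an \emph{open} piece of the projective moduli space $M_{X'}$, so a priori one only gets that the \emph{restriction map} $H^*(M_{X'})\to H^*(\sM^k)$ has image in the subring generated by the $c_j(e_i^k)$. To upgrade this to surjectivity of that subring onto all of $H^*(\sM^k)$ I would invoke that, in this Higgs-bundle situation, the relevant restriction map is in fact surjective: the complement of $\sM^k$ inside the compactified moduli space has high enough codimension (equivalently, one uses the perfection of the Morse stratification by the Hitchin map / nilpotent cone, as in Hitchin and in Hausel's work) that no new cohomology is lost. I expect this last step — controlling the complement and justifying surjectivity of restriction — to be the main technical obstacle; everything else is the formal machinery of the spectral correspondence together with a direct citation of Markman's surface case. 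An alternative, cleaner route that avoids the compactification entirely is to quote Markman's argument directly in the Higgs setting: it proceeds by showing that the $\bC^*$-equivariant (or Atiyah–Bott style) generators of $H^*(\sM^k)$ are themselves Künneth-component Chern classes of $\mathbb{E}^k$, using the Leray–Hirsch property of the universal bundle and the Künneth isomorphism $K^0(X\times\sM^k)\cong(K^0(X)\ox K^0(\sM^k))\oplus(K^1(X)\ox K^1(\sM^k))$ recorded above; in that approach there is no open-dense issue and the proof is essentially the one Markman gives, carried over verbatim once the universal family and the $K$-theoretic Künneth decomposition are in place.
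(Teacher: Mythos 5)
The paper does not actually prove this statement: it is quoted directly from Markman \cite[Th.~3]{mar2} and marked as a cited result, so there is no internal argument to compare yours against. That said, your sketch correctly identifies the bridge that makes Markman's theorem apply here, namely the spectral (Beauville--Narasimhan--Ramanan) correspondence realizing $\sM^k(r,d)$ as a moduli space of pure one-dimensional stable sheaves on a Poisson surface built from the total space of $K(kp)$ --- this is exactly the setting of \cite{mar2} --- and your observation that twisting $\mathbb{E}^k$ by $\pi_2^*\mathcal{L}$ does not change the subring generated by the K\"unneth-component Chern classes is correct and relevant, since it is what makes the statement independent of the choice of universal family.

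The genuine gap is in the step you yourself flag as the main obstacle: passing from a projective moduli space $M_{X'}$ of sheaves on a compactification of $|K(kp)|$ down to the open piece $\sM^k$ by a codimension estimate on the complement and surjectivity of the restriction $H^*(M_{X'},\bZ)\to H^*(\sM^k,\bZ)$. That bound is neither established nor obviously true (the locus of sheaves whose support meets the section at infinity need not have high codimension), and no perfection-of-the-Morse-stratification argument in this paper supplies it. More importantly, the detour is unnecessary and is not how Markman argues: his proof is intrinsic to the smooth quasi-projective moduli space, proceeding by expressing the class of the diagonal in $\sM^k\times\sM^k$ through the Chern classes of the relative $\mathrm{Ext}$-complex of the universal sheaf with itself, from which generation by K\"unneth components follows by pairing cohomology classes against that diagonal decomposition. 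Your ``alternative, cleaner route'' gestures toward working directly on $\sM^k$, but misattributes the mechanism (it is not a Leray--Hirsch argument on Atiyah--Bott-style generators). If you intend to prove the theorem rather than cite it, the diagonal-decomposition argument is the one to reproduce; otherwise a direct citation, as the paper does, is the appropriate move.
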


\section{Preliminary Results} 
\label{sec:3}

Let $i_k\: \sM^k \hookrightarrow \sM^{k+1}$ be the embedding given by
the tensorization map of the $k$-Higgs field
$(E,\Phi^k) \longmapsto (E, \Phi^{k} \ox s_p)$, where $s_p$ 
is a fixed nonzero section of $L_p$. We want to prove that the map
$$
\pi_j(i_k) : \pi_j\big( \sM^{k}(r,d) \big) \to \pi_j\big( \sM^{k+1}(r,d) \big)
$$
stabilizes as $k\to \infty$. But first, we need to present some preliminary
results to conclude that.

\begin{Prop}
\label{embChernClasses}
  Let $i_k\: \sM^k \hookrightarrow \sM^{k+1}$ be the embedding above mentioned. 
  Consider the $K$-classes $e_i^k \in K(\sM^k)$. Then $i_k^*\big(c_j(e_i^{k+1}) \big) = c_j(e_i^{k})$.
\end{Prop}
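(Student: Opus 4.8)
The plan is to deduce this from the naturality of Chern classes together with Proposition~\ref{UniversalHiggsBundleEmbedding}. The key observation is that the K\"unneth factors $e_i^k$ are defined via the decomposition $[\mathbb{E}^k] = \sum_{j=0}^{2g} x_j \ox e_j^k$ in $K^*(X \times \sM^k)$ relative to a \emph{fixed} basis $\{x_1,\dots,x_{2g+2}\}$ of $K^*(X)$, chosen once and for all independently of $k$. Since this basis does not depend on $k$, pulling back the identity $(\mathrm{Id}_X \times i_k)^*(\mathbb{E}^{k+1}) \cong \mathbb{E}^k$ through the K\"unneth isomorphism and matching coefficients of $x_j$ will force $i_k^*(e_j^{k+1}) = e_j^k$ in $K^*(\sM^k)$.

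First I would make precise the functoriality of the K\"unneth decomposition: the external product $K^*(X) \ox K^*(\sM^{k+1}) \to K^*(X \times \sM^{k+1})$ is natural in the second variable, so $(\mathrm{Id}_X \times i_k)^*$ sends $x_j \ox e_j^{k+1}$ to $x_j \ox i_k^*(e_j^{k+1})$. Summing over $j$ and using Proposition~\ref{UniversalHiggsBundleEmbedding} gives
$$
\sum_{j=0}^{2g} x_j \ox i_k^*(e_j^{k+1}) = \big[(\mathrm{Id}_X \times i_k)^*\mathbb{E}^{k+1}\big] = [\mathbb{E}^k] = \sum_{j=0}^{2g} x_j \ox e_j^k
$$
in $K^0(X \times \sM^k)$. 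Because the $x_j$ form a basis of $K^*(X)$ and $K^*(X)$ is torsion free, the K\"unneth isomorphism for $X \times \sM^k$ lets me equate coefficients, yielding $i_k^*(e_j^{k+1}) = e_j^k$ for every $j$. (A small point I would address: the decomposition written in the excerpt only runs to $j = 2g$ with $x_0 \in K^0(X)$ a combination of $x_{2g+1}, x_{2g+2}$, so I would phrase the coefficient comparison in terms of the honest total basis $x_1,\dots,x_{2g+2}$ to avoid ambiguity about $x_0$ and $e_0^k$.)

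Once the equality $i_k^*(e_i^{k+1}) = e_i^k$ holds at the level of $K$-theory, I would invoke naturality of Chern classes: for any map $f$ and $K$-class $\alpha$, $f^*(c_j(\alpha)) = c_j(f^*\alpha)$. Applying this with $f = i_k$ and $\alpha = e_i^{k+1}$ gives $i_k^*(c_j(e_i^{k+1})) = c_j(i_k^*(e_i^{k+1})) = c_j(e_i^k)$, which is exactly the claim.

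The main obstacle is not any single deep step but rather ensuring the K\"unneth-factor bookkeeping is genuinely $k$-independent — i.e.\ that Markman's construction really does use one fixed basis of $K^*(X)$ for all $k$, and that the isomorphism in Proposition~\ref{UniversalHiggsBundleEmbedding} is compatible with this choice (there is an a priori ambiguity by a line bundle pulled back from $\sM^k$, as noted in the discussion of the universal property, but since we work with the same universal family $\mathbb{E}^k$ normalized consistently, this does not affect the $K$-class). Verifying this compatibility, and the torsion-freeness hypothesis that legitimizes comparing K\"unneth coefficients, is where I would concentrate the care; the rest is formal naturality.
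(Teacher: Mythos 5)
Your proposal follows essentially the same route as the paper's proof: pull back the K\"unneth decomposition $[\mathbb{E}^k] = \sum_j x_j \ox e_j^k$ via Proposition~\ref{UniversalHiggsBundleEmbedding}, compare coefficients against the fixed basis of $K^*(X)$ to get $i_k^*(e_i^{k+1}) = e_i^k$, and conclude by naturality of Chern classes. The extra care you flag (torsion-freeness of $K^*(X)$, $k$-independence of the basis, normalization of the universal family) is sensible but not treated any differently in the paper, which leaves these points implicit.
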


\begin{proof}
By Proposition \ref{UniversalHiggsBundleEmbedding}, and by the naturality of the Chern classes:
$$
\sum_{j=0}^{2g} x_j  \ox e_j^k = [\mathbb{E}^k] 
= 
[(\mathrm{Id}_X \times i_k)^*(\mathbb{E}^{k+1})] 
= 
\sum_{j=0}^{2g} x_j  \ox i_k^*(e_j^{k+1})
$$
we have that
$i_k^*\big( e_i^{k+1} \big) = e_i^{k}$ and hence 
$i_k^*\big(c_j(e_i^{k+1}) \big) = c_j(e_i^{k})$. 
\end{proof}

\begin{Cor}
\label{SurjectiveCohomology}
 Let $i_k\: \sM^k \hookrightarrow \sM^{k+1}$ be the 
 embedding above mentioned. Then, the induced cohomology 
 homomorphism 
 $i_k^*\: H^*(\sM^{k+1},\bZ) \twoheadrightarrow H^*(\sM^{k},\bZ)$ 
 is surjective.
\end{Cor}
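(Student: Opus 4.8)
The claim is that $i_k^*\colon H^*(\sM^{k+1},\bZ) \twoheadrightarrow H^*(\sM^k,\bZ)$ is surjective. The plan is to combine the generation statement of Theorem~\ref{[Mar](2)} with the compatibility of the $K$-theory K\"unneth factors established in Proposition~\ref{embChernClasses}. First I would invoke Theorem~\ref{[Mar](2)}: the ring $H^*(\sM^k,\bZ)$ is generated, as a $\bZ$-algebra, by the Chern classes $c_j(e_i^k)$ of the K\"unneth factors of the universal bundle $\mathbb{E}^k$. Since a ring homomorphism is surjective as soon as its image contains a set of algebra generators of the target, it suffices to show that every generator $c_j(e_i^k)$ lies in the image of $i_k^*$.

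Next I would apply Proposition~\ref{embChernClasses}, which gives exactly $i_k^*\big(c_j(e_i^{k+1})\big) = c_j(e_i^k)$ for each of the relevant indices $i$ and $j$. Thus every algebra generator of $H^*(\sM^k,\bZ)$ is hit by $i_k^*$, with an explicit preimage among the corresponding generators of $H^*(\sM^{k+1},\bZ)$. Since $i_k^*$ is a ring homomorphism, its image is a subring of $H^*(\sM^k,\bZ)$ containing all these generators, hence equals all of $H^*(\sM^k,\bZ)$. This proves surjectivity. One small point worth spelling out is that the generation theorem of Markman applies uniformly to every $\sM^m(r,d)$ with $\GCD(r,d)=1$ (in particular to both $\sM^k$ and $\sM^{k+1}$), and that the K\"unneth basis $\{x_0,x_1,\dots,x_{2g}\}$ of $K^*(X)$ used in the decomposition $[\mathbb{E}^m]=\sum_j x_j\ox e_j^m$ is chosen once and for all, independently of $m$; this is what makes the identification $i_k^*(e_j^{k+1})=e_j^k$ in Proposition~\ref{embChernClasses} meaningful.

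There is essentially no obstacle here: the corollary is a formal consequence of the two preceding results. If anything, the only thing to be careful about is the bookkeeping between even and odd $K$-theory — the factor $e_0^k$ lives in $K^0(\sM^k)$ while $e_1^k,\dots,e_{2g}^k$ live in $K^1(\sM^k)$ — but the Chern classes $c_j(e_i^k)$ are defined for $K$-classes of either parity and the naturality argument in Proposition~\ref{embChernClasses} already handled all of them simultaneously, so no separate treatment is needed. I would therefore keep the proof to two or three sentences: cite Theorem~\ref{[Mar](2)} for generation, cite Proposition~\ref{embChernClasses} for the preimages, and conclude that a ring map hitting a generating set is surjective.
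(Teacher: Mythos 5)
Your proposal is correct and follows exactly the paper's own argument: the paper's proof simply states that the corollary is an immediate consequence of Theorem~\ref{[Mar](2)} (generation by Chern classes of the K\"unneth factors) and Proposition~\ref{embChernClasses} (those generators are hit by $i_k^*$). You have merely spelled out the standard fact that a ring homomorphism whose image contains a generating set is surjective, which the paper leaves implicit.
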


\begin{proof}
 The result is an immediate consequence 
 of Theorem \ref{[Mar](2)} and 
 Proposition \ref{embChernClasses}.
\end{proof}

\begin{Def}
  A \emph{gauge transformation} is an automorphism of $E$. Locally, a gauge transformation 
  $g\in Aut(E)$ is a $C^\infty(E)$-function with values in $GL_r(\bC)$. A gauge 
  transformation $g$ is called \emph{unitary} if $g$ preserves a hermitian inner product 
  on $E$. We will denote $\gG$ as the group of unitary gauge transformations. 
  {Atiyah and Bott \cite{atbo}} denote $\bar{\gG}$ as the quotient of $\gG$ 
  by its constant central $U(1)$-subgroup. We will follow this notation too. Moreover, denote 
  $B\gG$ and $B\bar{\gG}$ as the classifying spaces of $\gG$ and 
  $\bar{\gG}$, respectively.
\end{Def}

We get the fibration
$$
BU(1)\to B\gG \to B\bar{\gG}
$$
of classifying spaces, which splits actually as the product
$$
B\gG \cong BU(1)\times B\bar{\gG}.
$$
Then, the generators of $H^{*}(B\gG)$ give generators for $H^{*}(B\bar{\gG})$ and so,
$B\bar{\gG}$ is a free graded commutative algebra on those generators, since $B\gG$ is,
and consequently, $B\bar{\gG}$ is free of torsion. The reader may see 
{Atiyah and Bott \cite[Sec.~ 9.]{atbo}} 
and 
{Hausel \cite[Chap.~3]{hau}}
for the details.

Let $ \displaystyle \sM^\infty := \lim_{k\to \infty} \sM^k = \bigcup_{k=0}^\infty \sM^{k}$ 
be the direct limit of the spaces $\big\{ \sM^k(r,d) \big\}_{k=0}^\infty$. {Hausel and Thaddeus 
\cite{hath1}} prove that:

\begin{Th}[{Hausel and Thaddeus \cite[(9.7)]{hath1}}]
\label{[HaTh1](9.7)}
 The classifying space of $\bar{\gG}$ is homotopically equivalent to the 
 direct limit of the spaces $\sM^k(r,d)$:
 $$
 \displaystyle B\bar{\gG} \simeq \sM^\infty = \lim_{k\to \infty} \sM^k. 
 $$
 \QEDA
\end{Th}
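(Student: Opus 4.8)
The plan is to realize $\sM^\infty$ as the quotient of a contractible configuration space by a free action of $\bar{\gG}$, thereby exhibiting it as a model for $B\bar{\gG}$. Fix a $C^\infty$ complex vector bundle $E$ of rank $r$ and degree $d$, let $\mathcal{A}$ be the space of $\bar\partial$-operators on $E$ (an affine space modeled on $\Omega^{0,1}(\End(E))$, hence contractible), and set
$$
\mathcal{C}^k = \big\{(\bar\partial_A,\Phi) : \bar\partial_A\in\mathcal{A},\ \Phi\in\Omega^0(\End(E)\ox K(kp)),\ \bar\partial_A\Phi=0\big\},
$$
with projection $\mathcal{C}^k\to\mathcal{A}$ whose fibre over $\bar\partial_A$ is $H^0_{\bar\partial_A}(\End(E)\ox K(kp))$. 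The tensorization $\Phi\mapsto\Phi\ox s_p$ induces inclusions $\mathcal{C}^k\hookrightarrow\mathcal{C}^{k+1}$ compatible with $i_k$, and I write $\mathcal{C}^\infty=\lim_{k\to\infty}\mathcal{C}^k$. The complex gauge group $\gG^{\bC}$ acts by $g\cdot(\bar\partial_A,\Phi)=(g\bar\partial_Ag^{-1},g\Phi g^{-1})$; its constant central $\bC^*$ acts trivially on both factors, and the coprimality $\GCD(r,d)=1$ forces stable $=$ semistable, with a stable $k$-Higgs pair having automorphism group exactly the scalars. Hence $\bar{\gG}^{\bC}=\gG^{\bC}/\bC^*$ acts freely on the stable locus $\mathcal{C}^{k,s}$, and (via the Hitchin--Kobayashi correspondence recalled above) $\sM^k=\mathcal{C}^{k,s}/\bar{\gG}^{\bC}$; passing to the limit, $\sM^\infty=\mathcal{C}^{\infty,s}/\bar{\gG}^{\bC}$.

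First I would show that $\mathcal{C}^\infty$ is contractible. Since $\deg(\End E)=0$, Serre duality gives $H^1(\End(E)\ox K(kp))\cong H^0(\End(E)\ox\sO_X(-kp))^*$, which vanishes for all $k$ sufficiently large, uniformly in $\bar\partial_A$. For such $k$ the fibre dimension $h^0(\End(E)\ox K(kp))=r^2(g-1+k)$ is constant by Riemann--Roch, so $\mathcal{C}^k\to\mathcal{A}$ is a finite-rank vector bundle over a contractible base and $\mathcal{C}^k$ is contractible. As the inclusions $\mathcal{C}^k\hookrightarrow\mathcal{C}^{k+1}$ are closed cofibrations and the contractible spaces are cofinal, the colimit $\mathcal{C}^\infty$ inherits contractibility.

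Next I would prove that deleting the unstable locus does not change the homotopy type in the limit. If $(E,\Phi)$ is unstable, some proper subbundle $F\subsetneq E$ with $\mu(F)\geq\mu(E)$ is $\Phi$-invariant, so $\Phi$ lies in the subspace of endomorphisms preserving $F$; the transverse directions are parametrized by $H^0(\Hom(F,E/F)\ox K(kp))$, whose dimension grows linearly in $k$. Summing these Atiyah--Bott/Shatz contributions over Harder--Narasimhan types, the real codimension $c_k$ of $\mathcal{C}^k\setminus\mathcal{C}^{k,s}$ in $\mathcal{C}^k$ satisfies $c_k\to\infty$. Since excising a locally closed subset of codimension $c_k$ leaves $\pi_j$ unchanged for $j\leq c_k-2$, the inclusion $\mathcal{C}^{\infty,s}\hookrightarrow\mathcal{C}^\infty$ is a weak homotopy equivalence in the colimit, so $\mathcal{C}^{\infty,s}$ is contractible as well.

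Finally, the free action of $\bar{\gG}^{\bC}$ on $\mathcal{C}^{\infty,s}$ (with local slices furnished by Coulomb gauge fixing) identifies the free quotient with the Borel construction, whence, using $\mathcal{C}^\infty\simeq *$,
$$
\sM^\infty \;=\; \mathcal{C}^{\infty,s}/\bar{\gG}^{\bC}
\;\simeq\; \mathcal{C}^{\infty,s}\times_{\bar{\gG}^{\bC}}E\bar{\gG}^{\bC}
\;\simeq\; \mathcal{C}^\infty\times_{\bar{\gG}^{\bC}}E\bar{\gG}^{\bC}
\;\simeq\; B\bar{\gG}^{\bC}.
$$
Because the unitary gauge group is a maximal compact subgroup (deformation retract) of the complex one, the retraction descends modulo the center to give $\bar{\gG}^{\bC}\simeq\bar{\gG}$, hence $B\bar{\gG}^{\bC}\simeq B\bar{\gG}$, which yields $\sM^\infty\simeq B\bar{\gG}$ as claimed. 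I expect the main obstacle to be the codimension estimate $c_k\to\infty$ together with its homotopy-theoretic use: one must control the Harder--Narasimhan stratification of the infinite-dimensional $\mathcal{C}^k$ uniformly in $k$ and justify rigorously that excising strata of unbounded codimension produces a weak equivalence in the colimit. By contrast, the contractibility step is routine once the vanishing $H^1(\End(E)\ox K(kp))=0$ is established for large $k$.
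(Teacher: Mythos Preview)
The paper does not prove this theorem; it is quoted from Hausel--Thaddeus \cite[(9.7)]{hath1} with only a citation and a \QEDA. Your outline is essentially their strategy (free $\bar{\gG}^{\bC}$-action on a configuration space that becomes contractible in the limit once the unstable strata are removed), so there is nothing in the present paper to compare against beyond that reference.

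There is, however, a genuine gap in your argument. You assert that $H^1\big(\End(E)\ox K(kp)\big)\cong H^0\big(\End(E)\ox\sO_X(-kp)\big)^*$ vanishes for all $k$ sufficiently large, \emph{uniformly in} $\bar\partial_A\in\mathcal{A}$. This is false: for any fixed $k$ one may choose a split holomorphic structure $E\cong L\oplus M$ with $\deg(L)-\deg(M)$ as large as one likes, so that $\End(E)$ contains the line subbundle $LM^{-1}$ of degree exceeding $k$, whence $H^0\big(\End(E)\ox\sO_X(-kp)\big)\neq 0$. Consequently $\mathcal{C}^k\to\mathcal{A}$ is \emph{never} a finite-rank vector bundle over all of $\mathcal{A}$, and your proof that $\mathcal{C}^k$ is contractible collapses.

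The contractibility itself is easily rescued: the scaling $(\bar\partial_A,\Phi)\mapsto(\bar\partial_A,t\Phi)$ for $t\in[0,1]$ deformation retracts $\mathcal{C}^k$ onto $\mathcal{A}\times\{0\}\simeq\mathcal{A}$, regardless of whether the fibres have constant dimension. But the failure of the vector-bundle picture means $\mathcal{C}^k$ is not a manifold, so your subsequent appeal to ``real codimension $c_k$ of $\mathcal{C}^k\setminus\mathcal{C}^{k,s}$'' cannot be taken at face value. The Hausel--Thaddeus argument handles this by working stratum-by-stratum with the Harder--Narasimhan stratification (each unstable stratum is itself smooth, with normal space inside $\mathcal{C}^{k,s}$ of rank growing linearly in $k$), rather than by a global codimension count in a possibly singular ambient space. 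You correctly identify the codimension step as the crux; the point is that the difficulty begins earlier than you suggest, already at the smoothness of $\mathcal{C}^k$.
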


\begin{assumption}
 Unless otherwise stated, from now on, we will assume that the rank is either $r = 2$ or $r = 3$.
\end{assumption}

\begin{Th}
\label{TorsionFreeRankTwoAndThree}
 $H^*\big(\sM^k(r,d)\big)$ is torsion free 
 for all $k$.
\end{Th}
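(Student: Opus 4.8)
The plan is to prove that $H^*(\sM^k(r,d))$ is torsion free for ranks $r=2,3$ by combining the surjectivity statement of Corollary~\ref{SurjectiveCohomology} with the known torsion-freeness of the limiting space $\sM^\infty \simeq B\bar{\gG}$. The key observation is that $B\bar{\gG}$ is a free graded commutative algebra on torsion-free generators, hence $H^*(B\bar{\gG};\bZ)$ is torsion free; by Theorem~\ref{[HaTh1](9.7)} the same holds for $H^*(\sM^\infty;\bZ)$. Since each $\sM^k$ is a finite-dimensional manifold and the inclusions $i_k\colon \sM^k \hookrightarrow \sM^{k+1}$ are the maps realizing $\sM^\infty$ as the direct limit, the restriction map $H^*(\sM^\infty;\bZ) \to H^*(\sM^k;\bZ)$ factors the composite of all the $i_j^*$ for $j \geq k$; each individual $i_k^*$ is surjective by Corollary~\ref{SurjectiveCohomology}. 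Moreover, in each fixed degree $n$, the cohomology $H^n(\sM^k)$ stabilizes for $k$ large (because the Morse-theoretic stratification built from critical submanifolds $F_\lambda$ adds cells of index tending to infinity with $k$), so $H^n(\sM^\infty) \to H^n(\sM^k)$ is an isomorphism for $k \gg 0$.

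First I would fix a cohomological degree $n$ and recall that, in that degree, the cohomology of the moduli space is built up from the cohomology of the Bott tower / gauge-group classifying space in a range growing with $k$; concretely, I would invoke the description of $H^*(\sM^k)$ via the Morse stratification of the Hitchin map $f$, in which the non-minimal strata $F_\lambda$ contribute to $H^*$ only in degrees exceeding a bound $\mu_k \to \infty$. Hence for $k \geq k_0(n)$ we get $H^j(\sM^k;\bZ) \cong H^j(\sM^\infty;\bZ)$ for all $j \leq n$, and the latter is torsion free. Next I would handle the remaining small values of $k < k_0(n)$: here I use the surjections $i_k^* \colon H^j(\sM^{k+1};\bZ) \twoheadrightarrow H^j(\sM^k;\bZ)$ from Corollary~\ref{SurjectiveCohomology}. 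A surjection of finitely generated abelian groups from a torsion-free group need not have torsion-free image in general — but here I would promote this to the statement that $i_k^*$ is split surjective, which follows from Markman's generation result (Theorem~\ref{[Mar](2)}) together with Proposition~\ref{embChernClasses}: the Chern classes $c_j(e_i^{k+1})$ that generate $H^*(\sM^{k+1})$ map to the corresponding generators $c_j(e_i^{k})$ of $H^*(\sM^k)$, so one can choose compatible polynomial generators and deduce that $H^*(\sM^k;\bZ)$ is a quotient algebra of $H^*(\sM^\infty;\bZ) = H^*(B\bar{\gG};\bZ)$ by a regular sequence; a quotient of a free graded-commutative algebra by generators chosen among the polynomial generators is again torsion free.

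The main obstacle I anticipate is precisely this last point: passing from ``$i_k^*$ is surjective'' to ``$H^*(\sM^k;\bZ)$ is torsion free'' is not formal, since surjective images of torsion-free groups can have torsion. The honest route is to show that the kernel of the composite map $H^*(B\bar{\gG};\bZ) \twoheadrightarrow H^*(\sM^k;\bZ)$ is generated by a portion of the free polynomial generators (or, equivalently, that $\sM^k$ has a cohomology presentation as $H^*(B\bar{\gG})$ modulo such relations), which is exactly the content one extracts from the Atiyah–Bott / Kirwan-type analysis of $\sM^k$ as it appears in Hausel and Thaddeus~\cite{hath1}. For rank $2$ this is essentially in Hausel~\cite{hau}; for rank $3$ I would lean on the torsion-free structure of $H^*(B\bar{\gG})$ in the rank-$3$ case together with the stabilization described above, which reduces every degree to the torsion-free limit. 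I would then note that once every graded piece $H^n(\sM^k;\bZ)$ is realized as a subgroup or a free quotient of a torsion-free group, the conclusion follows, completing the proof for both $r=2$ and $r=3$.
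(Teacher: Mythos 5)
Your proposal has a genuine gap, and it is exactly at the point you yourself flag as the ``main obstacle.'' For $k$ below the stabilization threshold $k_0(n)$, knowing that $i_k^*\colon H^*(\sM^{k+1},\bZ)\twoheadrightarrow H^*(\sM^k,\bZ)$ is surjective and that the generators $c_j(e_i^{k+1})$ map to the generators $c_j(e_i^k)$ does not control torsion: compatibility of generators says nothing about the relations, and a surjective image of a torsion-free graded ring can perfectly well have torsion (e.g.\ $\bZ[x]\twoheadrightarrow\bZ[x]/(2x)$ is surjective on the generator $x$). Your proposed repair --- that the kernel of $H^*(B\bar{\gG},\bZ)\twoheadrightarrow H^*(\sM^k,\bZ)$ is generated by a subset of the free polynomial generators --- is not only unjustified but structurally impossible: such a quotient would again be a free graded-commutative algebra on the surviving generators, hence infinite-dimensional in even degrees, whereas $H^*(\sM^k,\bZ)$ is finite-dimensional and its relations (cf.\ Hausel--Thaddeus on relations in the rank-two case) are genuinely nontrivial. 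There is also a circularity concern: the degree-wise isomorphism $H^j(\sM^\infty,\bZ)\cong H^j(\sM^k,\bZ)$ for $k\geq k_0(n)$ that you use for large $k$ is Corollary~\ref{CohomologyStabilization}, which in the paper sits downstream of the torsion-freeness statement you are trying to prove, so you would need to establish it independently before leaning on it.

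The paper's proof takes an entirely different and more robust route: since $\GCD(r,d)=1$, the Hitchin map $f$ of~(\ref{momentum_map_intro}) is a perfect Morse--Bott function, so by Frankel's criterion \cite[Corollary~1]{fra} the space $\sM^k$ is torsion free if and only if every critical submanifold $F_\lambda^k$ is. One then checks each critical submanifold by hand: $F_0=\sN$ is torsion free by Atiyah--Bott; the $(1,1)$- and $(1,1,1)$-VHS are identified with products of Jacobians and symmetric products of $X$, torsion free by Macdonald \cite[(12.3)]{mac}; and for rank three the $(1,2)$- and $(2,1)$-VHS are moduli of triples whose flip loci are projectivizations of bundles over such products, handled via the blow-up Lemma~\ref{BlowUpLemma}. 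If you want to salvage your approach, you would have to supply an actual Morse-theoretic or Kirwan-surjectivity argument identifying the kernel of $H^*(B\bar{\gG},\bZ)\to H^*(\sM^k,\bZ)$ integrally --- which is considerably harder than the critical-submanifold computation the paper performs.
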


\begin{proof}
The proof uses the following result of {Frankel \cite[Corollary~1]{fra}}:
$$
F_{\la}^k\quad \textmd{is torsion free}\quad \forall \la \Leftrightarrow \sM^k\quad \textmd{is torsion free}.
$$

In fact, the result of Frankel is more general. The specific case of moduli spaces of Higgs bundles holds
because the proper momentum Hitchin map $f(E,\Phi)$ described in (\ref{momentum_map_intro}) is a perfect 
Morse-Bott function, since we are taking $\mathrm{GCD}(r,d) = 1$.

In both cases, $r=2$ and $r=3$, the moduli space of stable vector bundles corresponds to the first critical 
submanifold:
$
F_0 = f^{-1}(c_{0}) = f^{-1}(0) = \sN
$,
which is indeed torsion free (see Atiyah and Bott \cite[Theorem~ 9.9.]{atbo}).

  \begin{enumerate}
  \item When $\rk(E)=2$, Hitchin notes that the nontrivial critical submanifolds, 
  or $(1,1)$-VHS, are of the form
  {\footnotesize
  $$
  F_{d_1}^k = 
  \Bigg\{(E,\Phi^k) = (E_1 \oplus E_2, \left( \begin{array}{c c}
					0 & 0\\ 
					\varphi_{21}^k & 0
					\end{array}
				       \right)
		      )\Bigg| 
  \begin{array}{c}
   \begin{array}{c c}
    \deg(E_1) = d_1, & \deg(E_2) = d_2,\\ 
    \rk(E_1) = 1, & \rk(E_2) = 1,\\
   \end{array}\\
   \varphi_{21}^k : E_1\to E_2  \ox K(kp)
  \end{array}
 \Bigg\}
  $$
  }
and $F_{d_1}^k$ is isomorphic to the moduli space of $\sg_H$-stable triples 
$\sN_{\sg_H}(1,1,\bar{d},d_1)$, where 
$\sg_H$ is giving by $\sg_H = \deg \big(K(kp)\big)=2g-2+k$ and 
$\bar{d}=d_2+2g-2+k$, by the map:
$$
(E_1  \ox E_2, \Phi^k) \mapsto (E_2  \ox K(kp), E_1, \varphi_{21}^k).
$$
Furthermore, by Hitchin \cite{hit2}, $\sN_{\sg_H}(1,1,\bar{d},d_1)$ is isomorphic 
to the cartesian product
$\sJ^{d_1}(X) \times \Sym^{\bar{d}-d_1}(X)$. Hence:
$$
F_{d_1}^k \cong \sJ^{d_1}(X) \times \Sym^{\bar{d}-d_1}(X)
$$
which, by Macdonald \cite[(12.3)]{mac}, is indeed torsion free.
  \item When $\rk(E)=3$, there are three kinds of nontrivial critical submanifolds:
      \subitem{2.1.} $(1,2)$-VHS of the form
    
    {\footnotesize
    $$
    F_{d_1}^k = \Bigg\{(E,\Phi^k) = (E_1 \oplus E_2, 
		       \left( \begin{array}{c c}
                        0 & 0\\ \varphi_{21}^k & 0
                       \end{array}
                       \right)
      )\Bigg| 
      \begin{array}{c}
      \begin{array}{c c}
      \deg(E_1) = d_1, & \deg(E_2) = d_2,\\ 
      \rk(E_1) = 1, & \rk(E_2) = 2,\\
      \end{array}\\
      \varphi_{21}^k : E_1\to E_2  \ox K(kp)
      \end{array}
      \Bigg\}.
     $$
     }
In this case, there are isomorphisms between the 
$(1,2)$-VHS and the moduli spaces of triples 
$F^k_{d_1} \cong \sN_{\sg_H(k)}(2,1,\tilde{d}_1,\tilde{d}_2)$, 
where $\tilde{d}_1=d_2+2(2g-2+k)$ and $\tilde{d}_2=d_1$, and 
where the isomorphism is giving by a map similar to the above 
mentioned. 

By {Mu\~noz, Ortega, V\'azquez-Gallo \cite[Theorem~4.8. and Lemma~6.1.]{mov}}, 
when working with 
$
\sN_{\sg}(2,1,\tilde{d}_1,\tilde{d}_2)
$,
they find that either the flip loci $S_{\sg_c}^+$ is the projectivization of a bundle of rank
$
r^{+} = \tilde{d}_1 - d_M - \tilde{d}_2
$
over 
$$
\sJ^{d_M}(X)\times \sJ^{\tilde{d}_2}(X)\times \Sym^{r^{+}}(X)
$$
where 
$\displaystyle
d_M = \frac{\sg_c + \tilde{d}_1 + \tilde{d}_2}{3}\in \bZ
$,
or the flip loci $S_{\sg_c}^-$ is the projectivization of a bundle of rank
$
r^{-} = 2d_M - \tilde{d}_1 +g - 1
$
over 
$$
\sJ^{d_M}(X)\times \sJ^{\tilde{d}_2}(X)\times \Sym^{r^{+}}(X)
$$
with
$\displaystyle
d_M \in \bZ
$
as above. Hence, by {Macdonald \cite[(12.3)]{mac}}, the flip loci $S_{\sg_c}^+$ and $S_{\sg_c}^-$ 
are free of torsion for $\sg_c \in I$. Therefore, $\sN_{\sg_H(k)}(2,1,\tilde{d}_1,\tilde{d}_2)$ is
also torsion free, and so is $F^k_{d_1}$.

The fact that $\sN_{\sg_H(k)}(2,1,\tilde{d}_1,\tilde{d}_2)$ is torsion free since the flip loci are, 
follows from the next lemma:

\begin{Lem}
\label{BlowUpLemma}
  Let $M$ be a complex manifold, and let $\Sigma \subset M$ be a complex submanifold. Let $\tilde{M}$ be the blow-up of $M$ along
  $\Sigma$. Then
  \begin{displaymath}
    H^*(\tilde{M},\bZ) \cong H^*(M,\bZ) \oplus H^{*+2}(\Sigma,\bZ) \oplus \dots \oplus H^{*+2n-2}(\Sigma,\bZ)
  \end{displaymath}
  where $n$ is the rank of $N_{\Sigma/M}$, the normal bundle of $\Sigma$ in $M$.
\end{Lem}

\begin{proof}(Lemma \ref{BlowUpLemma})\\
  Let $E=\mathbb{P}(N_{\Sigma/M})$ be the projectivized normal bundle of $\Sigma$ in $M$, sometimes called 
  \emph{exceptional divisor}. The result follows from the fact that the additive cohomology of the blow-up 
  $H^*(\tilde{M},\bZ)$, can be expressed as:
    \begin{displaymath}
      H^*(\tilde{M}) \cong \pi^*H^*(M) \oplus H^*(E) / \pi^*H^*(\Sigma)
    \end{displaymath}
  (see for instance {Griffiths and Harris \cite[Chapter~4.,Section~6.]{grha}}),
  and the fact that $H^*(E)$ is a free module over $H^*(\Sigma)$ via the injective map $\pi^*\colon H^*(\Sigma)\to H^*(E)$ with basis
    \begin{displaymath}
      1, c, \dots, c^{n-1},
    \end{displaymath}
  where $c \in H^2(E)$ is the first Chern class of the tautological line bundle along the fibres of the projective bundle $E \to \Sigma$\
  (see the general version at {Husemoller \cite[Chapter~17.,Theorem~2.5.]{hus}}).
\end{proof}

\subitem{2.2.} $(2,1)$-VHS of the form
    
    {\footnotesize
    $$
    F_{d_2}^k = \Bigg\{(E,\Phi^k) = (E_2 \oplus E_1, \left( \begin{array}{c c}
                        0 & 0\\ \varphi_{21}^k & 0
                       \end{array}
      \right)
      )\Bigg| 
      \begin{array}{c}
      \begin{array}{c c}
	\deg(E_2) = d_2, & \deg(E_1) = d_1,\\ 
	\rk(E_2) = 2, & \rk(E_1) = 1,\\
      \end{array}\\
      \varphi_{21}^k : E_2\to E_1  \ox K(kp)
      \end{array}
     \Bigg\}.
    $$
    }
By symmetry, similar results can be obtained using the isomorphisms 
between the $(2,1)$-VHS and the moduli spaces of triples:
$$
F^k_{d_2} \cong \sN_{\sg_H(k)}(1,2,\tilde{d}_1,\tilde{d}_2),
$$ 
and the dual isomorphisms 
$$
\sN_{\sg_H(k)}(2,1,\tilde{d}_1,\tilde{d}_2) 
\cong 
\sN_{\sg_H(k)}(1,2,-\tilde{d}_2,-\tilde{d}_1)
$$ 
between moduli spaces of triples.
    \subitem{2.3.} $(1,1,1)$-VHS of the form
 {\footnotesize
 $$
 F^k_{d_1 d_2 d_3} = 
\Bigg\{(E,\Phi^k) = (E_1 \oplus E_2 \oplus E_3, \left( \begin{array}{c c c}
                        0 & 0 & 0\\ \varphi^k_{21} & 0 & 0\\ 0 & \varphi^k_{32} & 0
                       \end{array}
 \right)
 )\Bigg| 
 \begin{array}{c}
  \deg(E_j) = d_j,\\ 
  rk(E_j) = 1,\\
  \varphi_{ij} : E_j \to E_i  \ox K
 \end{array}
\Bigg\}.
$$
}
Finally, we know that
$$
F^k_{d_1 d_2 d_3} \xrightarrow{\quad \cong \quad} 
\Sym^{m_1}(X)\times \Sym^{m_2}(X) \times \sJ^{d_3}(X)
$$
$$
(E,\Phi^k) \mapsto (\mathrm{div}(\varphi^k_{21}),\mathrm{div}(\varphi^k_{32}),E_3),
$$
where $m_i=d_{i+1}-d_i+\sg_H$, and so, by Macdonald \cite[(12.3)]{mac} there is nothing to worry about torsion.
 \end{enumerate}
\end{proof}

\begin{Cor}\label{[HaTh1](10.1)}
 $$
 \varprojlim H^*(\sM^k,\bZ)\cong 
 H^*(\sM^{\infty},\bZ)\cong 
 H^*(B\bar{\gG},\bZ).
 $$
 \QEDA
\end{Cor}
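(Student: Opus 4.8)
# Proof Proposal for Corollary \ref{[HaTh1](10.1)}

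The plan is to assemble this corollary from three ingredients already established in the excerpt: the surjectivity of the restriction maps in cohomology (Corollary \ref{SurjectiveCohomology}), the torsion-freeness of every $H^*(\sM^k,\bZ)$ (Theorem \ref{TorsionFreeRankTwoAndThree}), and the homotopy equivalence $B\bar{\gG} \simeq \sM^\infty$ (Theorem \ref{[HaTh1](9.7)}). The second isomorphism $H^*(\sM^\infty,\bZ) \cong H^*(B\bar{\gG},\bZ)$ is then immediate from Theorem \ref{[HaTh1](9.7)}, so the real content is the first isomorphism $\varprojlim H^*(\sM^k,\bZ) \cong H^*(\sM^\infty,\bZ)$.

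First I would set up the Milnor exact sequence for the cohomology of a direct limit (a colimit) of spaces. Since $\sM^\infty = \bigcup_{k} \sM^k$ with the $\sM^k$ forming an increasing sequence of closed submanifolds via the embeddings $i_k$, there is a short exact sequence
\begin{equation}
0 \to {\varprojlim}^1 H^{n-1}(\sM^k,\bZ) \to H^n(\sM^\infty,\bZ) \to \varprojlim H^n(\sM^k,\bZ) \to 0
\label{eq:milnor}
\end{equation}
for each $n$, where the inverse limits are taken over the maps $i_k^*$. So it suffices to show the $\varprojlim^1$ term vanishes. Here is where the two properties enter: by Corollary \ref{SurjectiveCohomology}, each $i_k^* : H^{n-1}(\sM^{k+1},\bZ) \to H^{n-1}(\sM^k,\bZ)$ is surjective, so the inverse system $\{H^{n-1}(\sM^k,\bZ), i_k^*\}$ satisfies the Mittag-Leffler condition trivially (in fact it is an epimorphic tower). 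An epimorphic tower of abelian groups has vanishing $\varprojlim^1$, hence \eqref{eq:milnor} collapses to the desired isomorphism $H^n(\sM^\infty,\bZ) \cong \varprojlim H^n(\sM^k,\bZ)$.

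The role of Theorem \ref{TorsionFreeRankTwoAndThree} is to guarantee that all the groups in sight are torsion free, so that the $\varprojlim$ behaves well and the identification is clean — in particular one does not need to worry about lim-one contributions hidden by torsion phenomena, and one can if desired phrase everything with field or integral coefficients interchangeably. I would remark that this matches the rank-two treatment in Hausel \cite{hau} and the general statement in Hausel and Thaddeus \cite[(10.1)]{hath1}, which is cited in the label; indeed the cleanest route is simply to invoke \cite[(10.1)]{hath1} directly, noting that its hypotheses (surjectivity of the restriction maps and absence of torsion) are precisely Corollary \ref{SurjectiveCohomology} and Theorem \ref{TorsionFreeRankTwoAndThree} in our ranks $r=2,3$.

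The main obstacle — really the only subtle point — is justifying that $\sM^\infty$ carries the colimit topology making \eqref{eq:milnor} valid and the continuous cohomology of $B\bar{\gG}$ agree with $\varprojlim H^*(\sM^k)$; this is exactly the continuity statement underlying Theorem \ref{[HaTh1](9.7)}, so I would lean on Hausel and Thaddeus \cite{hath1} for it rather than reprove it. Everything else is formal homological algebra (Milnor sequence, vanishing of $\varprojlim^1$ for epimorphic towers), and the corollary follows by chaining the three displayed isomorphisms.
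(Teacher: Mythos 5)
Your proposal is correct: the paper itself offers no argument for this corollary beyond the \QEDA{} box and the implicit citation of Hausel--Thaddeus (10.1), and the route you take --- Milnor exact sequence for the increasing union $\sM^\infty=\bigcup_k\sM^k$, vanishing of $\varprojlim^1$ because Corollary \ref{SurjectiveCohomology} makes the tower of cohomology groups epimorphic (hence Mittag--Leffler), and Theorem \ref{[HaTh1](9.7)} for the identification with $H^*(B\bar{\gG},\bZ)$ --- is exactly the standard argument the citation encapsulates. The only small refinement worth noting is that Theorem \ref{TorsionFreeRankTwoAndThree} is not actually needed for the $\varprojlim^1$ vanishing, since surjectivity of the maps $i_k^*$ already suffices; torsion-freeness plays its essential role later, in Lemma \ref{HomologyStabilization}, when passing from cohomology to homology via the Universal Coefficient Theorem.
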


\begin{Cor} 
\label{CohomologyStabilization}
 For each $n\geq 0$ there is a $k_0$, depending on $n$, such that
 $$
 i_k^* : H^j(\sM^{k+1},\bZ) \xrightarrow{\quad \cong \quad} H^j(\sM^{k},\bZ)
 $$
 is an isomorphism for all $k \geq k_0$ and for all $j \leq n$.
 \QEDA
\end{Cor}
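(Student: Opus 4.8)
The plan is to deduce this from Corollary~\ref{SurjectiveCohomology}, Theorem~\ref{TorsionFreeRankTwoAndThree} and Corollary~\ref{[HaTh1](10.1)} by a Betti-number stabilization argument carried out one degree at a time. Fix $j \geq 0$ and write $A_k := H^j(\sM^k,\bZ)$, with transition maps $i_k^* \colon A_{k+1} \to A_k$. Each $A_k$ is a finitely generated abelian group --- $\sM^k$ is a smooth quasi-projective variety, so it has the homotopy type of a finite CW complex; equivalently, $f$ is a perfect Morse--Bott function with finitely many critical submanifolds of finite type --- and it is free by Theorem~\ref{TorsionFreeRankTwoAndThree}. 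By Corollary~\ref{SurjectiveCohomology} the maps $i_k^*$ are surjective, so the ranks $b_j^k := \rk A_k$ form a non-decreasing sequence.

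Next I would produce a uniform upper bound for the $b_j^k$. By Corollary~\ref{[HaTh1](10.1)} one has $\varprojlim_k A_k \cong H^j(\sM^\infty,\bZ) \cong H^j(B\bar{\gG},\bZ)$, and this group is finitely generated free abelian: by Atiyah and Bott \cite{atbo}, $H^*(B\gG,\bZ)$ is a free graded-commutative algebra on finitely many generators in each degree, and the splitting $B\gG \cong BU(1)\times B\bar{\gG}$ transports this structure to $B\bar{\gG}$. Since the inverse system $(A_k,i_k^*)$ has surjective transition maps, the canonical projection $\varprojlim_k A_k \to A_m$ is surjective for every $m$ (lift any element step by step up the tower), whence $b_j^k \leq \rk H^j(B\bar{\gG},\bZ) < \infty$ for all $k$.

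Being non-decreasing and bounded above, the sequence $(b_j^k)_k$ is eventually constant; let $k_0^{(j)}$ be an index from which on it is constant. For $k \geq k_0^{(j)}$ the surjection $i_k^* \colon A_{k+1} \to A_k$ splits, because $A_k$ is free, so $A_{k+1} \cong \ker(i_k^*)\oplus A_k$; comparing ranks gives $\rk\ker(i_k^*) = b_j^{k+1} - b_j^k = 0$, and a rank-zero subgroup of the free abelian group $A_{k+1}$ is trivial. Hence $i_k^*$ is an isomorphism for all $k \geq k_0^{(j)}$. Setting $k_0 := \max\{k_0^{(0)},\dots,k_0^{(n)}\}$ then yields a single threshold valid for all $j \leq n$, which is the assertion of the Corollary.

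The only step requiring real input, and the one I expect to be the crux, is the finite generation of $H^j(B\bar{\gG},\bZ)$ --- that the inverse limit is a Noetherian $\bZ$-module; the rest is elementary homological bookkeeping. This is precisely where the explicit Atiyah--Bott description of $H^*(B\gG)$ enters. If one prefers to bypass the rank count, the same conclusion follows from Noetherianity directly: the kernels $K_k := \ker\big(\varprojlim_\ell A_\ell \to A_k\big)$ form a descending chain of subgroups of the Noetherian module $\varprojlim_\ell A_\ell$ with $\bigcap_k K_k = 0$, so the chain stabilizes, is then eventually $0$, and the projections $\varprojlim_\ell A_\ell \to A_k$ --- hence the maps $i_k^*$ between them --- become isomorphisms.
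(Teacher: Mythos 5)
Your main argument is correct and is, as far as one can tell, exactly the reasoning the paper leaves implicit: the Corollary is stated there with no written proof, immediately after Corollary~\ref{[HaTh1](10.1)}, and the intended derivation is precisely yours --- surjective transition maps (Corollary~\ref{SurjectiveCohomology}) onto finitely generated torsion-free groups (Theorem~\ref{TorsionFreeRankTwoAndThree}) whose inverse limit $H^j(B\bar{\gG},\bZ)$ is finitely generated must eventually become isomorphisms. The degree-by-degree rank count, the surjectivity of $\varprojlim_\ell A_\ell \to A_m$ for a tower of surjections, the splitting of a surjection onto a free abelian group, and the final maximum over $j\leq n$ are all sound, and you correctly isolate the one substantive input: finite generation of $H^j(B\bar{\gG},\bZ)$ in each degree, which follows from the Atiyah--Bott description of $H^*(B\gG)$ as a free graded-commutative algebra on finitely many generators of positive degree.

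One genuine error, though it is confined to your closing ``alternative'' remark: a descending chain of subgroups of a Noetherian $\bZ$-module with trivial intersection need not stabilize. Noetherian means the \emph{ascending} chain condition, and $\bZ\supset 2\bZ\supset 4\bZ\supset\cdots$ is a non-stabilizing descending chain in $\bZ$ with intersection $0$. So ``the same conclusion follows from Noetherianity directly'' is false as stated, and the rank/freeness bookkeeping of your main argument cannot be bypassed this way. Delete that last paragraph, or replace it by the observation that the surjections $\varprojlim_\ell A_\ell \twoheadrightarrow A_k$ are maps of finitely generated free abelian groups of eventually constant rank --- which is just your main argument again.
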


By the Universal Coefficient Theorem for Cohomology (see for instance 
{Hatcher \cite[Theorem~3.2. and Corollary~3.3.]{hat1}}), we get

\begin{Lem}
\label{HomologyStabilization}
 For each $n\geq 0$ there is a $k_0$, depending on $n$, such that 
 $$
 H_j(\sM^\infty,\sM^k;\bZ) = 0
 $$ 
 for all $k \geq k_0$ and for all $j \leq n$. 
\end{Lem}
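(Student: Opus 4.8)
The plan is to deduce the vanishing of $H_j(\sM^\infty,\sM^k;\bZ)$ from the vanishing of the relative cohomology $H^j(\sM^\infty,\sM^k;\bZ)$ in the appropriate range, obtaining the latter from Corollary \ref{CohomologyStabilization} by a long exact sequence argument. \emph{First} I would record that for each $N$ there is a $k_0$, depending on $N$, such that the restriction homomorphism $H^j(\sM^\infty;\bZ)\to H^j(\sM^k;\bZ)$ induced by the inclusion $\sM^k\hookrightarrow\sM^\infty$ is an isomorphism for all $k\geq k_0$ and all $j\leq N$. Indeed, by Corollary \ref{CohomologyStabilization} the tower $\{H^j(\sM^k;\bZ)\}_k$, with the maps $i_k^*$, is eventually constant in each degree $j\leq N$; hence $\varprojlim_k H^j(\sM^k;\bZ)$ equals this stable value, the $\varprojlim^1$-term vanishes, and the canonical projection $\varprojlim_k H^j(\sM^k;\bZ)\to H^j(\sM^k;\bZ)$ is an isomorphism for $k\geq k_0$. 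Composing with the isomorphism $\varprojlim_k H^*(\sM^k;\bZ)\cong H^*(\sM^\infty;\bZ)$ of Corollary \ref{[HaTh1](10.1)}, which is the one induced by the restrictions, gives the claim.

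\emph{Next} I would feed this into the long exact cohomology sequence of the pair $(\sM^\infty,\sM^k)$,
$$
\cdots \to H^{j-1}(\sM^\infty) \to H^{j-1}(\sM^k) \to H^{j}(\sM^\infty,\sM^k) \to H^{j}(\sM^\infty) \to H^{j}(\sM^k) \to \cdots .
$$
Surjectivity of the restriction in degree $j-1$ kills the connecting map into $H^j(\sM^\infty,\sM^k)$, and injectivity of the restriction in degree $j$ kills the map out of it; so $H^j(\sM^\infty,\sM^k;\bZ)=0$ whenever the restriction is an isomorphism in degrees $j-1$ and $j$. Applying the first step with $N=n+1$ yields a $k_0$ with $H^j(\sM^\infty,\sM^k;\bZ)=0$ for all $k\geq k_0$ and all $j\leq n+1$.

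\emph{Finally} I would invoke the Universal Coefficient Theorem for cohomology of the pair $(\sM^\infty,\sM^k)$,
$$
0 \to \operatorname{Ext}\!\big(H_{j-1}(\sM^\infty,\sM^k;\bZ),\bZ\big) \to H^{j}(\sM^\infty,\sM^k;\bZ) \to \Hom\!\big(H_{j}(\sM^\infty,\sM^k;\bZ),\bZ\big) \to 0 .
$$
From $H^j(\sM^\infty,\sM^k;\bZ)=0$ for $j\leq n+1$ one reads off $\Hom(H_i,\bZ)=0$ and $\operatorname{Ext}(H_i,\bZ)=0$ for all $i\leq n$. Since $\sM^k$ is a quasi-projective variety and $\sM^\infty\simeq B\bar{\gG}$ has finitely generated cohomology — and hence, being torsion free, finitely generated homology — in each degree (cf. Theorem \ref{[Mar](2)}, Corollary \ref{[HaTh1](10.1)} and Theorem \ref{TorsionFreeRankTwoAndThree}), the relative groups $H_i(\sM^\infty,\sM^k;\bZ)$ are finitely generated; and a finitely generated abelian group $A$ with $\Hom(A,\bZ)=0$ is torsion while one with $\operatorname{Ext}(A,\bZ)=0$ is free, so $H_i(\sM^\infty,\sM^k;\bZ)=0$ for all $k\geq k_0$ and all $i\leq n$, which is the assertion.

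The argument is essentially formal once Corollary \ref{CohomologyStabilization} is in hand; the one point deserving attention is the finite-generation input in the last step, needed to upgrade the vanishing of $\Hom$ and $\operatorname{Ext}$ to the vanishing of the groups themselves and to absorb the harmless degree shift (which is why one applies the cohomological stabilization with $n+1$ in place of $n$). One could instead dualize the cohomology isomorphisms directly, using torsion-freeness, to get $H_j(\sM^k;\bZ)\xrightarrow{\cong}H_j(\sM^\infty;\bZ)$ in a range and then run the homology long exact sequence of the pair, but the route through the Universal Coefficient Theorem is the one suggested by the statement.
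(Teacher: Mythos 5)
Your argument is correct, but it is not the route the paper takes, and the two differ in which exact sequences and which finiteness input they use. The paper never forms the relative cohomology of the pair: it applies the Universal Coefficient Theorem to the absolute groups $H^*(\sM^k,\bZ)$ and $H^*(\sM^{k+1},\bZ)$, uses the torsion-freeness of Theorem~\ref{TorsionFreeRankTwoAndThree} (the geometric input special to ranks $2$ and $3$) to kill the $\mathrm{Ext}$ terms, and thereby dualizes the isomorphisms $i_k^*$ of Corollary~\ref{CohomologyStabilization} into isomorphisms $H_j(\sM^k;\bZ)\to H_j(\sM^{k+1};\bZ)\to H_j(\sM^\infty;\bZ)$ for $k\geq k_0$, $j\leq n$; the vanishing of $H_j(\sM^\infty,\sM^k;\bZ)$ then falls out of the homology long exact sequence of the pair. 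This is exactly the alternative you sketch in your closing sentence. Your route instead kills the \emph{relative cohomology} first, via the cohomology long exact sequence and the stabilization of the restriction $H^j(\sM^\infty;\bZ)\to H^j(\sM^k;\bZ)$ (which you correctly extract from Corollaries~\ref{CohomologyStabilization} and~\ref{[HaTh1](10.1)} by a Mittag--Leffler/${\varprojlim}^1$ argument that the paper leaves implicit), and then applies the Universal Coefficient Theorem to the pair. What this buys you is that torsion-freeness is not needed at this step and the degree shift ($n+1$ versus $n$) is handled explicitly; what it costs you is the need for $H_i(\sM^\infty,\sM^k;\bZ)$ to be finitely generated, hence for $H_*(B\bar{\gG};\bZ)$ to be degreewise finitely generated. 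That last point is true (it follows from the Atiyah--Bott description of $B\gG$ as a product with degreewise finitely generated free cohomology), but your one-line justification --- passing from finitely generated torsion-free cohomology to finitely generated homology --- is the only place where the argument is quicker than it should be and deserves a reference or a short UCT argument of its own; everything else is sound.
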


\begin{proof}
 The embedding $i_k\: \sM^k(r,d) \to \sM^{k+1}(r,d)$ is injective, and by 
 Corollary \ref{SurjectiveCohomology}
 we know that $i_k^*\: H^j(\sM^{k},\bZ) \leftarrow H^j(\sM^{k+1},\bZ)$ is 
 surjective for all $k$. Hence, by the Universal Coefficient Theorem, we 
 get that the following diagram
 {\footnotesize
 \begin{align}
    \begin{xy}
  (10,10)*+{0}="b0";
  (40,10)*+{0}="c0";
  (70,10)*+{0}="d0";
  (-10,0)*+{0}="a1";
  (10,0)*+{\mathrm{Ext}\big(H_{j-1}(\sM^{k}),\bZ \big)}="b1";
  (40,0)*+{H^j(\sM^{k},\bZ)}="c1";
  (70,0)*+{\mathrm{Hom}\big(H_j(\sM^{k}),\bZ \big)}="d1";
  (90,0)*+{0}="e1";
  (-10,-15)*+{0}="a2";
  (10,-15)*+{\mathrm{Ext}\big(H_{j-1}(\sM^{k+1}),\bZ \big)}="b2";
  (40,-15)*+{H^j(\sM^{k+1},\bZ)}="c2";  
  (70,-15)*+{\mathrm{Hom}\big(H_j(\sM^{k+1}),\bZ \big)}="d2";
  (90,-15)*+{0}="e2";
  {\ar@{->} "a1";"b1"};
  {\ar@{->} "b1";"c1"};
  {\ar@{->} "c1";"d1"};
  {\ar@{->} "d1";"e1"};
  {\ar@{->} "a2";"b2"};
  {\ar@{->} "b2";"c2"};
  {\ar@{->} "c2";"d2"};
  {\ar@{->} "d2";"e2"};
  {\ar@{->}^{(i_{k*})^*} "b2";"b1"};
  {\ar@{->}^{i_k^*} "c2";"c1"};
  {\ar@{->}^{(i_{k*})^*} "d2";"d1"};
  {\ar@{->} "b1";"b0"};
  {\ar@{->} "c1";"c0"};
  {\ar@{->} "d1";"d0"};
    \end{xy}
 \end{align}
 }
 commutes. By Theorem~\ref{TorsionFreeRankTwoAndThree} $H^*(\sM^k,\bZ)$ 
 is torsion free, and so, by Corollary~\ref{CohomologyStabilization},
 for all $n\geq 0$, 
 there is $k_0$, depending on $n$, such that 
 $$
 H_j\big(\sM^{k}(r,d),\bZ\big) 
 \xrightarrow{\quad \cong \quad} 
 H_j\big(\sM^{k+1}(r,d),\bZ\big) 
 \xrightarrow{\quad \cong \quad} 
 H_j\big(\sM^\infty(r,d),\bZ\big)
 $$
 for all $k \geq k_0$ and 
 for all $j \leq n$. Hence
 $$
 H_j(\sM^\infty,\sM^k;\bZ) = 0
 $$
 for all $k \geq k_0$ and for all $j \leq n$.
\end{proof}

\begin{Prop}
\label{FundamentalGroups}
 For general rank $r$, denoting $\sM^k = \sM^k(r,d)$ for simplicity, and 
 $\sN = \sN(r,d)$ as the moduli of stable bundles, the following diagram 
 commutes
 \begin{align}
 \begin{xy}
  (0,25)*+{\pi_1(\sM^k)}="a";
  (0,0)*+{\pi_1(\sN)}="b";
  (30,25)*+{\pi_1(\sM^{k+1})}="c";
  (30,0)*+{\pi_1(\sN)}="d";
  {\ar@{->}^{\cong} "b";"a"};
  {\ar@{->}^{\cong} "a";"c"};
  {\ar@{->}_{\cong} "d";"c"};
  {\ar@{->}_{=} "b";"d"};
 \end{xy}
\end{align}
\end{Prop}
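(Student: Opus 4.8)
The plan is to deduce the whole diagram from two ingredients: the identification of $\sN$ with the minimal critical submanifold of the Hitchin map inside every $\sM^k$, and the fact that the tensorisation embedding $i_k$ fixes this submanifold pointwise.

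First I would set up the vertical arrows. For each $k$ let $j_k\colon \sN \hookrightarrow \sM^k(r,d)$ be the map $E\mapsto (E,0)$, whose image is the bottom critical submanifold $F_0^k = f^{-1}(0)$ of the proper momentum map $f$ of (\ref{momentum_map_intro}). Since $\GCD(r,d)=1$, $f$ is a perfect Morse--Bott function (this is exactly the input used in the proof of Theorem~\ref{TorsionFreeRankTwoAndThree}), so up to homotopy $\sM^k$ is obtained from $\sN$ by attaching, for each nonzero critical submanifold $F_\lambda^k$, the disk bundle of its negative normal bundle. By Hitchin's index computation these cells all have dimension $\geq 3$ when $g>2$, and the indices only grow with $k$; attaching cells of dimension $\geq 3$ does not change $\pi_1$, so $\pi_1(j_k)\colon \pi_1(\sN)\xrightarrow{\ \cong\ }\pi_1(\sM^k)$ for every $k$. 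This is the identification $\pi_1(\sM^k)\cong\pi_1(\sN)$ already carried out by Hausel~\cite{hau} for $r=2$, and the argument goes through verbatim for the ranks under consideration.

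Next comes the only substantive point: $i_k\circ j_k = j_{k+1}$. Indeed, for a stable bundle $E$,
$$
i_k\big(j_k(E)\big) = i_k\big[(E,0)\big] = \big[(E,\,0\ox s_p)\big] = \big[(E,0)\big] = j_{k+1}(E),
$$
because the zero Higgs field is unchanged by tensoring with the section $s_p$. Hence the triangle $\sN\xrightarrow{\ j_k\ }\sM^k\xrightarrow{\ i_k\ }\sM^{k+1}$ with hypotenuse $j_{k+1}$ commutes on the nose, and applying $\pi_1$ shows that the square in the statement commutes, its bottom edge being literally the identity of $\pi_1(\sN)$.

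Finally I would read off the top arrow. From $\pi_1(i_k)\circ\pi_1(j_k) = \pi_1(j_{k+1})$ and the fact that $\pi_1(j_k)$ and $\pi_1(j_{k+1})$ are isomorphisms we get $\pi_1(i_k) = \pi_1(j_{k+1})\circ\pi_1(j_k)^{-1}$, an isomorphism; this is the horizontal map of the diagram. The only real obstacle is the first step: one must know that the Morse-theoretic identification $\pi_1(\sM^k)\cong\pi_1(\sN)$ holds uniformly in $k$, i.e.\ that the non-minimal critical strata always have real codimension $\geq 3$ for $g>2$; granting that standard fact, the commutativity and the remaining isomorphism are purely formal.
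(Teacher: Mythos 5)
Your argument is essentially the paper's: the proof in the paper is a one-line citation to Bradlow, Garc\'ia-Prada and Gothen \cite[Proposition~3.2.]{bgg2}, whose content is precisely the Morse-theoretic identification $\pi_1(\sN)\xrightarrow{\ \cong\ }\pi_1(\sM^k)$ via the minimal critical submanifold of the Hitchin function that you sketch. Your explicit check that $i_k\circ j_k=j_{k+1}$ (since $0\ox s_p=0$), which makes the square commute, is a detail the paper leaves implicit but is correct and consistent with its approach.
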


\begin{proof}
 It is an immediate consequence of the result proved by {Bradlow, Garc\'ia-Prada and 
 Gothen \cite[Proposition~3.2.]{bgg2}} using Morse theory.
\end{proof}

\begin{Prop}\label{FundamentalGroupsII}
 For all $k \in \bN$, there is an isomorphism between the fundamental group of $\sM^k$
 and the fundamental group of the direct limit of the spaces $\big\{ \sM^k(r,d) \big\}_{k=0}^\infty$:
 $$
 \pi_1(\sM^{k}) \xrightarrow{\quad \cong \quad} \pi_1(\sM^\infty).
 $$
\end{Prop}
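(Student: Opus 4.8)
The plan is to combine two ingredients. The first is the general fact that the fundamental group commutes with the sequential direct limit $\sM^\infty=\bigcup_{k\geq 0}\sM^k$, so that $\pi_1(\sM^\infty)\cong\varinjlim_k\pi_1(\sM^k)$. The second is that every structure map of this direct system is an isomorphism, which is exactly the content of Proposition~\ref{FundamentalGroups}.

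For the first ingredient, recall that each $i_k\colon\sM^k\hookrightarrow\sM^{k+1}$ is a closed embedding of smooth quasi-projective varieties, hence a cofibration, and $\sM^\infty$ carries the colimit (weak) topology. Then any compact subset of $\sM^\infty$ lies in a single finite stage $\sM^k$; applying this to the image of a map $\bS^1\to\sM^\infty$ and to the image of a nullhomotopy $\bS^1\times[0,1]\to\sM^\infty$ shows, in the standard way (compare {Hausel \cite[Chap.~3]{hau}}), that the natural map $\varinjlim_k\pi_1(\sM^k)\to\pi_1(\sM^\infty)$ is both surjective (every loop is homotopic to one in some $\sM^k$) and injective (any nullhomotopy of a loop from $\sM^k$ takes place in some $\sM^{k'}$ with $k'\geq k$). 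Alternatively one may invoke Theorem~\ref{[HaTh1](9.7)}: $\sM^\infty\simeq B\bar{\gG}$ has the homotopy type of a CW complex filtered by the subspaces $\sM^k$, for which the statement is classical.

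For the second ingredient, Proposition~\ref{FundamentalGroups} exhibits, for each $k$, a commuting square in which the maps $\pi_1(\sN)\to\pi_1(\sM^k)$, $\pi_1(\sN)\to\pi_1(\sM^{k+1})$ and the identity $\pi_1(\sN)\to\pi_1(\sN)$ are all isomorphisms; hence $\pi_1(i_k)\colon\pi_1(\sM^k)\to\pi_1(\sM^{k+1})$ is an isomorphism for every $k$. A direct system all of whose structure maps are isomorphisms has the property that, for each fixed $k$, the canonical map $\pi_1(\sM^k)\to\varinjlim_j\pi_1(\sM^j)$ is an isomorphism. Composing it with the isomorphism $\varinjlim_j\pi_1(\sM^j)\xrightarrow{\ \cong\ }\pi_1(\sM^\infty)$ from the previous paragraph yields the claimed isomorphism $\pi_1(\sM^k)\xrightarrow{\ \cong\ }\pi_1(\sM^\infty)$, and one checks directly that this composite is the map induced by the inclusion $\sM^k\hookrightarrow\sM^\infty$.

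The only point requiring care is the interchange of $\pi_1$ with the direct limit; everything else is formal once Proposition~\ref{FundamentalGroups} is in hand. The main obstacle is therefore purely point-set-topological — ensuring that $\sM^\infty$ carries the weak topology and that compact subsets sit inside finite stages — and it is resolved exactly as in Hausel's treatment of the analogous direct limit $B\bar{\gG}\simeq\sM^\infty$.
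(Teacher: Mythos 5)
Your proposal is correct and follows essentially the same route as the paper: first interchange $\pi_1$ with the direct limit, then use Proposition~\ref{FundamentalGroups} to see that all structure maps $\pi_1(i_k)$ are isomorphisms, so each stage maps isomorphically onto the colimit. The only difference is in how the interchange is justified --- you argue via compactness of loop and homotopy images in the colimit topology (or via the CW filtration of $B\bar{\gG}$), whereas the paper cites Fulton's generalization of Van Kampen's theorem together with the fact, from Hausel and Thaddeus, that the inclusions $\sM^k\hookrightarrow\sM^{k+1}$ are embeddings of deformation neighborhood retracts; both are standard and adequate.
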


\begin{proof}
 Using the generalization of Van Kampen's Theorem presented by {Fulton \cite{ful}}, and using the fact that 
 $\sM^{k} \hookrightarrow \sM^{k+1}$ are embeddings of \emph{Deformation Neighborhood Retracts} (DNR), \emph{i.e.} 
 every $\sM^{k}(r,d)$ is the image of a map defined on some open neighborhood of itself and homotopic to the identity 
 (see for instance {Hausel and Thaddeus \cite[(9.1)]{hath1}}), we can conclude that 
 $\displaystyle \pi_1\big(\lim_{k \to \infty} \sM^k \big) = \lim_{k \to \infty} \pi_1\big(\sM^k \big)$.
\end{proof}

\begin{Rmk}
 By {Atiyah and Bott \cite{atbo}} we have:
 $$
 \pi_1(\sN)\cong
 H_1(X,\bZ)\cong
 \bZ^{2g},
 $$
 and hence, by Proposition~\ref{FundamentalGroups} and Proposition~\ref{FundamentalGroupsII}:
 $$
 \pi_1(\sM^k)\cong
 \pi_1(\sM^{\infty})\cong
 \bZ^{2g}.
 $$
\end{Rmk}

We will need the following version of Hurewicz Theorem, presented by {Hatcher \cite[Theorem~4.37.]{hat1}} (see also {James \cite{jam}}). 
Hatcher first mentions that, in the relative case when $(X,A)$ is an $(n-1)$-connected pair of path-connected spaces, the kernel of 
the Hurewicz map
$$h : \pi_n(X,A) \to H_n(X,A;\bZ)$$
contains the elements of the form $[\gamma][f]-[f]$ for $[\gamma] \in \pi_1(A)$. Hatcher defines $\pi_n'(X,A)$ to be the quotient 
group of $\pi_n(X,A)$ obtained by factoring out the subgroup generated by the elements of the form $[\gamma][f]-[f]$, or the normal 
subgroup generated by such elements in the case $n=2$ when $\pi_2(X,A)$ may not be abelian, then $h$ induces a homomorphism 
$h'\: \pi'_n(X,A) \to H_n(X,A;\bZ)$. The general form of Hurewicz Theorem presented by Hatcher deals with this homomorphism:

\begin{Th}(Hurewicz Theorem)\\
\label{HurewiczThm}
 If $(X,A)$ is an $(n-1)$-connected pair of path-connected spaces, with $n\geq 2$ and $A\neq \emptyset$, then 
 $h'\: \pi'_n(X,A) \to H_n(X,A;\bZ)$ is an isomorphism and $H_j(X,A;\bZ)=0$ for $j \leq n-1$. \QEDA
\end{Th}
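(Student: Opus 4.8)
The plan is to reduce, via CW approximation, to a CW pair with a tightly controlled cell structure, and then to match up cellular chains on the homotopy side and the homology side. First I would choose a CW approximation $(X',A') \to (X,A)$, i.e. a map of pairs inducing isomorphisms on all homotopy groups (for every choice of basepoint) and hence on all homology groups, compatible with the maps $h'$ by naturality; so it is enough to treat $(X',A')$. Since $\pi_j(X',A') = 0$ for $j \leq n-1$, the compression lemma together with the homotopy extension property lets me replace $X'$ (rel $A'$, up to homotopy equivalence of pairs) by a CW complex obtained from $A'$ by attaching cells only in dimensions $\geq n$: one kills the relative $1,\dots,(n-1)$-cells one dimension at a time. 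Note $X'$ is path-connected because $A' \to X'$ is a bijection on $\pi_0$ (as $n \geq 2$).

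With this cell structure, the cellular chain complex of the pair $(X',A')$ vanishes in degrees $< n$, so $H_j(X',A';\bZ) = 0$ for all $j \leq n-1$ is immediate. For the top degree, let $Y = A' \cup (\text{$n$-cells})$ be the relative $n$-skeleton. The crux is the lemma that $\pi_n(Y,A')$ is a free module over $\bZ[\pi_1(A')]$ with basis the characteristic maps of the $n$-cells; one proves this by passing to the universal cover of $Y$, where $A'$ collapses to a wedge of $n$-spheres, and applying the relative Hurewicz isomorphism for the elementary pair $(D^n,S^{n-1})$ together with an excision/van Kampen argument. Passing to $\pi_n'(Y,A')$ — killing the $\pi_1(A')$-action, i.e. the normal subgroup generated by the elements $[\gamma][f]-[f]$ — turns this free module into the free abelian group on the $n$-cells, which is precisely $C_n^{\mathrm{cell}}(X',A';\bZ)$, and $h'$ is the evident cell-to-cell identification.

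Finally I would account for the higher cells. Attaching the $(n+1)$-cells to $Y$ alters $\pi_n'$ and $H_n$ in exactly the same way: each $(n+1)$-cell contributes, through its attaching map regarded first in $\pi_n(Y,A')$ and then in $\pi_n'(Y,A')$, a relation that corresponds under the identification above to the cellular boundary $\partial_{n+1}$ of that cell, while cells of dimension $\geq n+2$ affect neither group. Hence $\pi_n'(X',A') \cong \operatorname{coker}(\partial_{n+1}) = H_n(X',A';\bZ)$, compatibly with $h'$, and we are done. (Alternatively, in the case where $A'$ is simply connected, one can bypass the module lemma by turning $A' \hookrightarrow X'$ into a cofibration and invoking homotopy excision/Blakers--Massey to show $X'/A'$ is $(n-1)$-connected, then applying the absolute Hurewicz theorem to $X'/A'$.)

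The main obstacle is the freeness lemma for $\pi_n(Y,A')$ over $\bZ[\pi_1(A')]$ together with the careful handling of the case $n = 2$: there $\pi_2(X',A')$ is only a group, in fact a crossed module over $\pi_1(A') \to \pi_1(X')$, so the Hurewicz map is defined not on it but on the quotient $\pi_2'$, and identifying $\pi_2'(Y,A')$ with the cellular chain group $C_2^{\mathrm{cell}}(X',A';\bZ)$ requires Whitehead's description of $\pi_2(Y,A')$ as the free crossed module on the $2$-cells and the computation of its $\pi_1$-coinvariants. The rest is routine bookkeeping with cellular chains and naturality of $h'$.
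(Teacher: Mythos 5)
The paper does not actually prove this statement: Theorem~\ref{HurewiczThm} is quoted from Hatcher [Theorem 4.37] and used as a black box (the \qedasymbol{} marks it as imported), so there is no internal argument to measure yours against. Your sketch is the classical cellular proof of the general relative Hurewicz theorem, and its architecture is sound: CW approximation, compression to a pair whose relative cells have dimension at least $n$, vanishing of the cellular chains below degree $n$, identification of $\pi_n$ of the relative $n$-skeleton with the free $\bZ[\pi_1(A')]$-module (free crossed module when $n=2$) on the $n$-cells, and matching of the $(n{+}1)$-cell relations with the cellular boundary so that $\pi_n'\cong\operatorname{coker}(\partial_{n+1})=H_n$. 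You also correctly isolate where all the difficulty lives, namely the freeness lemma and the $n=2$ crossed-module subtlety; your parenthetical alternative via Blakers--Massey for simply connected $A'$ is essentially the route many modern treatments (including the one the paper cites) take, reducing the general case to the simply-connected one by killing $\pi_1(A')$ with extra cells, which is precisely what the passage to $\pi_n'$ encodes. One caution: the phrase ``the universal cover of $Y$, where $A'$ collapses to a wedge of $n$-spheres'' is not right as written. In the universal cover $p\colon\tilde Y\to Y$ the preimage $\tilde A=p^{-1}(A')$ need not be simply connected and does not collapse; what is true is that $\tilde Y/\tilde A$ is a wedge of $n$-spheres indexed by the $n$-cells times the deck group, and passing from $\pi_n(\tilde Y,\tilde A)$ to $\pi_n(\tilde Y/\tilde A)$ still requires homotopy excision or an explicit general-position argument. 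That step is the real content of the freeness lemma, so as a self-contained proof your text has a deferred (though correctly identified and entirely standard) ingredient rather than an error.
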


\begin{Def}
\begin{enumerate}[i.]
 \item 
 The \emph{determinant} of a vector bundle $E \to X$ of rank $r$ is a line bundle giving by the exterior power of the vector bundle. 
 It gives a natural map of the form:
 \begin{align*}
  \mathrm{det} : \sN &\xrightarrow{\hspace{1.5cm}} \sJ^d\\
  E &{\hspace{0.3cm}}\longmapsto{\hspace{0.3cm}} \mathrm{det}(E) = \bigwedge^{r}E
 \end{align*}
 where $\sN = \sN(r,d)$ is the moduli space of stable bundles $E \to X$ of rank $r$ and degree $d$, and $\sJ^{d}$
 is the Jacobian of $X$. Fixing a line bundle $\Lambda \to X$, $\Lambda \in \sJ^d$, the fibre 
 $\sN_\Lambda = \sN_\Lambda(r,d) := \mathrm{det}^{-1}(\Lambda)$ is the 
 moduli space of stable bundles \emph{with fixed determinant}.
 
 \item
 Together with the trace, the determinant allows us to define the map
 \begin{align*}
  \zeta : \sM^k(r,d) &\xrightarrow{\hspace{1.5cm}} \sJ^d\times H^0(X,K(kp))\\
  (E,\Phi) &{\hspace{0.5cm}}\longmapsto{\hspace{0.5cm}} \big(\mathrm{det}(E),\tr(\Phi)\big)
 \end{align*}
 and to consider the fibre 
 $\sM^k_\Lambda(r,d) := \zeta^{-1}(\Lambda,0)$ which is the moduli space of $k$-Higgs bundles 
 \emph{with fixed determinant and trace zero}.
\end{enumerate}
\end{Def}

There is an important result of {Atiyah and Bott \cite{atbo}} that is relevant to mention here:

\begin{Th}[{Atiyah and Bott \cite[(9.12.)]{atbo}}]
\label{[AtBo](9.12.)}
 The moduli space $\sN_\Lambda(r,d)$ of stable bundles of fixed determinant $\Lambda$, 
 with $\GCD(r,d) = 1$, is simply connected.\QEDA
\end{Th}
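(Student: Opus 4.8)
The plan is to realize $\sN_\Lambda(r,d)$ as a fibre of the determinant morphism and to read off $\pi_1$ from the associated homotopy long exact sequence. First I would set up
$$
\mathrm{det}\colon \sN(r,d)\longrightarrow\sJ^d,\qquad E\longmapsto\textstyle\bigwedge^r E,
$$
whose fibre over $\Lambda$ is $\sN_\Lambda(r,d)$ by definition. Since $\GCD(r,d)=1$ the total space $\sN(r,d)$ is a smooth projective variety, and at $[E]$ the differential of $\mathrm{det}$ is the trace map $H^1(X;\End E)\to H^1(X;\sO_X)$, which is a split surjection because $\End E=\sO_X\oplus\End_0 E$ over $\bC$; hence $\mathrm{det}$ is a proper submersion and, by Ehresmann's theorem, a locally trivial fibre bundle --- in particular a Serre fibration. (Equivalently, the tensoring action of $\Jac^0(X)$ on $\sN(r,d)$ covers, through $L\mapsto L^{\ox r}$, the transitive translation action on $\sJ^d$, which also forces local triviality.) I would also record the standard facts that $\sN(r,d)$ and $\sN_\Lambda(r,d)$ are connected (irreducibility of the moduli of stable bundles) and that $\sJ^d$ is a $2g$-torus, so $\pi_2(\sJ^d)=0$ and $\pi_1(\sJ^d)\cong\bZ^{2g}$.

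Then the relevant piece of the homotopy long exact sequence of $\sN_\Lambda(r,d)\hookrightarrow\sN(r,d)\xrightarrow{\,\mathrm{det}\,}\sJ^d$ is
$$
0=\pi_2(\sJ^d)\longrightarrow\pi_1\!\big(\sN_\Lambda(r,d)\big)\longrightarrow\pi_1\!\big(\sN(r,d)\big)\xrightarrow{\ \mathrm{det}_*\ }\pi_1(\sJ^d)\longrightarrow\pi_0\!\big(\sN_\Lambda(r,d)\big)=0 .
$$
Exactness at $\pi_1(\sJ^d)$ makes $\mathrm{det}_*$ surjective; since $\pi_1(\sN(r,d))\cong\bZ^{2g}\cong\pi_1(\sJ^d)$ (as recorded earlier, after Atiyah--Bott) and a surjective endomorphism of a finitely generated free abelian group is an isomorphism, $\ker(\mathrm{det}_*)=0$. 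Exactness at $\pi_1(\sN_\Lambda(r,d))$ then gives an injection $\pi_1(\sN_\Lambda(r,d))\hookrightarrow\ker(\mathrm{det}_*)=0$, so $\pi_1(\sN_\Lambda(r,d))=0$. There is no circularity here, because the value $\pi_1(\sN(r,d))\cong\bZ^{2g}$ can be obtained without the fixed-determinant statement.

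For a proof not invoking that computation --- which is how Atiyah and Bott proceed --- I would instead work on the space of holomorphic structures. Write $\sN_\Lambda(r,d)=\mathcal{A}_\Lambda^{ss}/\bar{\gG}_\Lambda^c$, where $\mathcal{A}_\Lambda$ is the affine (hence contractible) space of holomorphic structures of fixed determinant $\Lambda$, $\mathcal{A}_\Lambda^{ss}$ is the stable locus (semistable $=$ stable, since $\GCD(r,d)=1$), and $\bar{\gG}_\Lambda^c$ is the complexified determinant-one gauge group modulo its finite centre $\mu_r$, acting freely in the coprime case. The complement $\mathcal{A}_\Lambda\setminus\mathcal{A}_\Lambda^{ss}$ is a union of Harder--Narasimhan (Shatz) strata, each of complex codimension $\geq 2$ for $g>1$ (Atiyah--Bott's codimension estimate), so removing it affects neither $\pi_0$ nor $\pi_1$: $\pi_1(\mathcal{A}_\Lambda^{ss})=0$ and $\pi_0(\mathcal{A}_\Lambda^{ss})=0$. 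The homotopy sequence of the principal bundle $\bar{\gG}_\Lambda^c\to\mathcal{A}_\Lambda^{ss}\to\sN_\Lambda(r,d)$ then yields $\pi_1(\sN_\Lambda(r,d))\cong\pi_0(\bar{\gG}_\Lambda^c)$; but $\bar{\gG}_\Lambda^c$ coincides, up to the finite central quotient, with $\mathrm{Map}(X,SL_r(\bC))\simeq\mathrm{Map}(X,SU(r))$, which is connected because $SU(r)$ is $2$-connected and $X$ is a surface. Hence $\pi_0(\bar{\gG}_\Lambda^c)=0$ and $\pi_1(\sN_\Lambda(r,d))=0$ again.

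The fibration argument is essentially bookkeeping once $\pi_1(\sN)\cong\bZ^{2g}$ is available; in a fully self-contained treatment the main obstacle is the gauge-theoretic input, namely Atiyah and Bott's estimate that every unstable Shatz stratum has complex codimension $\geq 2$ (so that replacing $\mathcal{A}_\Lambda$ by $\mathcal{A}_\Lambda^{ss}$ changes nothing on $\pi_0,\pi_1$), together with the identification of $\bar{\gG}_\Lambda^c$ with a determinant-one mapping space, after which the $2$-connectivity of $SU(r)$ closes the argument. The remaining ingredients --- irreducibility of the moduli spaces, the homotopy of tori, Ehresmann's theorem, and the surjectivity $H^1(\End E)\twoheadrightarrow H^1(\sO_X)$ --- are routine.
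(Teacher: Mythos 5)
The paper does not prove this statement at all: it is imported verbatim from Atiyah and Bott \cite[(9.12)]{atbo} and stamped with a QED box, so there is no internal argument to compare against. Judged on its own merits, your proposal is correct, and in fact your second argument is essentially Atiyah and Bott's original proof: realize $\sN_\Lambda$ as $\mathcal{A}_\Lambda^{ss}/\bar{\gG}_\Lambda^c$, use the codimension $\geq 2$ estimate for the unstable Shatz strata (valid since $g>2$) to transfer $\pi_0$ and $\pi_1$ from the contractible affine space $\mathcal{A}_\Lambda$ to $\mathcal{A}_\Lambda^{ss}$, and then identify $\pi_1(\sN_\Lambda)\cong\pi_0(\bar{\gG}_\Lambda^c)$, which vanishes because $SU(r)$ is $2$-connected and $X$ is a surface. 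Your first argument --- the homotopy long exact sequence of the fibration $\sN_\Lambda\hookrightarrow\sN\xrightarrow{\det}\sJ^d$ together with the Hopfian property of $\bZ^{2g}$ --- is a clean alternative that buys brevity at the cost of two imported inputs: $\pi_1(\sN)\cong\bZ^{2g}$ (which the paper itself also quotes from Atiyah--Bott, and which, as you correctly note, is obtained by the same gauge-theoretic method applied to the full gauge group, so there is no circularity) and the connectedness $\pi_0(\sN_\Lambda)=0$. Be aware that this last fact is itself part of Atiyah--Bott (9.12), so in a fully self-contained treatment you should derive it independently (e.g.\ from the codimension estimate in your second argument, or from irreducibility of the fixed-determinant moduli space); with that caveat both routes are sound.
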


\begin{Rmk}
\begin{enumerate}[i.]
 \item 
 Some of the results mentioned for the moduli space $\sM^k(r,d)$ in this section 
 remain valid for the fixed determinant moduli space $\sM_{\Lambda}^k(r,d)$. For instance,
 Theorem~\ref{[HaTh1](10.1)} holds true also for fixed determinant: 
 $$
 \sM_{\Lambda}^{\infty}(r,d)\simeq B\bar{\gG}
 $$ 
 (see {Hausel and Thaddeus~\cite{hath1}}). Nevertheless, Corollary~\ref{SurjectiveCohomology}
 does not adapt in a straightforward way, as we shall see in subsection~\ref{ssec:4.2}.
 \item
 The moduli space $\sM_\Lambda^{k}(r,d)$ is simply connected because Proposition~\ref{FundamentalGroups} 
 holds also for fixed determinant $k$-Higgs bundles. So, $\pi_1(\sM_\Lambda^{k})$ acts trivially on 
 $
 \pi_n(\sM_\Lambda^\infty, \sM_\Lambda^{k})
 $.
\end{enumerate}
\end{Rmk}

\section{Main Results} 
\label{sec:4}

\subsection{General Results} 
\label{ssec:4.1}
Here, we will concern the moduli spaces $\sM^k(r,d)$ of $k$-Higgs bundles, where the 
results are true under the condition that $\pi_1(\sM^k)$ acts trivially on all the 
higher relative homotopy groups of the pair $\big(\sM^{\infty}, \sM^k\big)$. However, 
we do not know if this condition is true or not.

\begin{Lem}
\label{[Z-R](2.2.17.)}
 If $\pi_1(\sM^{k})$ acts trivially on $\pi_n(\sM^\infty, \sM^{k})$,
 then for all $n\geq 0$ exists $k_0$, depending on $n$, such that $\pi_j(\sM^\infty, \sM^{k}) = 0$ 
 for all $k \geq k_0$ and for all $j \leq n$.
\end{Lem}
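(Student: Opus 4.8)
The plan is to establish, by induction on $n$, the slightly stronger assertion that for every $n \geq 0$ there is a $k_0 = k_0(n)$ such that the pair $\big(\sM^\infty, \sM^k\big)$ is $n$-connected for all $k \geq k_0$; the lemma is then this statement unwound, since $n$-connectedness of the pair means exactly $\pi_j\big(\sM^\infty, \sM^k\big) = 0$ for all $j \leq n$. The induction plays the relative Hurewicz Theorem (Theorem~\ref{HurewiczThm}) off against the homology stabilization of Lemma~\ref{HomologyStabilization}.

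For the base of the induction I would dispose of $n = 0$ and $n = 1$ at once, uniformly in $k$ (so $k_0(0) = k_0(1) = 0$). Since $\sM^k$ and $\sM^\infty$ are path-connected the pair is $0$-connected; and in the exact homotopy sequence of the pair the inclusion induces an isomorphism $\pi_1(\sM^k) \xrightarrow{\ \cong\ } \pi_1(\sM^\infty)$ by Proposition~\ref{FundamentalGroupsII}, while $\pi_0(\sM^k) \to \pi_0(\sM^\infty)$ is injective, so $\pi_1\big(\sM^\infty, \sM^k\big)$ is reduced to its base point and the pair is $1$-connected.

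For the inductive step, assume $n \geq 1$ and that $k_0(n)$ has been produced with $\big(\sM^\infty, \sM^k\big)$ being $n$-connected for $k \geq k_0(n)$. By Lemma~\ref{HomologyStabilization} applied to the integer $n+1$ there is a $k_1$ such that $H_j\big(\sM^\infty, \sM^k; \bZ\big) = 0$ for all $j \leq n+1$ and all $k \geq k_1$; set $k_0(n+1) = \max\{k_0(n), k_1\}$. Fix $k \geq k_0(n+1)$. Then $\big(\sM^\infty, \sM^k\big)$ is an $n$-connected pair of path-connected nonempty spaces with $n + 1 \geq 2$, and by the hypothesis of the lemma $\pi_1(\sM^k)$ acts trivially on $\pi_{n+1}\big(\sM^\infty, \sM^k\big)$, so the quotient $\pi_{n+1}'\big(\sM^\infty, \sM^k\big)$ occurring in Theorem~\ref{HurewiczThm} equals $\pi_{n+1}\big(\sM^\infty, \sM^k\big)$. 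Hence that theorem gives an isomorphism $\pi_{n+1}\big(\sM^\infty, \sM^k\big) \xrightarrow{\ \cong\ } H_{n+1}\big(\sM^\infty, \sM^k; \bZ\big) = 0$. Together with the inductive hypothesis (which already kills $\pi_j$ for $j \leq n$, as $k_0(n+1) \geq k_0(n)$) this shows the pair is $(n+1)$-connected, completing the induction; the lemma then follows by reading off the vanishing of $\pi_j$, $j \leq n$, from $n$-connectedness.

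The one genuinely load-bearing point is the identity $\pi_{n+1}' = \pi_{n+1}$ used in the inductive step, which is precisely where the triviality of the $\pi_1(\sM^k)$-action enters: without it the relative Hurewicz map is an isomorphism only after factoring out that action, so one would only obtain $H_{n+1}\big(\sM^\infty, \sM^k; \bZ\big) = 0$ rather than $\pi_{n+1}\big(\sM^\infty, \sM^k\big) = 0$, and the induction would not close. Beyond this one tacitly uses that the $\sM^k$ and $\sM^\infty$ have the homotopy type of CW complexes so that Hurewicz applies, which is guaranteed by the DNR structure of the embeddings $\sM^k \hookrightarrow \sM^{k+1}$ already invoked in Proposition~\ref{FundamentalGroupsII}; the remaining content is just matching the connectivity index to the index $n$ in Lemma~\ref{HomologyStabilization}.
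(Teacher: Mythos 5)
Your proposal is correct and follows essentially the same route as the paper: an induction that combines the relative Hurewicz Theorem (Theorem~\ref{HurewiczThm}) with the homology vanishing of Lemma~\ref{HomologyStabilization}, using the $\pi_1$-isomorphisms of Propositions~\ref{FundamentalGroups} and~\ref{FundamentalGroupsII} for the low-degree cases and the trivial-action hypothesis to identify $\pi_{n+1}'$ with $\pi_{n+1}$. Your bookkeeping of $k_0(n+1)=\max\{k_0(n),k_1\}$ is in fact a cleaner way to track the constant than the paper's, but the argument is the same.
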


\begin{proof}
 The proof proceeds by induction on 
 $m \in \bN$ for $2 \leq m \leq n$. The first induction step is trivial because
 $$\pi_1(\sN) = \pi_1(\sM) = \pi_1(\sM^{k}) = \pi_1(\sM^\infty)$$
 by Proposition \ref{FundamentalGroups}. For $m = 2$ we need $\pi_2(\sM^\infty,\sM^k)$ to be abelian. Consider 
 the sequence
 $$\pi_2(\sM^\infty) \to \pi_2(\sM^\infty,\sM^{k}) \to \pi_1(\sM^{k}) \to \pi_1(\sM^\infty) \to \pi_1(\sM^\infty,\sM^k) \to 0$$
 where $\pi_2(\sM^\infty) \twoheadrightarrow \pi_2(\sM^\infty,\sM^{k})$ is surjective, 
 $\pi_1(\sM^{k}) \xrightarrow{\ \cong\ } \pi_1(\sM^\infty)$ are isomorphic, and hence
 $\pi_1(\sM^\infty,\sM^k) = 0$. So, $\pi_2(\sM^\infty,\sM^k)$ is a quotient of the abelian group 
 $\pi_2(\sM^\infty)$, and so it is also abelian.\\
 Finally, suppose that the statement is true for all $j \leq m-1$ for $2 \leq m \leq n$. So, 
 $(\sM^\infty,\sM^{k})$ is $(m-1)$-connected, \emph{i.e.}
 $$\pi_j(\sM^\infty,\sM^{k}) = 0\quad \forall j \leq m-1.$$
 For $m\geq 2$, by Hurewicz Theorem \ref{HurewiczThm}, 
 $$h' : \pi'_m(\sM^\infty,\sM^{k}) \xrightarrow{\quad \cong \quad} H_m(\sM^\infty,\sM^{k};\bZ)$$ 
 is an isomorphism. By Lemma \ref{HomologyStabilization}, there is an integer $k_0$, depending on $m$,
 such that $H_m(\sM^\infty,\sM^{k};\bZ) = 0$ for all $k\geq k_0$. Hence, if
 $\pi_1(\sM^{k})$ acts trivially on  $\pi_n(\sM^\infty, \sM^{k})$ for all $n \in \bN$ and for all 
 $k \in \bN$, then
 $$\pi_m(\sM^\infty, \sM^{k}) = \pi'_m(\sM^\infty, \sM^{k}) = 0$$
 finishing the induction process.
\end{proof}

\begin{Cor}
\label{[Z-R](2.2.18.)}
 If $\pi_1(\sM^{k})$ acts trivially on $\pi_n(\sM^\infty, \sM^{k})$,
 then for all $n\geq 0$ exists $k_0$,depending on $n$, such that 
 $$
 \pi_j(\sM^{k}) \xrightarrow{\quad \cong \quad} \pi_j(\sM^\infty)
 $$
 for all $k \geq k_0$ and for all $j \leq n-1$.\QEDA
\end{Cor}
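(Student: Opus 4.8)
The plan is to read off the isomorphism directly from Lemma~\ref{[Z-R](2.2.17.)} via the long exact sequence of the pair $(\sM^\infty,\sM^k)$ in homotopy. Fix $n \geq 0$ and assume, as in the hypothesis, that $\pi_1(\sM^k)$ acts trivially on $\pi_n(\sM^\infty,\sM^k)$. By Lemma~\ref{[Z-R](2.2.17.)} there is an integer $k_0$, depending on $n$, such that $\pi_j(\sM^\infty,\sM^k) = 0$ for all $k \geq k_0$ and all $j \leq n$; that is, the pair $(\sM^\infty,\sM^k)$ is $n$-connected for $k \geq k_0$.

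Next I would write down the long exact homotopy sequence of the pair,
$$\cdots \to \pi_{j+1}(\sM^\infty,\sM^k) \to \pi_j(\sM^k) \xrightarrow{\pi_j(i)} \pi_j(\sM^\infty) \to \pi_j(\sM^\infty,\sM^k) \to \cdots,$$
where $i\colon \sM^k \hookrightarrow \sM^\infty = \lim_{k\to\infty}\sM^k$ is the canonical inclusion into the direct limit. For $j \leq n-1$ and $k \geq k_0$ both $\pi_{j+1}(\sM^\infty,\sM^k)$ (because $j+1 \leq n$) and $\pi_j(\sM^\infty,\sM^k)$ vanish, so exactness forces $\pi_j(i)\colon \pi_j(\sM^k) \to \pi_j(\sM^\infty)$ to be simultaneously injective and surjective, hence an isomorphism. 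The low-degree cases $j = 0,1$ may also be seen independently: path-connectedness handles $j=0$, and $j=1$ is Proposition~\ref{FundamentalGroupsII}, which is consistent with and subsumed by the exact-sequence argument.

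I do not expect any genuine obstacle at this stage: the substantive work --- the induction on connectivity, the appeal to the relative Hurewicz Theorem~\ref{HurewiczThm}, and the homological stabilization of Lemma~\ref{HomologyStabilization} --- has all been absorbed into Lemma~\ref{[Z-R](2.2.17.)}. The only points requiring care are the index bookkeeping, so that $n$-connectivity of the pair yields isomorphisms precisely through degree $n-1$, and the observation that the triviality hypothesis on the $\pi_1$-action is used solely to invoke Lemma~\ref{[Z-R](2.2.17.)} and is irrelevant once the relative groups are known to vanish. I would close by noting that the same argument applies verbatim to the fixed-determinant moduli spaces $\sM_\Lambda^k(r,d)$: there $\pi_1(\sM_\Lambda^k)$ is trivial, hence acts trivially, so the hypothesis is automatic and one obtains unconditionally that $\pi_j(\sM_\Lambda^k(r,d)) \xrightarrow{\cong} \pi_j(\sM_\Lambda^\infty(r,d))$ for $j \leq n-1$ and $k \geq k_0$, which is the main result announced in the introduction.
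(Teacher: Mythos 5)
Your proposal is correct and matches the paper's intended argument: the corollary is stated without proof (marked as immediate), and the justification is exactly the one you give --- Lemma~\ref{[Z-R](2.2.17.)} makes the pair $(\sM^\infty,\sM^k)$ $n$-connected for $k\geq k_0$, and the long exact homotopy sequence of the pair then forces $\pi_j(\sM^k)\to\pi_j(\sM^\infty)$ to be an isomorphism for $j\leq n-1$. Your index bookkeeping (needing $j+1\leq n$) and your closing remark that the fixed-determinant case makes the $\pi_1$-hypothesis automatic are both consistent with how the paper proceeds in Section~\ref{ssec:4.2}.
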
  

\subsection{Fixed determinant case} 
\label{ssec:4.2}

The main goal here, is to avoid the hypothesis of the trivial action 
of the fundamental group on the relative homotopy group:
$\pi_1(\sM^{k}) \Circlearrowright \pi_n(\sM^\infty, \sM^{k})$. So, we
want to get the analogue of Lemma \ref{[Z-R](2.2.17.)} for $\sM_{\Lambda}^{k}$,
the moduli space of $k$-Higgs bundles with fixed determinant, since 
$\sM_{\Lambda}^{k}$ is simply connected. To do that, we will need the 
analogue of Corollary \ref{SurjectiveCohomology}, 
and then the analogue of Lemma \ref{HomologyStabilization} also for $\sM_{\Lambda}^k$.

The analogue of Corollary \ref{SurjectiveCohomology} for $\sM_{\Lambda}^k$ is not 
immediate. Note that the group of $r$-torsion points in the Jacobian:
$$
\Gamma=\Jac(r):=
\big\{ 
L \to X\ \textmd{line bundle}:\ L^r \cong \mathcal{O}_X
\big\}
$$
acts on $\sM_{\Lambda}^k(r,d)$ by tensorization:
$$
(E,\Phi^k) \mapsto (E\otimes L, \Phi^k \otimes \id_L).
$$

Hence, $\Gamma$ acts on $H^*(\sM_{\Lambda}^k,\bZ)$ for all $k$. This cohomology splits in a 
$\Gamma$-invariant part and in a complement which is called by Hausel and Thaddeus 
\cite{hath3} as the ``\emph{variant part}'':

\begin{equation}\label{jac_decomposition}
 H^*(\sM_{\Lambda}^k,\bZ) = H^*(\sM_{\Lambda}^k,\bZ)^\Gamma \oplus H^*(\sM_{\Lambda}^k,\bZ)^{var}.
\end{equation}

This decomposition appears in various cohomology calculations, see e.g., Hitchin \cite{hit2} 
for rank two, Gothen \cite{got} for rank three, Hausel \cite{hau} also for rank two, Bento \cite{ben} 
for the explicit calculations for rank two and rank three, and Hausel and Thaddeus \cite{hath3} 
for general rank.

The analogue of Corollary \ref{SurjectiveCohomology} for $\sM_{\Lambda}^k$ will be obtained for each 
of the pieces in the last direct sum (\ref{jac_decomposition}) separately:

\begin{itemize}
 \item For $H^*(\sM_{\Lambda}^k,\bZ)^\Gamma$:
 
 It follows from the corresponding result for $H^*(\sM^k,\bZ)$ 
 because there is a surjection $H^*(\sM^k,\bZ) \twoheadrightarrow H^*(\sM_{\Lambda}^k,\bZ)^\Gamma$.
 
Recall that, for general rank $r$, the moduli space of stable vector bundles corresponds 
to the first critical submanifold:
$
F_0 = f^{-1}(c_{0}) = f^{-1}(0) = \sN(r,d)
$. 
The group $\Gamma$ acts trivially on $H^* (\sN,\bZ)$, and there is a surjection
$$
H^*(\sN,\bZ)\twoheadrightarrow H^*(\sN_{\Lambda},\bZ).
$$
The reader may see Atiyah and Bott \cite[Prop.~ 9.7.]{atbo} for the details.

 For the rank $r = 2$ case, a nontrivial critical submanifold of $\sM_{\Lambda}^k(2,1)$, is 
 a so-called $(1,1)$-VHS:
{\footnotesize
 $$
  F_{d_1}^k(\Lambda) = 
  \Bigg\{(E,\Phi^k) = (E_1 \oplus E_2, 
  \left(
    \begin{smallmatrix}
	0 & 0\\ 
	\varphi_{21}^k & 0
    \end{smallmatrix}
  \right)
  )
  \Bigg| 
  \begin{array}{c}
   \begin{array}{c c}
    \deg(E_j) = d_j, & \rk(E_j) = 1,\\ 
   \end{array}\\
   \varphi_{21}^k : E_1\to E_2  \ox K(kp),\\
   E_1 E_2 = \Lambda
  \end{array}
 \Bigg\},
 $$
 }
 which is a $2^{2g}$-covering with covering group the $2$-torsion points in the Jacobian 
 $\Gamma \cong (\bZ_2)^{2g}$. Hence, the results of Betti numbers presented by Bento 
 \cite[Prop.~2.2.3.]{ben} let us conclude the following:
 
 \begin{Prop}
  The cohomology map 
  $$
  H^*\big(\Sym^m(X),\bZ \big) \to H^*\big(F^k_{d_1}(\Lambda),\bZ \big)
  $$
  induced by the $\Gamma$-covering
  $
  F_{d_1}^k(\Lambda) \to \Sym^m(X)
  $
  where $m = d_2 -d_1 + 2g - 2 + k$, is injective, and its image is the $\Gamma$-invariant subgroup 
  $H^*\big(F_{d_1}^k(\Lambda),\bZ \big)^{\Gamma}$.\QEDA
 \end{Prop}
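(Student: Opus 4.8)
The plan is to combine the classical transfer argument for the finite covering $\pi\colon F\to\Sym^m(X)$ --- where I abbreviate $F=F^k_{d_1}(\Lambda)$ and $m=d_2-d_1+2g-2+k$ --- with the explicit Betti-number and torsion computations for $F$ obtained by Bento \cite[Prop.~2.2.3.]{ben}. The transfer yields the statement over $\mathbb{Q}$ essentially for free; Bento's computation is what promotes it to $\bZ$-coefficients.

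First I would make the covering concrete. Sending a $(1,1)$-VHS $(E_1\oplus E_2,\Phi^k)$ to the divisor of its Higgs field $\varphi^k_{21}$ realizes $\pi\colon F\to\Sym^m(X)$ as a finite \'etale Galois covering with group $\Gamma=\Jac(2)\cong(\bZ_2)^{2g}$; a deck transformation $\gamma\in\Gamma$ acts by $(E_1\oplus E_2,\Phi^k)\mapsto(E_1\ox\gamma\oplus E_2\ox\gamma,\Phi^k)$, and this fixes the divisor because the two copies of $\gamma$ cancel inside $\Hom\!\big(E_1,E_2\ox K(kp)\big)$ (recall $\gamma^{\ox 2}\cong\sO_X$). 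Being a finite covering, $\pi$ carries a transfer homomorphism $\tau\colon H^*(F,\bZ)\to H^*(\Sym^m(X),\bZ)$ with $\tau\circ\pi^*=|\Gamma|\cdot\id$ and $\pi^*\circ\tau=\sum_{\gamma\in\Gamma}\gamma^*$.

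Over $\mathbb{Q}$, these two identities show in the usual way that $\tfrac1{|\Gamma|}\tau_{\mathbb{Q}}$ is a left inverse of $\pi^*_{\mathbb{Q}}$ whose image is precisely the $\Gamma$-fixed subspace, so $\pi^*_{\mathbb{Q}}\colon H^*(\Sym^m(X),\mathbb{Q})\xrightarrow{\ \cong\ }H^*(F,\mathbb{Q})^{\Gamma}$. Since $H^*(\Sym^m(X),\bZ)$ is torsion free by Macdonald \cite[(12.3)]{mac}, the natural map $H^*(\Sym^m(X),\bZ)\hookrightarrow H^*(\Sym^m(X),\mathbb{Q})$ is injective, hence $\pi^*\colon H^*(\Sym^m(X),\bZ)\to H^*(F,\bZ)$ is injective as well; and $\gamma\circ\pi=\pi$ gives at once $\pi^*H^*(\Sym^m(X),\bZ)\subseteq H^*(F,\bZ)^{\Gamma}$. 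This disposes of injectivity and of the inclusion ``image $\subseteq$ $\Gamma$-invariants''.

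The remaining inclusion $H^*(F,\bZ)^{\Gamma}\subseteq\pi^*H^*(\Sym^m(X),\bZ)$ carries the real content, and I expect it to be the main obstacle. Since $|\Gamma|=2^{2g}$ is not a unit in $\bZ$, the transfer does \emph{not} split $\pi^*$ over $\bZ$: dividing by $|\Gamma|$ only shows that $\pi^*H^*(\Sym^m(X),\bZ)$ has finite index in $H^*(F,\bZ)^{\Gamma}$, and that this index is $1$ in each degree must be argued by hand. Here I would appeal to Bento \cite[Prop.~2.2.3.]{ben}, whose computation establishes that $H^*(F^k_{d_1}(\Lambda),\bZ)$ is torsion free and pins down its Betti numbers (and enough of its structure) explicitly; from it one checks that $H^j(F,\bZ)^{\Gamma}$ has the same rank as $H^j(\Sym^m(X),\bZ)$ for every $j$ and that a generating set of $H^*(F,\bZ)^{\Gamma}$ is furnished by the images under $\pi^*$ of the standard generators of $H^*(\Sym^m(X),\bZ)$. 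Granting this, $\pi^*$ is an injection of finitely generated torsion-free abelian groups onto $H^*(F^k_{d_1}(\Lambda),\bZ)^{\Gamma}$, i.e.\ the asserted isomorphism. In short, everything up to this point is the textbook transfer machinery; the single substantial ingredient --- the one I am assuming rather than reproving --- is Bento's torsion-free, explicit description of $H^*(F^k_{d_1}(\Lambda),\bZ)$, which is exactly what forces the index down to $1$.
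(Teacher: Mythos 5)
Your proposal is correct and follows essentially the same route as the paper, which offers no argument beyond invoking Bento's Prop.~2.2.3 for this $(\bZ_2)^{2g}$-covering $F^k_{d_1}(\Lambda)\to\Sym^m(X)$. Your explicit transfer scaffolding and your isolation of the genuine integral issue (that $\tau$ only forces the index of $\pi^*H^*(\Sym^m(X),\bZ)$ in $H^*(F^k_{d_1}(\Lambda),\bZ)^{\Gamma}$ to be finite, so Bento's torsion-free computation is what closes the gap) is a more careful unpacking of exactly the citation the paper relies on.
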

 
 \begin{Cor}\label{SurjGammaInv2}
  There exists a surjection
  $$
  H^*\big(\sM^k(2,1),\bZ \big)\twoheadrightarrow H^*\big(\sM_{\Lambda}^k(2,1),\bZ \big)^{\Gamma}.
  $$
  \QEDA
 \end{Cor}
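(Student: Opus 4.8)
The plan is to establish the surjection one Morse stratum at a time. First I would record that $\sM_{\Lambda}^{k}(2,1)$ is the fibre $\zeta^{-1}(\Lambda,0)$ of the $\bS^1$-equivariant map $\zeta=(\det,\tr)$, so it is an $\bS^1$-invariant submanifold of $\sM^k(2,1)$, and that the $\Gamma$-action on $\sM_{\Lambda}^{k}(2,1)$ commutes with the circle action and preserves the Hitchin function $f$ of~(\ref{momentum_map_intro}), since tensoring by a torsion line bundle does not change $\|\Phi\|_{L^2}$. Because $\GCD(2,1)=1$, $f$ is a perfect Morse--Bott function on $\sM^k(2,1)$ -- this is the mechanism underlying Theorem~\ref{TorsionFreeRankTwoAndThree} -- and its restriction to $\sM_{\Lambda}^{k}(2,1)$ is likewise perfect, with critical submanifolds $\sN_{\Lambda}(2,1)$ and the $(1,1)$-VHS loci $F_{d_1}^k(\Lambda)$. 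The inclusion $\iota\:\sM_{\Lambda}^{k}(2,1)\hookrightarrow\sM^k(2,1)$ matches $\sN_{\Lambda}(2,1)$ with $\sN(2,1)$ and $F_{d_1}^k(\Lambda)$ with $F_{d_1}^k$, and the Morse indices agree: the normal bundle of $\sM_{\Lambda}^{k}(2,1)$ in $\sM^k(2,1)$ carries non-negative $\bS^1$-weights -- the Jacobian directions are $\bS^1$-fixed and are absorbed into the larger critical submanifold $F_{\la}^k\supset F_{\la}^k(\Lambda)$, while the trace directions $H^0(X,K(kp))$ carry weight $+1$ -- so the negative normal bundle of $F_{d_1}^k(\Lambda)$ in $\sM_{\Lambda}^{k}(2,1)$ is the restriction of that of $F_{d_1}^k$ in $\sM^k(2,1)$.

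Consequently, by perfectness and the torsion-freeness of the critical submanifolds (Theorem~\ref{TorsionFreeRankTwoAndThree} and the preceding Proposition), there are non-canonical additive isomorphisms
$$
H^*\big(\sM^k(2,1),\bZ\big)\ \cong\ H^*\big(\sN(2,1),\bZ\big)\ \oplus\ \bigoplus_{d_1}H^{*-\nu_{d_1}}\big(F_{d_1}^k,\bZ\big),
$$
$$
H^*\big(\sM_{\Lambda}^{k}(2,1),\bZ\big)\ \cong\ H^*\big(\sN_{\Lambda}(2,1),\bZ\big)\ \oplus\ \bigoplus_{d_1}H^{*-\nu_{d_1}}\big(F_{d_1}^k(\Lambda),\bZ\big),
$$
with the same shifts $\nu_{d_1}$ (the real rank of the negative normal bundle of the $d_1$-stratum), arranged compatibly with $\iota^*$ and $\Gamma$-equivariantly. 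The $\Gamma$-action on the first splitting is trivial, because $\Gamma\subset\Jac^0(X)$ and $\Jac^0(X)$ is connected, so in particular $\iota^*$ lands in the $\Gamma$-invariant part. It therefore suffices to exhibit, summand by summand, a surjection onto the $\Gamma$-invariants downstairs. For the $\sN$-summand this is {Atiyah and Bott \cite[Prop.~9.7.]{atbo}}: $\Gamma$ acts trivially on $H^*(\sN(2,1),\bZ)$ and $H^*(\sN(2,1),\bZ)\twoheadrightarrow H^*(\sN_{\Lambda}(2,1),\bZ)^{\Gamma}$. For a $(1,1)$-VHS summand, use $F_{d_1}^k\cong\sJ^{d_1}(X)\times\Sym^m(X)$ (Hitchin~\cite{hit2}); then $H^*(F_{d_1}^k,\bZ)=H^*(\sJ^{d_1}(X),\bZ)\ox H^*(\Sym^m(X),\bZ)$ surjects onto $H^*(\Sym^m(X),\bZ)$ by restriction to $\{\mathrm{pt}\}\times\Sym^m(X)$, and by the preceding Proposition the $\Gamma$-covering $F_{d_1}^k(\Lambda)\to\Sym^m(X)$ identifies $H^*(\Sym^m(X),\bZ)$ with $H^*(F_{d_1}^k(\Lambda),\bZ)^{\Gamma}$; one only needs to check that under $F_{d_1}^k\cong\sJ^{d_1}(X)\times\Sym^m(X)$ the composite $F_{d_1}^k(\Lambda)\hookrightarrow F_{d_1}^k\to\Sym^m(X)$ is that covering, which holds because on $F_{d_1}^k(\Lambda)$ the sub-line-bundle $E_1$ satisfies $E_1^{\ox 2}\cong\Lambda\ox K(kp)\ox\sO_X\big(-\mathrm{div}(\varphi_{21}^k)\big)$ and hence is determined by $\mathrm{div}(\varphi_{21}^k)$ up to an element of $\Gamma$.

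Finally I would assemble these over all strata: $\iota^*$ respects the Morse filtrations of the two moduli spaces, its associated graded is the direct sum of the stratum maps above, and since $H^*(\sM_{\Lambda}^{k}(2,1),\bZ)^{\Gamma}$ is a direct summand of $H^*(\sM_{\Lambda}^{k}(2,1),\bZ)$ by~(\ref{jac_decomposition}), surjectivity onto the invariant part on the associated graded forces it for $\iota^*$ itself. I expect the main obstacle to be exactly the integral bookkeeping in the first two paragraphs: over $\mathbb{Q}$ the corollary follows in one line from $\sM^k(2,1)\simeq\big(\sM_{\Lambda}^{k}(2,1)\times\Jac^0(X)\times H^0(X,K(kp))\big)/\Gamma$, but over $\bZ$ one must verify that the two Morse stratifications have genuinely matching indices and orientable negative normal bundles, that the $\Gamma$-invariant parts of the strata cohomology are torsion free, and that $f$ is $\Gamma$-equivariantly perfect on $\sM_{\Lambda}^{k}(2,1)$, so that the filtrations line up -- which is why the naive analogue of Corollary~\ref{SurjectiveCohomology} is not available and the variant cohomology of $\sM_{\Lambda}^{k}$ has to be set aside by hand.
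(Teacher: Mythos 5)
Your proposal is correct and is essentially the argument the paper intends: the paper likewise reduces the surjection to the critical submanifolds of the Hitchin function, handling $F_0=\sN$ via Atiyah--Bott \cite[Prop.~9.7]{atbo} and each $(1,1)$-VHS via the $\Gamma$-covering $F_{d_1}^k(\Lambda)\to\Sym^m(X)$ together with the identification $F_{d_1}^k\cong\sJ^{d_1}(X)\times\Sym^m(X)$. The only difference is one of explicitness: the paper leaves the Morse--Bott assembly (perfectness over $\bZ$, matching of indices, compatibility of the filtrations with restriction and with the $\Gamma$-action) implicit, whereas you spell those checks out.
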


 When $r = 3$, the group of $3$-torsion points in the Jacobian looks like $\Gamma \cong (\bZ_3)^{2g}$, and 
 the nontrivial critical submanifolds of $\sM^k_{\Lambda}(3,d)$ are VHS either of type $(1,2),\ (2,1)$ or 
 $(1,1,1)$, where the cohomology of the $(1,2)$ and $(2,1)$ VHS is invariant under the action of $\Gamma$, 
 and the $(1,1,1)$-VHS is a $3^{2g}$-covering of $\Sym^{m_1}(X)\times \Sym^{m_2}(X)$ with covering group 
 $\Gamma \cong (\bZ_3)^{2g}$. Hence:
 
 \begin{Prop}
  $$
  H^*\big(F_{d_1}^k(\Lambda),\bZ \big)
  =
  H^*\big(F_{d_1}^k(\Lambda),\bZ \big)^{\Gamma}\
  \textmd{and}\quad
  H^*\big(F_{d_2}^k(\Lambda),\bZ \big)
  =
  H^*\big(F_{d_2}^k(\Lambda),\bZ \big)^{\Gamma}
  $$
  where
  {\footnotesize
  $$
  F_{d_1}^k(\Lambda)
  = 
  \Bigg\{(E,\Phi^k) = (E_1 \oplus E_2, 
		  \left( \begin{array}{c c}
			    0 & 0\\ 
			    \varphi_{21}^k & 0
			 \end{array}
		 \right)
  )\Bigg| 
  \begin{array}{c}
    \begin{array}{c c}
      \deg(E_1) = d_1, & \deg(E_2) = d_2,\\ 
      \rk(E_1) = 1, & \rk(E_2) = 2,\\
    \end{array}\\
    \varphi_{21}^k : E_1\to E_2  \ox K(kp)\\
    E_1 E_2 = \Lambda
  \end{array}
  \Bigg\}
  $$
  }
  and 
  {\footnotesize
  $$
  F_{d_2}^k(\Lambda)
  = 
  \Bigg\{
    (E,\Phi^k) = (E_2 \oplus E_1, \left( 
				    \begin{array}{c c}
				    0 & 0\\ 
				    \varphi_{21}^k & 0
				    \end{array}	
				  \right)
  )\Bigg| 
   \begin{array}{c}
   \begin{array}{c c}
    \deg(E_2) = d_2, & \deg(E_1) = d_1,\\ 
    \rk(E_2) = 2, & \rk(E_1) = 1,\\
   \end{array}\\
   \varphi_{21}^k : E_2\to E_1  \ox K(kp)\\
   E_2 E_1 = \Lambda
   \end{array}
  \Bigg\}
  $$
  }
  are the $(1,2)$ and $(2,1)$-VHS of $\sM^k_ {\Lambda}(3,d)$ respectively, with 
  $$
  \frac{d}{3}\leq d_1\leq  \frac{d}{3} + \frac{2g-2+k}{2}\
  and\quad  
  \frac{2d}{3}\leq d_2\leq \frac{2d}{3} + \frac{2g-2+k}{2}.
  $$
  Furthermore:
  $$
  H^*\big(F_{m_1 m_2}^k(\Lambda),\bZ \big) = H^*\big(F_{m_1 m_2}^k(\Lambda),\bZ \big)^{\Gamma}\oplus H^*\big(F_{m_1 m_2}^k(\Lambda),\bZ \big)^{var}
  $$
  and the cohomology map 
  $$
  H^*\big(\Sym^{m_1}(X)\times \Sym^{m_2}(X),\bZ \big) \to H^*\big(F^k_{m_1 m_2}(\Lambda),\bZ \big)
  $$
  induced by the $\Gamma$-covering
  $
  F_{m_1 m_2}^k(\Lambda) \to \Sym^{m_1}(X)\times \Sym^{m_2}(X)
  $
  where
  $
  F^k_{m_1 m_2}(\Lambda) = 
  $
  {\footnotesize
  $$
    \Bigg\{(E,\Phi^k) = 
    (E_1 \oplus E_2 \oplus E_3, \left( 
				  \begin{array}{c c c}
				    0 & 0 & 0\\ 
				    \varphi^k_{21} & 0 & 0\\ 
				    0 & \varphi^k_{32} & 0
				  \end{array}
				\right) )\Bigg| 
  \begin{array}{c}
  \deg(E_j) = d_j,\ rk(E_j) = 1,\\
  \varphi_{ij} : E_j \to E_i  \ox K(kp)\\
  E_1 E_2 E_3 = \Lambda
  \end{array}
  \Bigg\},
  $$
}
  is the $(1,1,1)$-VHS of $\sM^k_ {\Lambda}(3,d)$ with $m_j = d_{j+1} -d_j + 2g - 2 + k$, 
  is injective, and its image is the $\Gamma$-invariant subgroup $H^*\big(F_{m_1 m_2}^k(\Lambda) \big)^{\Gamma}$.\QEDA
 \end{Prop}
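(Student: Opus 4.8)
The plan is to treat the three families of critical submanifolds in turn, in each case using the explicit model already recalled --- a moduli space of $\sg$-stable triples (Bradlow, Garc\'ia-Prada and Gothen \cite{bgg1}; Mu\~noz, Ortega and V\'azquez-Gallo \cite{mov}), resolved through its flip loci so that Lemma~\ref{BlowUpLemma} applies, together with Macdonald's computation \cite{mac} of the cohomology of symmetric products --- and keeping track throughout of the $\Gamma=\Jac(3)$-action $(E,\Phi^k)\mapsto(E\ox L,\Phi^k\ox\id_L)$, $L^{3}\cong\sO_X$. The governing observation is that this action is induced by tensoring line bundles, so on each Jacobian factor of a model it is a \emph{translation}, on each symmetric-product factor it is \emph{trivial} (tensoring does not move the zero divisor of a twisted homomorphism), and on each projective bundle it is the equivariant lift of the action on the base. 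Since translations of an abelian variety are homotopic to the identity, the only mechanism that produces a non-trivial action on cohomology is a finite covering whose base is \emph{not} an abelian variety and whose monodromy meets $\Gamma$; this happens for the $(1,1,1)$-VHS and, as we now explain, fails for the $(1,2)$- and $(2,1)$-VHS.

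For $F_{d_1}^k(\Lambda)$, the $(1,2)$-VHS, the determinant condition recorded in the statement is the \emph{affine-linear} relation $E_1\ox\det(E_2)=\Lambda$ (with $\rk E_1=1$): it fixes $\det(E_2)$ as a function of $E_1$ with no root extraction, and the rank-two bundle $E_2$ retains a Jacobian parameter $\det(E_2)$ that is not pinned down by a divisor. Imposing this relation on the Mu\~noz--Ortega--V\'azquez-Gallo description of $\sN_{\sg_H(k)}(2,1,\tilde{d}_1,\tilde{d}_2)$ and resolving the flip loci by Lemma~\ref{BlowUpLemma}, one exhibits $F_{d_1}^k(\Lambda)$ as assembled from strata, each of which fibres over a symmetric product of $X$ (recording the divisor of $\varphi_{21}^k$) with fibre a projective bundle over a Jacobian of $X$; the $\Gamma$-action fixes the base point and acts on each fibre by translation in that Jacobian, the only finite covering occurring in the fixed-determinant reduction being the multiplication-by-$2$ isogeny of a Jacobian, whose deck group $\Jac(2)$ meets $\Gamma$ only in the identity. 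Hence $\Gamma$ acts trivially on the cohomology of every stratum and, by the $\Gamma$-equivariance of the decomposition in Lemma~\ref{BlowUpLemma}, on $H^*(F_{d_1}^k(\Lambda),\bZ)$, giving $H^*(F_{d_1}^k(\Lambda),\bZ)=H^*(F_{d_1}^k(\Lambda),\bZ)^{\Gamma}$. The identity for $F_{d_2}^k(\Lambda)$, the $(2,1)$-VHS, follows in the same way, or by transporting it through the $\Gamma$-equivariant duality isomorphism $\sN_{\sg_H(k)}(2,1,\tilde{d}_1,\tilde{d}_2)\cong\sN_{\sg_H(k)}(1,2,-\tilde{d}_2,-\tilde{d}_1)$ recorded above; both identities are moreover visible in the integral Betti numbers computed by Bento \cite{ben}.

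For the $(1,1,1)$-VHS, eliminating $E_2$ and $E_1$ via $E_2\cong E_3\ox K(kp)\ox\sO_X(-\mathrm{div}(\varphi_{32}^k))$ and $E_1\cong E_3\ox K(kp)^{2}\ox\sO_X(-\mathrm{div}(\varphi_{21}^k)-\mathrm{div}(\varphi_{32}^k))$ turns the constraint $E_1E_2E_3=\Lambda$ into the \emph{cubic} relation $E_3^{3}\cong\Lambda\ox K(kp)^{-3}\ox\sO_X\big(\mathrm{div}(\varphi_{21}^k)+2\,\mathrm{div}(\varphi_{32}^k)\big)$, so the two divisors determine $E_3$ only up to an element of $\Jac(3)=\Gamma$; equivalently $F_{m_1m_2}^k(\Lambda)$ is the fibre product of $\Sym^{m_1}(X)\times\Sym^{m_2}(X)$ with the multiplication-by-$3$ isogeny of a Jacobian. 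This proves that the forgetful map $p\colon F_{m_1m_2}^k(\Lambda)\to\Sym^{m_1}(X)\times\Sym^{m_2}(X)$, $(E,\Phi^k)\mapsto(\mathrm{div}(\varphi_{21}^k),\mathrm{div}(\varphi_{32}^k))$, is a regular $3^{2g}$-sheeted covering with deck group $\Gamma$ acting by $E_3\mapsto E_3\ox L$, as asserted, and that its monodromy $\pi_1\big(\Sym^{m_1}(X)\times\Sym^{m_2}(X)\big)\to\Gamma$ is surjective, so the covering is connected and the variant part is non-zero. Since $H^*\big(\Sym^{m_1}(X)\times\Sym^{m_2}(X),\bZ\big)$ is torsion free (Macdonald \cite{mac} and the K\"unneth formula), the transfer homomorphism shows that $p^*$ is injective over $\bZ$ with image contained in $H^*\big(F_{m_1m_2}^k(\Lambda),\bZ\big)^{\Gamma}$; over $\mathbb{Q}$ the averaging idempotent $\frac{1}{|\Gamma|}\sum_{\gamma\in\Gamma}\gamma^*$ identifies this image with the whole $\Gamma$-invariant subspace, and the integral equality $\mathrm{im}\,p^{*}=H^*\big(F_{m_1m_2}^k(\Lambda),\bZ\big)^{\Gamma}$, together with the existence of the complementary direct summand $H^*\big(F_{m_1m_2}^k(\Lambda),\bZ\big)^{var}$, is supplied by the explicit (and torsion-free) cohomology computations of Bento \cite{ben} and Hausel and Thaddeus \cite{hath3}.

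The delicate point --- and the reason those explicit computations are indispensable rather than decorative --- is the passage from rational to integral coefficients in the last step: one may not invert $|\Gamma|=3^{2g}$ over $\bZ$, so neither the exact identification of $\mathrm{im}\,p^{*}$ with the $\Gamma$-invariant subgroup nor the integral direct-sum decomposition $H^*\big(F_{m_1m_2}^k(\Lambda),\bZ\big)=H^*\big(F_{m_1m_2}^k(\Lambda),\bZ\big)^{\Gamma}\oplus H^*\big(F_{m_1m_2}^k(\Lambda),\bZ\big)^{var}$ can be extracted from the soft transfer/averaging argument alone; they rest on knowing the integral cohomology of these covers of symmetric products, in particular its torsion-freeness. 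A secondary, bookkeeping obstacle is to check in the $(1,2)$- and $(2,1)$-cases that the Mu\~noz--Ortega--V\'azquez-Gallo flip-loci stratification is genuinely $\Gamma$-equivariant and that the residual $\Gamma$-action on each stratum reduces to a translation in a Jacobian factor; this forces one to unwind their construction rather than invoke it as a black box.
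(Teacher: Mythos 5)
The paper does not actually prove this Proposition: it is stated with a closing $\boxminus$ and the reader is simply referred to Bento, Gothen, and Hausel--Thaddeus for the details. Your reconstruction fills in the geometric skeleton that those references supply, and it is essentially correct: for the $(1,2)$- and $(2,1)$-components the point is indeed that the fixed-determinant condition is linear in the line-bundle parameter, so $\Gamma$ acts through simultaneous translations on the Jacobian factors of the Mu\~noz--Ortega--V\'azquez-Gallo strata (lifting to the tautological classes of the projective bundles), and translations of a torus are homotopic to the identity; for the $(1,1,1)$-component your elimination of $E_1,E_2$ yielding $E_3^3\cong\Lambda\ox K(kp)^{-3}\ox\sO_X(D_1+2D_2)$ correctly exhibits the $3^{2g}$-covering, its connectedness via surjectivity of $(a,b)\mapsto a+2b$ on $H_1$, and the injectivity of $p^*$ by transfer against the torsion-free base. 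Two remarks. First, the ``multiplication-by-2 isogeny'' you invoke in the $(1,2)$-case is spurious: imposing $E_1\det(E_2)=\Lambda$ cuts the $\sJ^{d_M}\times\sJ^{\tilde d_2}$ base of each flip locus along a single linear relation, leaving a torsor over a Jacobian with $\Gamma$ acting by translation, so no covering (and hence no deck-group intersection argument) arises there at all; the remark is harmless but does no work. Second, you are right that the \emph{integral} identification $\mathrm{im}\,p^*=H^*(F^k_{m_1m_2}(\Lambda),\bZ)^{\Gamma}$ and the integral splitting off of the variant part cannot be extracted from the transfer/averaging argument, since $|\Gamma|=3^{2g}$ is not invertible over $\bZ$; these must be quoted from the explicit computations of Bento and Hausel--Thaddeus --- which is precisely, and only, what the paper itself does. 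In short, your proposal is a correct and strictly more informative account than the paper's citation-only treatment, modulo the equivariance bookkeeping for the flip-loci stratification that you yourself flag.
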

 
 \begin{Cor}\label{SurjGammaInv3}
  There exists a surjection
  $$
  H^*\big(\sM^k(3,d),\bZ \big)\twoheadrightarrow H^*\big(\sM_{\Lambda}^k(3,d),\bZ \big)^{\Gamma}.
  $$
  \QEDA
 \end{Cor}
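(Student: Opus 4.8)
The plan is to replay, in rank three, the argument behind Corollary~\ref{SurjGammaInv2}, feeding in the three families of critical submanifolds of $\sM^k(3,d)$ together with the identifications of their $\Gamma$-invariant cohomology recorded in the Proposition immediately above. Since $\GCD(3,d)=1$, the Hitchin momentum map $f$ of~(\ref{momentum_map_intro}) is a perfect Morse--Bott function on $\sM^k(3,d)$ and, equally, on the fixed-determinant slice $\sM^k_{\Lambda}(3,d)=\zeta^{-1}(\Lambda,0)$. Hence each integral cohomology splits additively over the critical values $\la$,
\begin{align*}
 H^*\big(\sM^k(3,d),\bZ\big) &= H^*(\sN,\bZ)\oplus\bigoplus_{\la}H^{*-\mu_\la}\big(F^k_\la,\bZ\big),\\
 H^*\big(\sM^k_{\Lambda}(3,d),\bZ\big) &= H^*(\sN_{\Lambda},\bZ)\oplus\bigoplus_{\la}H^{*-\mu_\la(\Lambda)}\big(F^k_\la(\Lambda),\bZ\big),
\end{align*}
and, because the $\Gamma=\Jac(3)$-action by tensorisation commutes with $f$ and preserves the stratification, the second splitting is $\Gamma$-equivariant and passes to $\Gamma$-invariants summand by summand.

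First I would note that $\Gamma$ acts trivially on $H^*\big(\sM^k(3,d),\bZ\big)$: it is contained in the connected group $\sJ^0(X)$, which itself acts on $\sM^k(3,d)$ by $(E,\Phi^k)\mapsto(E\ox L,\Phi^k\ox\id_L)$, so every element of $\Gamma$ acts homotopically trivially. Consequently the inclusion $\iota\:\sM^k_{\Lambda}(3,d)\hookrightarrow\sM^k(3,d)$, which is $\Gamma$-equivariant and restricts on the stratum indexed by $\la$ to $F^k_\la(\Lambda)\hookrightarrow F^k_\la$, induces a map $\iota^*$ whose image already lies in the $\Gamma$-invariant subgroup. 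It then remains to check that $\iota^*$ maps onto $H^*\big(\sM^k_{\Lambda}(3,d),\bZ\big)^\Gamma$, and by compatibility with the two Morse splittings it suffices to verify surjectivity on each critical submanifold separately.

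On $\sN$ this is the Atiyah--Bott surjection $H^*(\sN,\bZ)\twoheadrightarrow H^*(\sN_{\Lambda},\bZ)=H^*(\sN_{\Lambda},\bZ)^\Gamma$ of~\cite[Prop.~9.7]{atbo}. On the $(1,1,1)$-VHS, the non-fixed-determinant stratum is $\Sym^{m_1}(X)\times\Sym^{m_2}(X)\times\sJ^{d_3}(X)$, while by the preceding Proposition $H^*\big(F^k_{m_1m_2}(\Lambda),\bZ\big)^\Gamma\cong H^*\big(\Sym^{m_1}(X)\times\Sym^{m_2}(X),\bZ\big)$; restricting to a fibre of the Jacobian factor (K\"unneth, plus Macdonald~\cite[(12.3)]{mac} to keep everything torsion-free) yields the surjection. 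On the $(1,2)$- and $(2,1)$-VHS, which are dual to each other, the preceding Proposition gives $H^*\big(F^k_{d_1}(\Lambda),\bZ\big)=H^*\big(F^k_{d_1}(\Lambda),\bZ\big)^\Gamma$; one then exhibits $F^k_{d_1}(\Lambda)$ as the determinant fibre inside the triple moduli space $F^k_{d_1}\cong\sN_{\sg_H(k)}(2,1,\tilde d_1,\tilde d_2)$ and uses the flip-locus description of the latter (projective bundles over products of Jacobians and symmetric products, from the proof of Theorem~\ref{TorsionFreeRankTwoAndThree} and from~\cite{mov}) to produce the surjection $H^*(F^k_{d_1},\bZ)\twoheadrightarrow H^*\big(F^k_{d_1}(\Lambda),\bZ\big)^\Gamma$. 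Summing over strata gives the desired $\iota^*\:H^*(\sM^k(3,d),\bZ)\twoheadrightarrow H^*(\sM^k_{\Lambda}(3,d),\bZ)^\Gamma$.

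The main obstacle is everything packed into ``compatibility with the Morse splittings'' and ``summing over strata''. One must verify that the Morse indices agree, $\mu_\la=\mu_\la(\Lambda)$ for every $\la$; this should hold because $\sM^k(3,d)$ differs from $\sM^k_{\Lambda}(3,d)$ only in the determinant direction $H^1(X,\sO_X)$, which has $\bC^*$-weight zero and therefore enlarges the critical submanifold rather than its negative normal bundle, and in the trace direction $H^0(X,K(kp))$, which has positive weight and therefore only enlarges the positive normal bundle, but this needs to be checked carefully rather than asserted. The second delicate point is the $(1,2)$/$(2,1)$ stratum: unlike the $(1,1,1)$ stratum, $F^k_{d_1}(\Lambda)$ is not a $\Gamma$-covering of a product of symmetric products, so the surjection $H^*(F^k_{d_1})\to H^*(F^k_{d_1}(\Lambda))$ must be produced by hand from the triple-moduli and flip-locus descriptions. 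Finally, since the claim is integral, one must ensure that no $3$-torsion is introduced when passing to $\Gamma$-invariants of the fixed-determinant strata, which is exactly the role of the torsion-freeness established for these spaces in the proof of Theorem~\ref{TorsionFreeRankTwoAndThree} and in the preceding Proposition.
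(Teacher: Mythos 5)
Your argument is essentially the paper's: the corollary is stated there without proof, as an immediate consequence of the preceding Proposition, and the intended justification is exactly your stratum-by-stratum comparison — the Morse--Bott splitting over the critical submanifolds, the Atiyah--Bott surjection $H^*(\sN,\bZ)\twoheadrightarrow H^*(\sN_{\Lambda},\bZ)$ on $F_0$, the $\Gamma$-covering description of the $(1,1,1)$-VHS, and the full $\Gamma$-invariance of the $(1,2)$- and $(2,1)$-strata. The compatibility and torsion issues you flag are genuine but are left implicit in the paper as well (deferred to Bento, Gothen, and Hausel--Thaddeus), so your proposal matches the paper's route while being more explicit about where the work lies.
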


 The reader may see Bento \cite{ben}, Gothen \cite{got} and also Hausel and Thaddeus \cite{hath3}, 
 for details. Using the results above, we get:
 
\begin{Lem}\label{CohoStabFixedDetGammaInv}
 The induced cohomology homomorphism restricted to the $\Gamma$-invariant cohomology 
 of the moduli spaces of $k$-Higgs bundles with fixed determinant $\Lambda$
 $$
 i_k^*\: H^*(\sM^{k+1}_{\Lambda}(r,d),\bZ)^{\Gamma} 
 \twoheadrightarrow 
 H^*(\sM^{k}_{\Lambda}(r,d),\bZ)^{\Gamma}
 $$ 
 is surjective.
\end{Lem}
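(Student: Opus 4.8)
The plan is to reduce this to a short diagram chase built on three facts already in hand: the surjectivity of $i_k^*$ on the cohomology of the full moduli space (Corollary~\ref{SurjectiveCohomology}), and the surjections of $H^*(\sM^k(r,d),\bZ)$ onto the $\Gamma$-invariant cohomology $H^*(\sM^k_{\Lambda}(r,d),\bZ)^{\Gamma}$ from Corollary~\ref{SurjGammaInv2} (rank~$2$) and Corollary~\ref{SurjGammaInv3} (rank~$3$). Everything else is compatibility.

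First I would check that $i_k$ restricts to the fixed-determinant loci. If $(E,\Phi^k)$ has $\det(E)=\Lambda$ and $\tr(\Phi^k)=0$, then $(E,\Phi^k\ox s_p)$ still has determinant $\Lambda$ and trace $\tr(\Phi^k)\ox s_p=0$, so $\zeta(E,\Phi^k\ox s_p)=(\Lambda,0)$; hence $i_k$ restricts to an embedding $i_k\:\sM^k_{\Lambda}(r,d)\hookrightarrow\sM^{k+1}_{\Lambda}(r,d)$. I would also record two further compatibilities, both immediate from the formula defining $i_k$: it is $\Gamma$-equivariant, since $(E\ox L,\Phi^k\ox\id_L)\mapsto(E\ox L,(\Phi^k\ox s_p)\ox\id_L)$, so $i_k^*$ carries $\Gamma$-invariant classes to $\Gamma$-invariant classes; and it commutes with the inclusions $\iota_k\:\sM^k_{\Lambda}(r,d)\hookrightarrow\sM^k(r,d)$, i.e. $i_k\circ\iota_k=\iota_{k+1}\circ i_k$ as maps $\sM^k_{\Lambda}\to\sM^{k+1}$.

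Then I would run the chase on the square
$$
\begin{xy}
 (0,20)*+{H^*(\sM^{k+1},\bZ)}="a";
 (65,20)*+{H^*(\sM^{k+1}_{\Lambda},\bZ)^{\Gamma}}="b";
 (0,0)*+{H^*(\sM^{k},\bZ)}="c";
 (65,0)*+{H^*(\sM^{k}_{\Lambda},\bZ)^{\Gamma}}="d";
 {\ar@{->>}^{\iota_{k+1}^*} "a";"b"};
 {\ar@{->>}_{i_k^*} "a";"c"};
 {\ar@{->}^{i_k^*} "b";"d"};
 {\ar@{->>}_{\iota_{k}^*} "c";"d"};
\end{xy}
$$
whose horizontal maps are the restriction homomorphisms along $\iota_k$, $\iota_{k+1}$; these are exactly the surjections of Corollaries~\ref{SurjGammaInv2} and~\ref{SurjGammaInv3}, their images landing in the $\Gamma$-invariant parts. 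Commutativity is just functoriality of pullback applied to $i_k\circ\iota_k=\iota_{k+1}\circ i_k$. Now the bottom arrow $\iota_k^*$ is onto $H^*(\sM^k_{\Lambda},\bZ)^{\Gamma}$ (Corollary~\ref{SurjGammaInv2}/\ref{SurjGammaInv3}) and the left arrow $i_k^*$ is surjective (Corollary~\ref{SurjectiveCohomology}), so the composite $i_k^*\circ\iota_{k+1}^*=\iota_k^*\circ i_k^*$ is onto $H^*(\sM^k_{\Lambda},\bZ)^{\Gamma}$; since a composite can be surjective only if its last map is, the right arrow $i_k^*\: H^*(\sM^{k+1}_{\Lambda},\bZ)^{\Gamma}\to H^*(\sM^{k}_{\Lambda},\bZ)^{\Gamma}$ is surjective, which is the assertion.

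The argument is formal once the inputs are in place, so the one point I would be careful about is that the surjections furnished by Corollaries~\ref{SurjGammaInv2} and~\ref{SurjGammaInv3} are genuinely the restriction maps $\iota_k^*$ (hence natural in $k$ with respect to $i_k$), and not merely abstract surjections; this is really a statement about the compatibility of the $\Gamma$-coverings of the critical submanifolds $F^k_\bullet(\Lambda)$, and of the Morse-theoretic decomposition of $H^*(\sM^k)$, with the embeddings $i_k$ — the same type of compatibility already exploited in Proposition~\ref{embChernClasses}. I would also emphasise that restricting to the $\Gamma$-invariant summand of~(\ref{jac_decomposition}) is essential here: $H^*(\sM^k,\bZ)$ does not surject onto the variant part, so the stabilization of $H^*(\sM^k_{\Lambda},\bZ)^{var}$ cannot be obtained in this way and must be handled by a separate argument.
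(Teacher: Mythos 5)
Your proposal is correct and follows essentially the same route as the paper: the paper's proof is precisely the commutative square with the surjection $i_k^*\colon H^*(\sM^{k+1},\bZ)\twoheadrightarrow H^*(\sM^{k},\bZ)$ from Corollary~\ref{SurjectiveCohomology} and the vertical surjections onto the $\Gamma$-invariant parts from Corollaries~\ref{SurjGammaInv2} and~\ref{SurjGammaInv3}, concluding by the same ``surjective composite forces the last map surjective'' chase. Your added checks --- that $i_k$ preserves the fibre $\zeta^{-1}(\Lambda,0)$, is $\Gamma$-equivariant, and that the vertical surjections are genuinely the restriction maps $\iota_k^*$ so the square commutes --- are details the paper leaves implicit, and are worth recording.
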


\begin{proof}
 It is enough to note that the following diagram
 \begin{align}
 \begin{xy}
  (0,25)*+{H^*(\sM^{k+1},\bZ)}="a";
  (0,0)*+{H^*(\sM^{k+1}_{\Lambda},\bZ)^{\Gamma}}="b";
  (40,25)*+{H^*(\sM^{k},\bZ)}="c";
  (40,0)*+{H^*(\sM^{k}_{\Lambda},\bZ)^{\Gamma}}="d";
  {\ar@{->>} "a";"b"};
  {\ar@{->>}^{i_k^{*}} "a";"c"};
  {\ar@{->>} "c";"d"};
  {\ar@{-->}_{i_k^{*}} "b";"d"};
 \end{xy}
\end{align}
 commutes, where the top arrow is surjective by Corollary \ref{SurjectiveCohomology},
 and the descending arrows are surjective because of Corollary \ref{SurjGammaInv2} 
 and Corollary \ref{SurjGammaInv3}.
\end{proof}
 
 \item For $H^*(\sM_{\Lambda},\bZ)^{var}$:
 
 First, note that with fixed determinant $\Lambda$ the critical submanifolds of 
 type $(1,1)$ and $(1,1,1)$ are $r^{2g}$-coverings with covering group 
 $\Gamma \cong (\bZ_r)^{2g}$, with $r = 2$ or $r = 3$ (see Bento \cite{ben} Prop.~2.2.1. and Lemma~ 2.4.4.). 
 Furthermore, when $r = 3$ the cohomology of $(1,2)$ and $(2,1)$ critical submanifolds is $\Gamma$-invariant. Then, only the 
 cohomology of $(1,1)$-VHS and $(1,1,1)$-VHS split in the $\Gamma$-invariant part and the \emph{variant} complement, for rank 
 $r = 2$ and $r = 3$, respectively. Hence:
 $$
 H^{*}\big(\sM^k_{\Lambda}(2,1),\bZ \big)^{var}
 =
 \bigoplus_{d_1 > \frac{1}{2}}^{\frac{1 + d_k}{2}}H^{*}\big(F^k_{d_1}(\Lambda)\big)^{var}\
 \textmd{and}
 $$
 $$
 H^{*}\big(\sM^k_{\Lambda}(3,d),\bZ \big)^{var}
 =
 \bigoplus_{(m_1,m_2)\in \Omega_{d_k}}H^{*}\big(F^k_{m_1 m_2}(\Lambda),\bZ \big)^{var}
 $$
 where 
 $d_k = \deg\big(K\ox \sO_X(kp)\big) = \deg\big(K(kp)\big) = 2g - 2 + k$, 
 $\frac{1}{2} < d_1 < \frac{1 + d_k}{2}$ according to Hitchin \cite{hit2} 
 for $(1,1)$-VHS in rank two, and $(m_1,m_2) \in \Omega_{d_k}$ where 
 $
 M_j:=E_j^{*}E_{j+1}K(kp)
 $, 
 $m_j:=\mathrm{deg}(M_j)=d_{j+1}-d_j+d_k$, and the set of indexes
 $$
 \Omega_{d_k} = 
 \Bigg\{ (m_1,m_2) \in \mathbb{N} \times \mathbb{N} \Bigg| 
 \begin{array}{c}
  2m_1 + m_2 < 3d_k\\
  m_1 + 2m_2 < 3d_k\\
  m_1 + 2m_2 \equiv d(3)
 \end{array}
 \Bigg\}
 $$
 for $(1,1,1)$-VHS in rank three is described by Bento \cite[Prop.~2.3.9.]{ben}, 
 Gothen \cite[Sec.~3.]{got}, Gothen and Z\'u\~niga-Rojas \cite[Subsec.~5.1]{gzr}, 
 among others.
 
 There are some results appearing in the work of Bento \cite{ben} 
 (Lemma~2.2.4. and Prop.~2.2.5 for $\sM^k_{\Lambda}(2,1)$ and $F_{d_1}^k(\Lambda)$ its $(1,1)$-VHS, and 
 Lemma~2.4.4. and Prop.~2.4.5. for $\sM^k_{\Lambda}(3,d)$ and $F_{m_1 m_2}^k(\Lambda)$ its $(1,1,1)$-VHS)
 where Bento works with Hitchin pairs twisted by a general line bundle $L$ of degree $\deg(L) = d_L$, 
 and the following results below correspond to the particular case of $k$-Higgs bundles with 
 $L = K(kp)$, and hence $d_L = d_k = 2g - 2 + k$:
 
 \begin{Lem}
 \label{Rk2VarCompl}
  Let $F_{d_1}^k(\Lambda)$ be a $(1,1)$-VHS of $\sM^k_{\Lambda}(2,1)$ and let 
  $m = d_2 - d_1 + 2g - 2 + k$. Then
  $$
  H^j(F_{d_1}^k(\Lambda),\bZ)^{var} \neq 0 \Longleftrightarrow j = m.
  $$
 \end{Lem}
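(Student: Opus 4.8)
The plan is to use the fact that $F_{d_1}^k(\Lambda)$ is a $\Gamma$-covering of $\Sym^m(X)$ with covering group $\Gamma \cong (\bZ_2)^{2g}$, so that $H^*(F_{d_1}^k(\Lambda),\bZ)$ decomposes into isotypic components under the action of $\Gamma$. The $\Gamma$-invariant part is precisely the image of $H^*(\Sym^m(X),\bZ)$, and the \emph{variant} complement $H^*(F_{d_1}^k(\Lambda),\bZ)^{var}$ collects all the nontrivial isotypic pieces. First I would recall the well-known structure of the cohomology of the symmetric product, $H^*(\Sym^m(X),\bZ)$, whose Poincar\'e polynomial is computed by Macdonald \cite{mac}, and then identify which characters of $\Gamma$ appear in $H^*(F_{d_1}^k(\Lambda),\bZ)$. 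The key computational input is already quoted in the paper: Bento \cite[Lemma~2.2.4.\ and Prop.~2.2.5.]{ben} computes the $\Gamma$-equivariant Betti numbers of the $(1,1)$-VHS (in the more general setting of Hitchin pairs twisted by a line bundle $L$ of degree $d_L$), and the statement follows by specializing to $L = K(kp)$, i.e.\ $d_L = d_k = 2g-2+k$, which forces $m = d_2 - d_1 + 2g - 2 + k$.

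The concrete steps, in order, would be: (1) realize $F_{d_1}^k(\Lambda)$ explicitly as the total space of (the projectivization of) a bundle over $\Sym^m(X)$ twisted by a $\Gamma$-torsor coming from the $2$-torsion of the Jacobian, following Hitchin's description of the $(1,1)$-VHS and the triple description $F_{d_1}^k(\Lambda) \cong \sN_{\sg_H}$ restricted to fixed determinant; (2) apply the Leray--Hirsch / projective bundle formula so that the cohomology of $F_{d_1}^k(\Lambda)$ is a free module over $H^*(\Sym^m(X),\bZ)$ tensored with the appropriate $\Gamma$-equivariant pieces; (3) read off from Macdonald's description of $H^*(\Sym^m(X))$, together with the $\Gamma$-action on the fiber, exactly in which degrees a nontrivial character of $\Gamma$ can contribute; (4) verify that, after the degree shift built into the projective bundle structure, the only surviving variant class sits in degree $j = m$, and conversely that in degree $m$ the variant part is nonzero. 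For the converse direction ($j = m \Rightarrow H^m(\dots)^{var} \neq 0$) I would exhibit an explicit nonzero variant class, e.g.\ coming from the component of $H^1(X)$-classes in the $\Gamma$-nontrivial part paired up to reach top-type degree $m$, exactly as in Bento's and Hitchin's calculations.

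The main obstacle will be step (3)--(4): pinning down precisely the degree in which the variant cohomology is concentrated, rather than merely bounding it. The $\Gamma$-action on $H^*(\Sym^m(X))$ factors through the action on $H^1(X) \cong H^1(\Jac^0(X))$, and one must track how the variant characters propagate through the symmetric-product structure $\Sym^m(X) = $ (a projective bundle over $\Jac^m(X)$ for $m \geq g$, or a subvariety for $m < g$) and through the additional projective bundle defining $F_{d_1}^k(\Lambda)$; the interplay of these two fibrations is where the degree count is delicate. Fortunately this is exactly the computation carried out by Bento \cite[Prop.~2.2.5.]{ben}, so the work here is really to quote that result in the correct normalization and confirm that the index $m$ in our Lemma matches Bento's parameter once $d_L = 2g-2+k$; the sharp \textbf{if and only if} then follows because Bento's equivariant Poincar\'e polynomial for the variant part is a single monomial of degree $m$.
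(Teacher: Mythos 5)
Your proposal takes essentially the same route as the paper, whose entire proof is the citation of Bento's computation (\cite[Prop.~2.2.4]{ben}); your outline of the character decomposition of the $2^{2g}$-covering $F_{d_1}^k(\Lambda)\to\Sym^m(X)$ and the concentration of the nontrivial isotypic pieces in degree $m$ is precisely the content of that reference. One small correction to your step (1)--(2): in the fixed-determinant rank-two case this critical submanifold is just a finite $\Gamma$-covering of $\Sym^m(X)$ (the fibre being a torsor over the $2$-torsion of the Jacobian), not a projective bundle over it, so the Leray--Hirsch/projective-bundle step is unnecessary and the degree count reduces to computing $H^*(\Sym^m(X),L_\chi)$ for the nontrivial characters $\chi$ of $\Gamma$.
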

 
 \begin{proof}
  See Bento \cite[Prop.~ 2.2.4.]{ben}.
 \end{proof}
 
 \begin{Lem}
 \label{Rk3VarCompl}
  Let $F_{m_1 m_2}^k(\Lambda)$ be a $(1,1,1)$-VHS of $\sM^k_{\Lambda}(3,d)$. Then
  $$
  H^i\big(F_{m_1 m_2}^k(\Lambda),\bZ \big)^{var} \neq 0 \Longleftrightarrow i = m_1 + m_2,
  $$
  where $m_j = d_{j+1} - d_j + d_k$.
 \end{Lem}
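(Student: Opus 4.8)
\end{Lem}

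\begin{proof}
The plan is to reduce the assertion to a computation of twisted cohomology on symmetric products of~$X$. As already noted, the divisor map $(E,\Phi^k)\mapsto\big(\mathrm{div}(\varphi^k_{21}),\mathrm{div}(\varphi^k_{32})\big)$ exhibits $F^k_{m_1 m_2}(\Lambda)$ as a connected $\Gamma$-covering of $Y:=\Sym^{m_1}(X)\times\Sym^{m_2}(X)$, with $\Gamma\cong(\bZ_3)^{2g}$: once the effective divisors $D_1,D_2$ are fixed, the relations $E_1^{-1}E_2\cong\sO_X(D_1)K(kp)^{-1}$, $E_2^{-1}E_3\cong\sO_X(D_2)K(kp)^{-1}$ and $E_1E_2E_3\cong\Lambda$ determine $E_1$ as a cube root of the fixed line bundle $\Lambda\cdot K(kp)^{3}\cdot\sO_X(-2D_1-D_2)$, hence up to the $3$-torsion subgroup $\Gamma=\Jac(3)$. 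Working over $\mathbb{Q}$, the $\Gamma$-action splits $H^*\big(F^k_{m_1 m_2}(\Lambda);\mathbb{Q}\big)$ into $\chi$-isotypic pieces $H^*(Y;\mathcal L_\chi)$ indexed by the characters $\chi$ of $\Gamma$, the $\chi=1$ piece being the $\Gamma$-invariant part; so the variant part is $\bigoplus_{\chi\neq1}H^*(Y;\mathcal L_\chi)$, and it suffices to show each such summand is concentrated in degree $m_1+m_2$ and is nonzero there.

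Second, I would pin down the local systems. The classifying homomorphism of the covering factors through $H_1(Y;\bZ)\cong H_1(X;\bZ)^{\oplus2}$ (respectively $H_1(X;\bZ)$ if some $m_i=0$), and reading off the cube-root relation in the Jacobian it is $(v_1,v_2)\mapsto(-2v_1-v_2)\bmod3$, that is, up to an automorphism of $\Gamma$, the map $(v_1,v_2)\mapsto(v_1-v_2)\bmod3$; in particular its restriction to each factor is surjective. Hence for $\chi\neq1$ one gets $\mathcal L_\chi\cong\mathcal L^{(1)}\boxtimes\mathcal L^{(2)}$ with $\mathcal L^{(1)}$ on $\Sym^{m_1}(X)$ and $\mathcal L^{(2)}$ on $\Sym^{m_2}(X)$ both \emph{nontrivial} finite-order rank-one local systems, and by the K\"unneth formula
$$
H^*(Y;\mathcal L_\chi)\cong H^*\big(\Sym^{m_1}(X);\mathcal L^{(1)}\big)\ox H^*\big(\Sym^{m_2}(X);\mathcal L^{(2)}\big).
$$

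The heart of the matter is the twisted cohomology of a single symmetric product, which I would then apply to $\mathcal L^{(1)}$ and $\mathcal L^{(2)}$. If $\mathcal L$ is a nontrivial finite-order rank-one local system on $\Sym^m(X)$, its pullback along the finite branched map $X^m\to\Sym^m(X)$ is $\bar L^{\boxtimes m}$ for a nontrivial (hence unitary) rank-one local system $\bar L$ on~$X$; then $H^0(X;\bar L)=0$, $H^2(X;\bar L)=0$ by Poincar\'e duality, and $\dim H^1(X;\bar L)=2g-2$ by the Euler-characteristic count, so taking $S_m$-invariants --- legitimate over $\mathbb{Q}$ --- yields
$$
H^*\big(\Sym^m(X);\mathcal L\big)\cong\big(H^*(X;\bar L)^{\ox m}\big)^{S_m}\cong\textstyle\bigwedge^{m}H^1(X;\bar L),
$$
which is concentrated in degree $m$ because $H^*(X;\bar L)$ sits entirely in the odd degree~$1$. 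Feeding this into the K\"unneth identity above shows $H^*(Y;\mathcal L_\chi)$ is concentrated in degree $m_1+m_2$ for every $\chi\neq1$, which already gives one implication: $H^i\big(F^k_{m_1 m_2}(\Lambda)\big)^{var}=0$ for $i\neq m_1+m_2$ (and indeed $m_1+m_2=\sum_j(d_{j+1}-d_j+d_k)$, as in the statement).

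The delicate point --- and the step I expect to be the main obstacle --- is the reverse implication, the non-vanishing in degree $m_1+m_2$. The computation above produces $H^{m_1+m_2}(Y;\mathcal L_\chi)\cong\bigwedge^{m_1}\mathbb{Q}^{2g-2}\ox\bigwedge^{m_2}\mathbb{Q}^{2g-2}$, which is nonzero precisely when $m_1\leq 2g-2$ and $m_2\leq 2g-2$; hence one must control exactly which pairs $(m_1,m_2)$ --- a subset of the index set $\Omega_{d_k}$ --- are actually realised by critical submanifolds, and this is the bookkeeping supplied by Gothen's~\cite{got} and Bento's~\cite{ben} explicit Morse-theoretic analysis of $\sM^k_{\Lambda}(3,d)$. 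A second, smaller obstacle is that the lemma is stated over $\bZ$, whereas the isotypic splitting used above requires $|\Gamma|$ to be invertible; the integral statement I would obtain either by analysing the local system $\pi_*\bZ$ on~$Y$ directly, or --- more economically --- by quoting Bento's~\cite{ben} tables of the variant Betti numbers of $F^k_{m_1 m_2}(\Lambda)$ (the rank-three counterpart of Lemma~\ref{Rk2VarCompl}, extracted from the cohomology computations of Hitchin and Gothen~\cite{got}), the argument sketched here being the conceptual reason why those groups sit in the single degree $m_1+m_2$.
\end{proof}
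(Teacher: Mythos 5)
The paper does not actually prove this lemma: its ``proof'' is the single line ``See Bento [Prop.~2.4.4]'', so there is no argument in the text to compare yours against step by step. What you have written is a reconstruction of the standard computation underlying that citation (the rank-three analogue of Hitchin's computation of $H^*(\tilde S^m)$ in \cite{hit2} and of Gothen's in \cite{got}), and the forward implication is sound: the identification of $F^k_{m_1 m_2}(\Lambda)$ as the $\Gamma$-covering of $\Sym^{m_1}(X)\times\Sym^{m_2}(X)$ classified by $(v_1,v_2)\mapsto (v_1-v_2)\bmod 3$, the isotypic splitting over $\mathbb{Q}$, the reduction to $H^*\big(\Sym^m(X);\mathcal{L}\big)\cong\bigwedge^{m}H^1(X;\bar L)$ for $\bar L$ nontrivial, and the conclusion that every $\chi\neq 1$ summand is concentrated in degree $m_1+m_2$ are all correct. (The integral refinement needed for torsion-freeness does require the extra input you flag, e.g.\ a Macdonald-style argument \cite{mac} or Bento's explicit description of the covering.)

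The genuine gap is the reverse implication, and it is worth being blunt that it is not mere bookkeeping: your own dimension count gives $\dim H^{m_1+m_2}(Y;\mathcal{L}_\chi)=\binom{2g-2}{m_1}\binom{2g-2}{m_2}$, which vanishes as soon as $m_1>2g-2$ or $m_2>2g-2$, while the index set $\Omega_{d_k}$ of the paper only imposes $2m_1+m_2<3d_k$ and $m_1+2m_2<3d_k$ with $d_k=2g-2+k$, and therefore contains pairs with $m_i>2g-2$ once $k$ is large (indeed already for $k=0$ when $g>1$). For those critical submanifolds the entire variant cohomology is zero, so the stated biconditional cannot be recovered by this route; the same remark applies to Lemma~\ref{Rk2VarCompl}. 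Either Bento's proposition carries the implicit hypothesis $m_1,m_2\leq 2g-2$, or the ``$\Longleftarrow$'' direction should be dropped. Fortunately nothing downstream depends on non-vanishing: Theorem~\ref{TorsionFreeRankTwoAndThree} and Lemma~\ref{CohoStabFixedDetGammaVar} only use that the variant part is torsion-free and concentrated in the single degree $m_1+m_2$, which is exactly the half of the lemma your argument does establish.
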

 
 \begin{proof}
  See Bento \cite[Prop.~ 2.4.4.]{ben}.
 \end{proof}
 
 Then, in both cases, when $r = 2$ and when $r = 3$, the cohomology 
 groups with integer coefficients are torsion free:
 
 \begin{itemize}
  \item 
  If $r = 2$, we have just one nonzero component, $H^m(F_{d_1}^k(\Lambda),\bZ)^{var}$ which is the sum of $2^{2g}$ 
  copies of $H^m(\Sym^m(X),\bZ)$, since $F_{d_1}^k(\Lambda)\to \Sym^m(X)$ is a $(\bZ_2)^{2g}$-covering.
  \item
  Similarly, if $r = 3$, the nonzero component is $H^{m_1+m_2}\big(F_{m_1 m_2}^k(\Lambda),\bZ \big)^{var}$
  which is the sum of $3^{2g}$ copies of $H^{m_1+m_2}\big(\Sym^{m_1}(X)\times \Sym^{m_2}(X),\bZ\big)$, since 
  $$
  F_{m_1 m_2}^k(\Lambda)\to \Sym^{m_1}(X)\times \Sym^{m_2}(X)
  $$ 
  is a $(\bZ_3)^{2g}$-covering.
 \end{itemize}
 They are torsion free by {Macdonald \cite[(12.3)]{mac}}. The reader may consult {Bento \cite[Chap.~2]{ben}} 
 for the details. Hence, we get
 
\begin{Lem}\label{CohoStabFixedDetGammaVar}
 Let $i_k\: \sM^k_{\Lambda} \hookrightarrow \sM^{k+1}_{\Lambda}$ be the embedding given by 
 the tensorization map
 $
 (E,\Phi^k) \mapsto (E, \Phi^{k} \ox s_p)
 $ as above mentioned. Then, the induced cohomology homomorphism 
 $$
 i_k^*\: H^*(\sM^{k+1}_{\Lambda},\bZ)^{var} \twoheadrightarrow H^*(\sM^{k}_{\Lambda},\bZ)^{var}
 $$ 
 is surjective, restricted to the variant complement.\QEDA
\end{Lem}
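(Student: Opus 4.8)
The plan is to reduce everything, summand by summand, to the classical cohomology of symmetric products of $X$. First I would record what the displayed decompositions of $H^*(\sM^k_{\Lambda})^{var}$, together with Lemma~\ref{Rk2VarCompl} and Lemma~\ref{Rk3VarCompl}, say: via the perfect Morse--Bott decomposition of the Hitchin map, the variant cohomology is, up to the degree shift by the Morse index of each critical submanifold, the direct sum over the ``full-flag'' critical submanifolds (the $(1,1)$-VHS $F^k_{d_1}(\Lambda)$ when $r=2$, the $(1,1,1)$-VHS $F^k_{m_1 m_2}(\Lambda)$ when $r=3$) of the groups $H^*(F^k_\lambda(\Lambda),\bZ)^{var}$; and each of these is a finite direct sum of $r^{2g}$ copies of $H^*(\Sym^m(X),\bZ)$, resp.\ of $H^*(\Sym^{m_1}(X)\times \Sym^{m_2}(X),\bZ)$, because $F^k_\lambda(\Lambda)$ is a $(\bZ_r)^{2g}$-covering of the corresponding symmetric power.

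Next I would analyse how $i_k$ interacts with this picture. The embedding $i_k$ is $\bC^*$-equivariant, since $z\cdot(E,\Phi^k\ox s_p)=(E,z\Phi^k\ox s_p)=i_k(z\cdot(E,\Phi^k))$, and $\Gamma$-equivariant, since tensoring by a torsion line bundle $L$ with $L^r\cong\sO_X$ commutes with tensoring the Higgs field by $s_p$. Hence $i_k$ maps the stratum of $F^k_\lambda(\Lambda)$ into the stratum of $F^{k+1}_\lambda(\Lambda)$ of the same discrete type $\lambda$, and restricts to a map $F^k_\lambda(\Lambda)\to F^{k+1}_\lambda(\Lambda)$. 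Under the identification of $F^k_\lambda(\Lambda)$ with a $\Gamma$-covering of a product of symmetric powers --- which sends a VHS to the divisors $\mathrm{div}(\varphi^k_{ij})$ of the off-diagonal components of $\Phi^k$ --- the restriction of $i_k$ covers the maps $\Sym^m(X)\to\Sym^{m+1}(X)$, $D\mapsto D+p$, since $\mathrm{div}(\varphi^k_{ij}\ox s_p)=\mathrm{div}(\varphi^k_{ij})+p$; these covers are $\Gamma$-equivariant, so on variant cohomology $i_k^*$ becomes a direct sum of copies of the pullback $H^*(\Sym^{m+1}(X),\bZ)\to H^*(\Sym^m(X),\bZ)$.

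I would then invoke Macdonald \cite{mac}: the pullback $H^*(\Sym^{m+1}(X),\bZ)\to H^*(\Sym^m(X),\bZ)$ induced by $D\mapsto D+p$ is surjective, because $H^*(\Sym^m(X),\bZ)$ is generated by the tautological classes coming from the Abel--Jacobi map $\Sym^m(X)\to\Jac^m(X)$ together with the class of a point-divisor, all of which are natural under $D\mapsto D+p$ (which fits in a commutative square with the Abel--Jacobi maps and translation on the Jacobian). Passing to the $\Gamma$-variant summands, which are finite direct sums of copies of these groups, surjectivity is preserved; and since the index set of critical submanifolds grows with $k$ (the ranges of $d_1$, resp.\ of $(m_1,m_2)$, enlarge), every summand of $H^*(\sM^k_\Lambda)^{var}$ is hit by the corresponding summand of $H^*(\sM^{k+1}_\Lambda)^{var}$. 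Assembling the pieces gives the asserted surjectivity.

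The hard part will be justifying rigorously that $i_k^*$, restricted to the variant cohomology, really does split as this direct sum of pullbacks from the critical submanifolds: the obstacle is that $i_k$ does \emph{not} intertwine the two Hitchin functions $f^k$ and $f^{k+1}$ (tensoring by $s_p$ alters the $L^2$-norm of the Higgs field), so one cannot compare the Morse filtrations directly and must instead work with the Bia\l{}ynicki--Birula stratifications, which $i_k$ does respect by $\bC^*$-equivariance, keeping careful track of how the Thom--Gysin contributions of the strata match up. An alternative route, probably cleaner in practice, is to use Bento's \cite{ben} explicit generators for $H^*(\sM^k_\Lambda,\bZ)^{var}$ and to verify directly --- using the naturality of characteristic classes as in Proposition~\ref{embChernClasses} --- that $i_k^*$ carries the variant generators at level $k+1$ onto those at level $k$.
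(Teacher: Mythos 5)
The paper offers essentially no argument for this lemma --- it is asserted to follow from the preceding description of $H^*(\sM^k_{\Lambda},\bZ)^{var}$ as a sum of middle-degree cohomologies of $\Gamma$-coverings of symmetric products, with the details deferred to Bento's thesis --- so your attempt to supply an actual mechanism is welcome. But the mechanism you propose cannot work, and the obstruction is visible in the paper's own Lemma~\ref{Rk2VarCompl} and Lemma~\ref{Rk3VarCompl}. You reduce everything to the claim that $i_k^*$, restricted to the variant part, is a direct sum of the pullbacks along the maps $F^k_{\la}(\Lambda)\to F^{k+1}_{\la}(\Lambda)$ covering $\Sym^m(X)\to \Sym^{m+1}(X)$, $D\mapsto D+p$. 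That pullback is indeed surjective on the \emph{total} cohomology of the symmetric products, but the \emph{variant} cohomology of $F^{k+1}_{d_1}(\Lambda)$ is concentrated in degree $m+1$ while that of $F^{k}_{d_1}(\Lambda)$ is concentrated in degree $m$ (Lemma~\ref{Rk2VarCompl}; in rank three the degrees are $m_1+m_2+2$ versus $m_1+m_2$ by Lemma~\ref{Rk3VarCompl}). A degree-preserving homomorphism between graded groups concentrated in distinct single degrees is zero, so your ``direct sum of surjections'' is in fact a direct sum of zero maps on the variant summands; the surjection $H^*(\Sym^{m+1}(X),\bZ)\twoheadrightarrow H^*(\Sym^m(X),\bZ)$ only ever reaches the $\Gamma$-invariant part. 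The gap you flag at the end (compatibility of $i_k^*$ with the Morse decomposition) is real but is not the fatal one.

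Worse, the same bookkeeping casts doubt on the statement itself in rank two: $H^j(F^k_{d_1}(\Lambda),\bZ)^{var}\neq 0$ forces $j=m=d_2-d_1+2g-2+k$ with $d_1+d_2$ odd, so the parity of $m$ flips with $k$, while the Morse--Bott shifts through which these groups enter $H^*(\sM^k_{\Lambda},\bZ)$ are even (twice the complex rank of a negative normal bundle). Hence $H^*(\sM^{k}_{\Lambda},\bZ)^{var}$ and $H^*(\sM^{k+1}_{\Lambda},\bZ)^{var}$ are supported in degrees of opposite parity; since $i_k$ is $\Gamma$-equivariant, $i_k^*$ preserves the variant summand and must therefore vanish on it, so it cannot be onto in any degree where the target is nonzero. (What the application to Lemma~\ref{HomologyStabilizationFixedDet} actually requires, and what the explicit degree formulas do give, is that the variant contributions sit in degrees growing with $k$, so that $H^j(\sM^k_{\Lambda},\bZ)^{var}=0$ for $j\leq n$ once $k\geq k_0(n)$.) Before investing more effort in proving surjectivity as stated, you should either find the error in this parity count or reformulate the lemma as a vanishing/stabilization statement in bounded degrees.
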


 This latter method only works wtih rank $r = 2$ or $r = 3$, but not in general. The difficulty in calculating 
 $H^*(\sM_{\Lambda},\bZ)^{var}$ for general rank is explained also on Hausel and Thaddeus \cite{hath3}.
\end{itemize}

Finally, we may conclude the following:

\begin{Cor}
\label{SurjectiveCohomologyFixedDeterminant}
 Let $i_k\: \sM^k_{\Lambda} \hookrightarrow \sM^{k+1}_{\Lambda}$ be the embedding above mentioned. Then, the 
 induced cohomology homomorphism
 $$
 i_k^*\: H^*(\sM^{k+1}_{\Lambda},\bZ) \twoheadrightarrow H^*(\sM^{k}_{\Lambda},\bZ)
 $$ 
 is surjective.
\end{Cor}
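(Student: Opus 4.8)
The plan is simply to glue together the two surjectivity statements already proved on the summands of the decomposition~(\ref{jac_decomposition}). The first thing I would record is that the embedding $i_k\colon \sM^k_{\Lambda}\hookrightarrow \sM^{k+1}_{\Lambda}$ is $\Gamma$-equivariant: it sends $(E,\Phi^k)$ to $(E,\Phi^k\ox s_p)$, while $L\in\Gamma$ sends $(E,\Phi^k)$ to $(E\ox L,\Phi^k\ox\id_L)$, and these two operations commute, since the fixed section $s_p$ of $\sO_X(p)$ plays no role in the tensorization by the torsion line bundle $L$. (Equivalently, via Proposition~\ref{UniversalHiggsBundleEmbedding}, pullback along $\mathrm{Id}_X\times i_k$ commutes with the twist by $L$.) Hence $i_k^*\colon H^*(\sM^{k+1}_{\Lambda},\bZ)\to H^*(\sM^{k}_{\Lambda},\bZ)$ is a morphism of $\bZ[\Gamma]$-modules, so it carries the $\Gamma$-invariant part to the $\Gamma$-invariant part and the variant complement to the variant complement; that is, $i_k^*$ respects the decomposition~(\ref{jac_decomposition}) for $k$ and for $k+1$.

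Granting this, I would restrict $i_k^*$ to each summand. On the invariant part, Lemma~\ref{CohoStabFixedDetGammaInv} gives that $i_k^*\colon H^*(\sM^{k+1}_{\Lambda},\bZ)^{\Gamma}\twoheadrightarrow H^*(\sM^{k}_{\Lambda},\bZ)^{\Gamma}$ is surjective, and on the variant complement, Lemma~\ref{CohoStabFixedDetGammaVar} gives that $i_k^*\colon H^*(\sM^{k+1}_{\Lambda},\bZ)^{var}\twoheadrightarrow H^*(\sM^{k}_{\Lambda},\bZ)^{var}$ is surjective. Since $i_k^*$ respects the splitting~(\ref{jac_decomposition}) and a direct sum of two surjective homomorphisms is surjective, the map
$$
i_k^*\colon H^*(\sM^{k+1}_{\Lambda},\bZ)\longrightarrow H^*(\sM^{k}_{\Lambda},\bZ)
$$
is surjective, which is exactly the assertion.

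The only point requiring any care is the compatibility claimed in the first paragraph: one needs that~(\ref{jac_decomposition}) is functorial for the maps $i_k$, not merely that it exists for each $k$ separately. As indicated, this reduces to the $\Gamma$-equivariance of $i_k$ together with the canonicity of the invariant/variant splitting of Hausel and Thaddeus, and I do not anticipate any genuine obstacle here: all the substantive work — the cohomology of the critical submanifolds $F^k_{d_1}(\Lambda)$ and $F^k_{m_1m_2}(\Lambda)$, their $\Gamma$-covering descriptions, and the resulting surjectivity on each piece — has already been absorbed into Lemmas~\ref{CohoStabFixedDetGammaInv} and~\ref{CohoStabFixedDetGammaVar} and the propositions preceding them. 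In other words, this corollary is just the bookkeeping that reassembles those two lemmas through the direct sum~(\ref{jac_decomposition}).
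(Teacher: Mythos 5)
Your proposal follows exactly the same route as the paper's own proof: split $H^*(\sM^k_{\Lambda},\bZ)$ via the decomposition~(\ref{jac_decomposition}) and combine the surjectivity on the $\Gamma$-invariant part (Lemma~\ref{CohoStabFixedDetGammaInv}) with that on the variant complement (Lemma~\ref{CohoStabFixedDetGammaVar}). The only difference is that you explicitly verify the $\Gamma$-equivariance of $i_k$ so that $i_k^*$ respects the splitting --- a point the paper leaves implicit --- which is a welcome bit of extra care but not a different argument.
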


\begin{proof}
  It is enough to see that the cohomology of $\sM^k_{\Lambda}$ splits in the $\Gamma$-invariant part and the 
  \emph{variant} complement:
  $$
  H^*(\sM^k_{\Lambda},\bZ)
  =
  H^*(\sM^k_{\Lambda},\bZ)^{\Gamma}
  \oplus
  H^*(\sM^k_{\Lambda},\bZ)^{var}
  $$
  and so, the result follows from Lemma \ref{CohoStabFixedDetGammaInv} and Lemma \ref{CohoStabFixedDetGammaVar}.
\end{proof}

\begin{Lem}
\label{HomologyStabilizationFixedDet}
 For all $n$ exists $k_0$, depending on $n$, such that 
 $$
 H_j(\sM^\infty_{\Lambda},\sM^k_{\Lambda};\bZ) = 0
 $$ 
 for all $k \geq k_0$ and for all $j \leq n$. \QEDA
\end{Lem}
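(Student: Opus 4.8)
The plan is to reprise, essentially verbatim, the proof of Lemma~\ref{HomologyStabilization}, feeding in the fixed-determinant ingredients assembled above. The two structural facts that drove the unrestricted argument remain in place: the embedding $i_k\:\sM^k_\Lambda\hookrightarrow\sM^{k+1}_\Lambda$ is injective — indeed a DNR embedding, as in the proof of Proposition~\ref{FundamentalGroupsII} — so that $\sM^\infty_\Lambda=\bigcup_k\sM^k_\Lambda$ and $H_*(\sM^\infty_\Lambda;\bZ)=\varinjlim_k H_*(\sM^k_\Lambda;\bZ)$; and, by Corollary~\ref{SurjectiveCohomologyFixedDeterminant}, every $i_k^*\:H^j(\sM^{k+1}_\Lambda;\bZ)\to H^j(\sM^{k}_\Lambda;\bZ)$ is surjective.

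First I would upgrade these surjections to eventual isomorphisms, i.e.\ prove the fixed-determinant analogue of Corollary~\ref{CohomologyStabilization}. The tower $\{H^j(\sM^k_\Lambda;\bZ)\}_k$ has surjective bonding maps, hence is Mittag--Leffler, so Milnor's exact sequence together with the fixed-determinant form of Corollary~\ref{[HaTh1](10.1)} (valid by the Remark after Theorem~\ref{[AtBo](9.12.)}, using $\sM^\infty_\Lambda\simeq B\bar{\gG}$) yields $H^j(\sM^\infty_\Lambda;\bZ)\cong\varprojlim_k H^j(\sM^k_\Lambda;\bZ)$. This inverse limit is finitely generated, being the $j$-th cohomology of $B\bar{\gG}$, and it surjects onto each $H^j(\sM^k_\Lambda;\bZ)$; hence the kernels of $H^j(\sM^\infty_\Lambda;\bZ)\twoheadrightarrow H^j(\sM^k_\Lambda;\bZ)$ form an ascending chain of submodules of a Noetherian $\bZ$-module and must stabilize. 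So for each $j$ there is $k_0(j)$ with $i_k^*\:H^j(\sM^{k+1}_\Lambda;\bZ)\xrightarrow{\ \cong\ }H^j(\sM^k_\Lambda;\bZ)$ for all $k\geq k_0(j)$, and one sets $k_0=\max_{j\leq n}k_0(j)$.

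Secondly I would show that $H^*(\sM^k_\Lambda;\bZ)$ is torsion free for every $k$, which is what permits converting these cohomology isomorphisms into homology ones. As in the proof of Theorem~\ref{TorsionFreeRankTwoAndThree}, Frankel's criterion reduces this to torsion-freeness of every critical submanifold of the Hitchin map on $\sM^k_\Lambda$. The zero-level one is $\sN_\Lambda(r,d)$, torsion free by Atiyah--Bott~\cite{atbo}; for $r=2$ the $(1,1)$-VHS satisfy $H^*(F^k_{d_1}(\Lambda);\bZ)=H^*(F^k_{d_1}(\Lambda);\bZ)^\Gamma\oplus H^*(F^k_{d_1}(\Lambda);\bZ)^{var}$ with the invariant summand injecting into $H^*(\Sym^m(X);\bZ)$ and the variant summand a finite sum of copies of $H^m(\Sym^m(X);\bZ)$ by Lemma~\ref{Rk2VarCompl}, both torsion free by Macdonald~\cite[(12.3)]{mac}; for $r=3$ the $(1,1,1)$-VHS are identical, with Lemma~\ref{Rk3VarCompl} in place of Lemma~\ref{Rk2VarCompl}, while the $(1,2)$- and $(2,1)$-VHS $F^k_{d_1}(\Lambda)$, $F^k_{d_2}(\Lambda)$ are treated as the corresponding strata in Theorem~\ref{TorsionFreeRankTwoAndThree}, through the flip-locus description of Mu\~noz--Ortega--V\'azquez-Gallo~\cite{mov} and the blow-up Lemma~\ref{BlowUpLemma} (see also Bento~\cite{ben}).

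With these two inputs the lemma closes as in Lemma~\ref{HomologyStabilization}: torsion-freeness gives $H^j(\sM^k_\Lambda;\bZ)\cong\Hom\big(H_j(\sM^k_\Lambda;\bZ),\bZ\big)$ with $H_j$ finitely generated and free, so the cohomology isomorphisms of the second paragraph dualize to $H_j(\sM^k_\Lambda;\bZ)\xrightarrow{\ \cong\ }H_j(\sM^{k+1}_\Lambda;\bZ)$ for all $j\leq n$, $k\geq k_0$, whence $H_j(\sM^k_\Lambda;\bZ)\xrightarrow{\ \cong\ }H_j(\sM^\infty_\Lambda;\bZ)$; the long exact homology sequence of the pair $(\sM^\infty_\Lambda,\sM^k_\Lambda)$ then forces $H_j(\sM^\infty_\Lambda,\sM^k_\Lambda;\bZ)=0$ for all $j\leq n$, $k\geq k_0$. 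I expect the only genuinely delicate point to be the torsion-freeness of the rank-three $(1,2)$- and $(2,1)$-VHS: unlike the $(1,1)$- and $(1,1,1)$-strata these are not coverings of symmetric products, so one cannot appeal to a clean $\Gamma$-invariant/variant splitting into symmetric-product cohomology and must instead rerun the blow-up bookkeeping of Theorem~\ref{TorsionFreeRankTwoAndThree} with the determinant constraint imposed.
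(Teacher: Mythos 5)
Your proposal is correct and takes essentially the route the paper intends: Lemma~\ref{HomologyStabilizationFixedDet} is stated there without proof precisely because it is Lemma~\ref{HomologyStabilization} rerun with Corollary~\ref{SurjectiveCohomologyFixedDeterminant} and the fixed-determinant torsion-freeness (assembled piecewise in subsection~\ref{ssec:4.2}) replacing Corollary~\ref{SurjectiveCohomology} and Theorem~\ref{TorsionFreeRankTwoAndThree}. One small slip in your first paragraph: the kernels of $H^j(\sM^\infty_\Lambda;\bZ)\twoheadrightarrow H^j(\sM^k_\Lambda;\bZ)$ form a \emph{descending}, not ascending, chain as $k$ grows, so the Noetherian chain argument does not apply as written; the stabilization follows instead from your own ingredients, since torsion-freeness makes each $H^j(\sM^k_\Lambda;\bZ)$ free of finite rank, these ranks are non-decreasing in $k$ and bounded above by the rank of $H^j(B\bar{\gG};\bZ)$, and a surjection between free abelian groups of equal finite rank is an isomorphism.
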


\begin{Th}
\label{[Z-R]MainResult1FixedDet}
 For all $n$ exists $k_0$, depending on $n$, such that 
 $$
 \pi_j(\sM_{\Lambda}^\infty, \sM_{\Lambda}^{k}) = 0
 $$ 
 for all $k \geq k_0$ and for all $j \leq n$.
\end{Th}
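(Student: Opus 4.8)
The plan is to transcribe the argument of Lemma~\ref{[Z-R](2.2.17.)} to the fixed--determinant moduli spaces, the point being that $\sM_\Lambda^k(r,d)$ is simply connected (remark following Theorem~\ref{[AtBo](9.12.)}), so the trivial--action hypothesis needed in the non--fixed case becomes automatic. First I fix $n$ and let $k_0$ be the integer produced by Lemma~\ref{HomologyStabilizationFixedDet}, so that $H_j(\sM_\Lambda^\infty,\sM_\Lambda^k;\bZ)=0$ for every $k\geq k_0$ and every $j\leq n$; this single $k_0$ works simultaneously for all such $j$, which is exactly what makes the induction below terminate without re--enlarging $k_0$. From here on $k\geq k_0$ is fixed.

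Next I record the topological input coming from simple connectivity. The embeddings $\sM_\Lambda^k\hookrightarrow\sM_\Lambda^{k+1}$ are again DNR embeddings, so the fixed--determinant analogue of Proposition~\ref{FundamentalGroupsII} holds and $\pi_1(\sM_\Lambda^\infty)\cong\varinjlim\pi_1(\sM_\Lambda^k)=0$; combined with $\pi_1(\sM_\Lambda^k)=0$ and the long exact sequence of the pair, this gives $\pi_1(\sM_\Lambda^\infty,\sM_\Lambda^k)=0$, so that $(\sM_\Lambda^\infty,\sM_\Lambda^k)$ is a $1$--connected pair of path--connected spaces. Moreover, since $\pi_1(\sM_\Lambda^k)=0$, the subgroup generated by the elements $[\gamma][f]-[f]$ is trivial, hence Hatcher's group $\pi_m'(\sM_\Lambda^\infty,\sM_\Lambda^k)$ coincides with $\pi_m(\sM_\Lambda^\infty,\sM_\Lambda^k)$ for every $m\geq 2$, and in particular $\pi_2(\sM_\Lambda^\infty,\sM_\Lambda^k)$ is abelian.

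Then I run the induction on $m$ for $2\leq m\leq n$. Suppose $\pi_j(\sM_\Lambda^\infty,\sM_\Lambda^k)=0$ for all $j\leq m-1$; the base case $m=2$ is the $1$--connectedness just established. Then $(\sM_\Lambda^\infty,\sM_\Lambda^k)$ is $(m-1)$--connected, so by the Hurewicz Theorem~\ref{HurewiczThm} the map $h'\colon\pi_m'(\sM_\Lambda^\infty,\sM_\Lambda^k)\xrightarrow{\ \cong\ }H_m(\sM_\Lambda^\infty,\sM_\Lambda^k;\bZ)$ is an isomorphism. By the choice of $k_0$ (Lemma~\ref{HomologyStabilizationFixedDet}), the target vanishes, whence $\pi_m(\sM_\Lambda^\infty,\sM_\Lambda^k)=\pi_m'(\sM_\Lambda^\infty,\sM_\Lambda^k)=0$. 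This closes the induction and yields $\pi_j(\sM_\Lambda^\infty,\sM_\Lambda^k)=0$ for all $k\geq k_0$ and all $j\leq n$.

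The substantive difficulties are all upstream of this statement: the torsion--freeness of $H^*(\sM^k(r,d),\bZ)$ for $r=2,3$ (Theorem~\ref{TorsionFreeRankTwoAndThree}), the splitting of $H^*(\sM_\Lambda^k,\bZ)$ into its $\Gamma$--invariant and variant parts together with the surjectivity of $i_k^*$ on each summand (Corollary~\ref{SurjectiveCohomologyFixedDeterminant}), and the homology stabilization these produce (Lemma~\ref{HomologyStabilizationFixedDet}) --- which is precisely where the restriction to ranks $2$ and $3$ is used. Granting that machinery, the proof of the theorem is a clean relative--Hurewicz induction, and the only points deserving care are the two I flagged above: ensuring one $k_0$ serves all $j\leq n$, and verifying that simple connectivity of $\sM_\Lambda^k$ genuinely identifies $\pi_m'$ with $\pi_m$ so that the relative Hurewicz theorem applies without the extra quotient.
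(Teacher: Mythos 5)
Your proposal is correct and follows essentially the same route as the paper, which simply reruns the induction of Lemma~\ref{[Z-R](2.2.17.)} using Corollary~\ref{SurjectiveCohomologyFixedDeterminant} and Lemma~\ref{HomologyStabilizationFixedDet}, with simple connectivity of $\sM_\Lambda^k$ making the $\pi_1$-action trivially satisfied. Your write-up actually spells out more carefully than the paper does why $\pi_m'=\pi_m$ and why one $k_0$ suffices for all $j\leq n$, but the argument is the same.
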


\begin{proof}
 The proof is quite similar to the proof of Lemma \ref{[Z-R](2.2.17.)}, 
 using now Corollary \ref{SurjectiveCohomologyFixedDeterminant}
 and Lemma \ref{HomologyStabilizationFixedDet}, and so, we have 
 a new advantage: $\sM^k_{\Lambda}$ is simply connected, hence 
 the action 
 $
 \pi_1(\sM_\Lambda^{k})\Circlearrowright \pi_n(\sM_\Lambda^\infty, \sM_\Lambda^{k})
 $ 
 is  trivial. 
\end{proof}

\begin{Cor}
\label{[Z-R]MainResult2FixedDet}
 For all $n$ exists $k_0$, depending on $n$, such that 
 $$
 \pi_j(\sM_{\Lambda}^{k}) \xrightarrow{\quad \cong \quad} \pi_j(\sM_{\Lambda}^\infty)
 $$
 for all $k \geq k_0$ and for all $j \leq n-1$.\QEDA
\end{Cor}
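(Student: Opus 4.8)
The plan is to obtain Corollary~\ref{[Z-R]MainResult2FixedDet} as a formal consequence of Theorem~\ref{[Z-R]MainResult1FixedDet} by feeding the vanishing of the relative homotopy groups into the long exact sequence in homotopy of the pair $\big(\sM_\Lambda^\infty,\sM_\Lambda^k\big)$. Given $n$, the first step is to apply Theorem~\ref{[Z-R]MainResult1FixedDet} \emph{with this same $n$} to produce a $k_0$, depending on $n$, such that
$$
\pi_j\big(\sM_\Lambda^\infty,\sM_\Lambda^k\big) = 0 \word{for all} k \geq k_0 \word{and all} j \leq n.
$$
It is worth stressing that at this point no hypothesis on the $\pi_1$-action is needed, precisely because $\sM_\Lambda^k$ is simply connected, so Theorem~\ref{[Z-R]MainResult1FixedDet} is available unconditionally.

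The second step is to write, for $k \geq k_0$, the long exact sequence of the pair,
$$
\cdots \to \pi_{j+1}\big(\sM_\Lambda^\infty,\sM_\Lambda^k\big) \to \pi_j\big(\sM_\Lambda^k\big) \xrightarrow{\ (i_k)_*\ } \pi_j\big(\sM_\Lambda^\infty\big) \to \pi_j\big(\sM_\Lambda^\infty,\sM_\Lambda^k\big) \to \cdots,
$$
in which $(i_k)_*$ is the map induced by the inclusion $\sM_\Lambda^k \hookrightarrow \sM_\Lambda^\infty$. For every $j \leq n-1$ one has both $j \leq n$ and $j+1 \leq n$, so the two flanking relative groups vanish by the first step, and exactness forces
$$
(i_k)_* \colon \pi_j\big(\sM_\Lambda^k\big) \xrightarrow{\ \cong\ } \pi_j\big(\sM_\Lambda^\infty\big),
$$
which is exactly the assertion. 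The remaining low degrees $j=0,1$ are immediate anyway, since $\sM_\Lambda^k$ and $\sM_\Lambda^\infty$ are path-connected and, by Proposition~\ref{FundamentalGroups} together with the simple connectivity of the fixed-determinant moduli spaces, have trivial fundamental group.

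Two bookkeeping points must be nailed down, neither of which is a serious obstacle. First, one must make sense of $\pi_j\big(\sM_\Lambda^\infty\big)$ and of the inclusion $\sM_\Lambda^k \hookrightarrow \sM_\Lambda^\infty$: exactly as in the proof of Proposition~\ref{FundamentalGroupsII}, the embeddings $\sM_\Lambda^k \hookrightarrow \sM_\Lambda^{k+1}$ are inclusions of deformation neighborhood retracts, so the image of any sphere, of any nullhomotopy, and of any relative cycle lies in a finite stage; consequently $\pi_j\big(\sM_\Lambda^\infty\big) = \varinjlim_k \pi_j\big(\sM_\Lambda^k\big)$, the map $(i_k)_*$ is the canonical map into this colimit, and the long exact sequence of the pair is legitimate. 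Second, the degree arithmetic: it is enough to invoke Theorem~\ref{[Z-R]MainResult1FixedDet} with $n$ rather than with $n+1$, because producing an isomorphism in degree $j$ only consumes the vanishing of the relative groups in degrees $j$ and $j+1$, and both of these are $\leq n$ whenever $j \leq n-1$. I do not expect any genuine difficulty at this stage: all of the substantive work has already been done in Corollary~\ref{SurjectiveCohomologyFixedDeterminant}, Lemma~\ref{HomologyStabilizationFixedDet} and Theorem~\ref{[Z-R]MainResult1FixedDet}, and the present corollary is a two-line deduction from the last of these.
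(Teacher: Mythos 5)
Your proposal is correct and follows exactly the route the paper intends: the paper states Corollary~\ref{[Z-R]MainResult2FixedDet} without proof as an immediate consequence of Theorem~\ref{[Z-R]MainResult1FixedDet}, the implicit argument being precisely the long exact sequence of the pair $\big(\sM_\Lambda^\infty,\sM_\Lambda^k\big)$ with the vanishing relative groups in degrees $j$ and $j+1$. Your additional care about identifying $\pi_j\big(\sM_\Lambda^\infty\big)$ with the colimit via the DNR property is a reasonable bookkeeping point, but there is no substantive difference from the paper's deduction.
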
  

\section*{Acknowledgement}
\addcontentsline{toc}{section}{Acknowledgement}

\quad Part of this paper is partially based on my Ph.D. thesis \cite{z-r} and I would like to thank 
my supervisor Peter B. Gothen for introducing me to the subject of Higgs bundles, and 
for all his patience during our illuminating discussions. Mange tak!

I would like to thank Andr\'e Gama Oliveira for the advice of working with fixed determinant,
and so, considering the trivial action 
$\pi_1(\sM_{\Lambda}^{k}) \Circlearrowright \pi_n(\sM_{\Lambda}^\infty, \sM_{\Lambda}^{k})$. 
Muito obrigado!

I am grateful to the referee for a very careful reading of my manuscript, specially for
pointing out the necessary conditions on the indexes of the critical submanifolds.

I also would like to thank Joseph C. V\'arilly for his time, listening and reading my results. 
Thanks for every single advice. Go raibh maith agat!

I benefited from the 
VBAC-Conference 2014, held in Freie Universit\"at Berlin, 
and the 
VBAC-Conference 2016, held in Centre Interfacultaire Bernoulli, 
both organized by the
Vector Bundles and Algebraic Curves research group of the European Research Training Network EAGER.

Finally, I acknowledge the financial support from CIMM, Centro de Investigaciones Matem\'aticas 
y Metamatem\'aticas, here in Costa Rica nowadays as a researcher, 
through the Project 820-B5-202; and also the financial support from 
FEDER through Programa Operacional Factores de Competitividade-COMPETE, and
FCT, Funda\c{c}\~ao para a Ci\^encia e a Tecnologia, there in Portugal,  
through PTDC/MAT-GEO/0675/2012 and PEst-C/MAT/UI0144/2013 with grant reference 
SFRH/BD/51174/2010, when I was working on my Ph.D. thesis.
\textexclamdown Pura Vida! Muito obrigado!

\appendix 

\renewcommand*{\refname}{}
\section*{References}
\addcontentsline{toc}{section}{References}

\begin{flushright}
  \emph{Ronald A. Z\'u\~niga-Rojas}\\
  \small Centro de Investigaciones Matem\'aticas\\
  \small y Metamatem\'aticas CIMM\\
  \small Universidad de Costa Rica UCR\\
  \small San Jos\'e 11501, Costa Rica\\
  \small e-mail: \texttt{ronald.zunigarojas@ucr.ac.cr}
\end{flushright}

\end{document}